\documentclass[11pt]{amsart}

\usepackage{amsmath}
\usepackage{amsfonts}
\usepackage{amssymb}
\usepackage{amsthm}
\usepackage{graphicx}
\usepackage{hyperref}
\usepackage{pb-diagram}
\usepackage{epstopdf}
\usepackage{amscd}
\usepackage{pdfpages}
\usepackage{bbm}
\usepackage{multirow}
\usepackage[mathscr]{eucal}
\usepackage{epsfig,epsf,epic}
\usepackage{pdfsync}
\usepackage{enumerate}
\usepackage{etex}
\usepackage[all]{xy}
\usepackage{fancyref}
\usepackage{mathtools}
\usepackage{scalerel,stackengine}
\usepackage{comment}
\stackMath

\ExecuteOptions{dvips}

\addtolength{\textwidth}{+4cm} \addtolength{\textheight}{+2cm}
\hoffset-2cm \voffset-2cm \setlength{\parskip}{5pt}
\setlength{\parskip}{5pt}

\newtheorem{theorem}{Theorem}[section]

\newtheorem{corollary}[theorem]{Corollary}
\newtheorem{definition}[theorem]{Definition}
\newtheorem{example}[theorem]{Example}
\newtheorem{lemma}[theorem]{Lemma}
\newtheorem{remark}[theorem]{Remark}

\newtheorem{notation}[theorem]{Notations}
\numberwithin{equation}{section}

\DeclareMathOperator{\Hom}{Hom}

\DeclareMathOperator{\Spec}{Spec}
\DeclareMathOperator{\supp}{supp}

\DeclareMathOperator{\vol}{vol}

\newcommand{\real}{\mathbb{R}}
\newcommand{\comp}{\mathbb{C}}

\newcommand{\inte}{\mathbb{Z}}

\newcommand{\dd}[1]{\frac{\partial}{\partial #1}}

\newcommand{\half}{\frac{1}{2}}

\newcommand{\reallywidehat}[1]{%
	\savestack{\tmpbox}{\stretchto{%
			\scaleto{%
				\scalerel*[\widthof{\ensuremath{#1}}]{\kern-.6pt\bigwedge\kern-.6pt}%
				{\rule[-\textheight/2]{1ex}{\textheight}}
			}{\textheight}%
		}{0.5ex}}%
	\stackon[1pt]{#1}{\tmpbox}%
}

\newcommand{\reallywidetilde}[1]{%
	\savestack{\tmpbox}{\stretchto{%
			\scaleto{%
				\scalerel*[\widthof{\ensuremath{#1}}]{\kern-.6pt\sim\kern-.6pt}%
				{\rule[-\textheight/2]{1ex}{\textheight}}
			}{\textheight}%
		}{0.5ex}}%
	\stackon[1pt]{#1}{\tmpbox}%
}

\newcommand{\dr}[1]{\text{dR}_{#1}(M)}
\newcommand{\eqopera}{\mathcal{P}}
\newcommand{\actionmap}{\sigma}
\newcommand{\simplex}{\blacktriangle}
\newcommand{\lam}{\lambda}
\newcommand{\tr}{T}
\newcommand{\tree}[1]{\mathbb{T}_{#1}}
\newcommand{\ltree}[1]{\mathbb{L}\mathbb{T}_{#1}}
\newcommand{\mcpx}[1]{\text{CM}_{#1}}
\newcommand{\mtr}{\mathcal{T}}
\newcommand{\val}{\nu}
\newcommand{\lvertex}{L\tr^{[0]}}

\newcommand{\ftree}{\Gamma}
\newcommand{\morseprod}[1]{m_{#1}^{\text{eMorse}}}

\newcommand{\dist}{d}
\newcommand{\tdist}{\rho}
\newcommand{\g}{g}


\begin{document}

\title[Fukaya's conjecture on $S^1$-equivariant de Rham complex]{Fukaya's conjecture on $S^1$-equivariant de Rham complex}

\author[Ma]{Ziming Nikolas Ma}
\address{The Institute of Mathematical Sciences and Department of Mathematics\\ The Chinese University of Hong Kong\\ Shatin \\ Hong Kong}
\email{zmma@math.cuhk.edu.hk}
\email{nikolasming@outlook.com}

\begin{abstract}
	Getzler-Jones-Petrack \cite{getzler1991differential} introduced $A_\infty$ structures on the equivariant complex for manifold $M$ with smooth $\mathbb{S}^1$ action, motivated by geometry of loop spaces. Applying Witten's deformation by Morse functions followed by homological perturbation we obtained a new set of $A_\infty$ structures. We extend and prove Fukaya's conjecture \cite{fukaya05} relating this Witten's deformed equivariant de Rham complexes, to a new Morse theoretical $A_\infty$ complexes defined by counting gradient trees with jumping which are closely related to the $\mathbb{S}^1$ equivariant symplectic cohomology proposed by Siedel \cite{seidel2007biased}. 
	\end{abstract}

\maketitle

\section{Introduction}\label{sec:introduction}

In the influential paper \cite{witten82} by Witten, harmonic forms on a compact oriented Riemannian manifold $(M,g)$ are related to the Morse complex $CM^*_f:=\bigoplus_{p\in \text{Crit}(f)} \comp \cdot p $ on $M$ with a Morse function $f$ \footnote{Here $\text{Crit}(f)$ refers to set of critical points of $f$, and the differential $\delta$ is given by counting gradient flow lines.}. More precisely, Witten introduced the twisted Laplacian 
$
\Delta_{f,\lam} := d^*_{f,\lam} \circ d +d \circ  d^*_{f,\lam}
$ \footnote{We let $d^*_{f,\lam}$ to be the adjoint of $d$, and $G_{f,\lam}$ to be Witten's Green function of $\Delta_{f,\lam}$ w.r.t.  volume form $e^{-2\lam f} \vol_M$.} with a large real parameter $\lam $, and an isomorphism
\begin{equation}\label{eqn:witten_map_introduction}
\phi: (CM^*_f,\delta) \rightarrow (\Omega^*_{f,<1}(M),d)
\end{equation} 
where $\Omega^*_{f,<1}(M)$ refers to the small eigensubspace of $\Delta_{f,\lam}$ (see Section \ref{sec:homological_perturbation}). The detailed analysis of $\phi$ is later carried out in \cite{HelSj1, HelSj2, HelSj3, HelSj4} and readers may also see \cite{zhang} for this correspondence.

In \cite{fukaya05}, Fukaya conjectured that Witten's isomorphism \eqref{eqn:witten_map_introduction} can be enhanced to an isomorphism of $A_\infty$ algebras (or categories), a generalization of differential graded algebras (abbrev. dga), encoding rational homotopy type by work of Quillen \cite{quillen1969} and Sullivan \cite{sullivan1977}. The $A_\infty$ structures $m_{k}(\lam)$'s on $\Omega^*_{f,<1}(M)$ are obtained by pulling back the structures of the de Rham dga $(\Omega^*(M),d,\wedge)$ using the homological perturbation lemma (see e.g. \cite{kontsevich00}) with homotopy operator $H_{f,\lam} = d^*_{f,\lam} G_{f,\lam}$. The Morse $A_\infty$ structures $m_k^{Morse}$'s are defined via counting gradient flow trees of Morse functions as in \cite{fukayamorse}. Fukaya conjectured that they are related by
\begin{equation}\label{eqn:introduction_fukaya_conjecture}
\lim_{\lam \rightarrow \infty} m_{k}(\lam)  = m_k^{Morse}
\end{equation}
via the Witten's isomorphism \eqref{eqn:witten_map_introduction}. This conjectured is proven in \cite{klchan-leung-ma} by extending the analytic technique in \cite{HelSj4} to incorporate the homotopy operator $H_{f,\lam}$.

When $M$ is equipped with a smooth $\mathbb{S}^1$ action, motivated by the geometry of loop space $\mathbb{S}^1 \curvearrowright \mathscr{L}X$ for some $X$, Getzler-Jones-Petrack \cite{getzler1991differential} introduced an enhancement of the equivariant de Rham complex on $M$. They defined new $A_\infty$ algebra structures consisting of 
\begin{equation}\label{eqn:introduction_equivariant_derham_structure}
\tilde{m}_k : \big( \Omega^*(M)[[u]] \big)^{\otimes k} \rightarrow \Omega^*(M)[[u]]
\end{equation} 
by adding higher order (in $u$) operations $u \eqopera_k$'s (see Section \ref{sec:equivariant_de_rham}) to ordinary de Rham dga structures. Witten's deformed $A_\infty$ structures $m_k(\lam)$'s are constructed from $\tilde{m}_k$'s in \eqref{eqn:introduction_equivariant_derham_structure} using the technique of homological perturbation as in original Fukaya's conjecture.

Inspired by Fukaya's correspondence, we define new Morse theoretic type counting structures $\morseprod{k}$'s (where $\morseprod{1}$ is known before in \cite{berghoff2012s}) associated to $\mathbb{S}^1 \curvearrowright M$, counting of Morse flow trees with jumpings coming from the $\mathbb{S}^1$ action (see the following Section \ref{sec:introduction_tree_with_jumping}). We prove the generalization of \eqref{eqn:introduction_fukaya_conjecture} for $\mathbb{S}^1 \curvearrowright M$ relating these two structures.

 \begin{theorem}[=Theorem \ref{thm:main_theorem}]\label{thm:introduction_theorem} We have
$$
 	\lim_{\lam \rightarrow \infty} m_{k}(\lam)  = \morseprod{k}.
 	$$
\end{theorem}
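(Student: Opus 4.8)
\emph{Strategy.} The plan is to imitate the proof of the non-equivariant Fukaya conjecture in \cite{klchan-leung-ma}, which already settles the $u^0$-part: first extract a combinatorial skeleton for $m_k(\lam)$ from the homological perturbation lemma, then perform a Laplace-type asymptotic analysis of each tree-contribution as $\lam\to\infty$. Concretely, Witten's deformation provides a deformation retract of $(\Omega^*(M),d)$ onto $(\Omega^*_{f,<1}(M),d)$ with homotopy $H_{f,\lam}=d^*_{f,\lam}G_{f,\lam}$; after tensoring with $\comp[[u]]$ one perturbs the differential of the big complex by the unary term $u\eqopera_1=u\iota_X$ (note $H_{f,\lam}$ is a homotopy for $d$ only, so $u\eqopera_1$ must enter as a perturbation, not as part of the differential the homotopy is built for) and adds the higher products $\wedge+u\eqopera_2,\,u\eqopera_3,\,u\eqopera_4,\dots$; the perturbation lemma then writes $m_k(\lam)$, transported to $CM^*_f[[u]]$ by $\phi^{-1}$, as $m_k(\lam)=\sum_{n\ge 0}u^n\sum_T\pm\,m_T(\lam)$, the sum over ribbon trees $T$ with $k$ leaves whose internal vertices carry either the binary label $\wedge$ or a label $\eqopera_j$ ($j\ge1$ inputs), with exactly $n$ of the latter, and $m_T(\lam)$ assembled from Witten inclusions at the leaves, $H_{f,\lam}$ on the internal edges, the labelled operations at the vertices, and the projection to $\Omega^*_{f,<1}(M)$ followed by $\phi^{-1}$ at the root. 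Since each $\eqopera$-operation carries a factor $u$, the inner sum is finite for each $n$, so it suffices to prove the theorem coefficient-by-coefficient in $u$: for every such $T$ and all $p_1,\dots,p_k,q\in\text{Crit}(f)$ one must show that $\langle m_T(\lam)(p_1,\dots,p_k),q\rangle$ converges, as $\lam\to\infty$, to the signed count of rigid gradient flow trees modelled on $T$ that jump along $\mathbb S^1$-orbits at the $\eqopera$-vertices.

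\emph{Asymptotics.} For $n=0$ the tree has only $\wedge$-vertices, $m_T(\lam)$ is exactly one of the operators analysed in \cite{klchan-leung-ma}, and its $\lam\to\infty$ limit is the corresponding term of the ordinary Morse $A_\infty$ product $m_k^{Morse}$; summing over such $T$ recovers the $u^0$-component of $\morseprod{k}$, by definition of gradient trees with jumping. For $n\ge1$ one pushes the method of \cite{HelSj4,klchan-leung-ma} through the new vertices. The Schwartz kernel of $H_{f,\lam}$ concentrates, as $\lam\to\infty$, along pairs of points on a common downward gradient trajectory of $f$, propagating ``mass'' with the flow; a $\wedge$-vertex then forces two incoming concentrated trajectories to meet at a single point, whereas at an $\eqopera_j$-vertex the pulled-back forms $\actionmap_t^*(\cdot)$ along the action map $\actionmap:\mathbb S^1\times M\to M$, integrated over a simplex $\simplex$ of circle parameters $0\le t_1\le\cdots\le t_{j-1}\le1$, force instead a \emph{jump}: an incoming concentrated form is translated by $\actionmap_t$ before it is merged or continued, and the circle parameters are integrated out. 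Running the Laplace/stationary-phase estimate on the resulting integral over the configuration of points on $T$, the edge-length parameters, and the simplex parameters, the leading term localizes on the finite set of rigid jumping trees of type $T$, the sub-leading terms are $O(\lam^{-\varepsilon})$, and the off-trajectory part of $H_{f,\lam}$ is exponentially small and discarded as in \cite{HelSj4}; summing over $T$ with $n$ $\eqopera$-vertices produces the $u^n$-component of $\morseprod{k}$, finishing the proof.

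\emph{Main obstacle.} The crux is exactly the asymptotics at the $\eqopera$-vertices. One must (i) identify the leading-order behaviour of $H_{f,\lam}\circ\eqopera_j$ on concentrated inputs and verify that the \emph{compact} integration over the simplex $\simplex$ of $\mathbb S^1$-parameters combines with the $\lam\to\infty$ concentration in the $M$-directions to give a genuine finite count rather than a positive-dimensional integral --- which forces the virtual-dimension bookkeeping (the degree shift carried by $u$ against the one-parameter freedom of an $\mathbb S^1$-orbit) to balance exactly, and thus requires a transversality statement for the moduli of jumping trees for generic $\mathbb S^1$-invariant $(f,g)$; (ii) treat the degeneration loci, namely near $\mathbb S^1$-fixed critical points, where the generating vector field $X$ of the action (hence the contraction $\iota_X=\eqopera_1$) vanishes and jumps can shrink to zero, and where the orbit direction may become tangent to the gradient, replacing the local models of \cite{HelSj4} by ones adapted to the circle action; and (iii) keep all $\lam$-error estimates uniform across these degenerations so the local pictures glue into the global count. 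I expect (i)--(ii) --- the dimension analysis and transversality for jumping trees near the fixed-point set --- to be the hard part; once the relevant moduli spaces are known to be cut out transversally, the localization itself follows the template of \cite{HelSj4,klchan-leung-ma}, with the $\mathbb S^1$-parameters entering merely as additional, harmless, compact integration variables.
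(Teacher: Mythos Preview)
Your strategy is the paper's: expand $m_k(\lam)$ over labeled ribbon trees via homological perturbation, then run the WKB/stationary-phase machinery of \cite{klchan-leung-ma} tree by tree, with the simplex coordinates attached to the $\eqopera$-vertices carried along as extra compact parameters. Your sketch of the asymptotics is accurate: an a~priori Agmon-type decay estimate (Section~\ref{sec:apriori_estimate}) localizes the integral to neighborhoods of rigid jumping trees, and an inductive WKB expansion along each such tree (Section~\ref{sec:wkb_approximation_method}) evaluates the leading contribution as $\pm1$.

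Where you diverge is the setup, and this is what manufactures your ``main obstacle''. The theorem is stated for an $A_\infty$-\emph{category}: one works with a generic sequence $\vec f=(f_0,\dots,f_k)$ of smooth functions, each edge numbered $ij$ carrying its own Morse function $f_{ij}=f_j-f_i$ and homotopy operator $H_{ij}$. Only the metric is $\mathbb S^1$-invariant; the functions $f_i$ are \emph{not}. With this in place your obstacles dissolve. Transversality (your (i)) is \emph{assumed} as genericity of $\vec f$ (Definition~\ref{def:moduli_space_as_intersection}), which is unobstructed since the $f_i$ range over all smooth functions; the degree bookkeeping is the explicit dimension formula~\eqref{eqn:dimension_of_moduli_space}. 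Critical points of $f_{ij}$ have no reason to be $\mathbb S^1$-fixed and the gradients no reason to be tangent to orbits, so (ii) singles out no special locus and no local model beyond those of \cite{HelSj4,klchan-leung-ma} is needed. And since genericity is also imposed on all boundary strata of the simplices and on every subsequence of $\vec f$, in dimension zero the moduli are already compact (broken or boundary configurations have negative expected dimension), so (iii) reduces to the uniform resolvent estimates already available. Had you insisted on a single $\mathbb S^1$-invariant Morse function your worries would be justified---but the paper sidesteps them by design.
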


\subsection{The operation $\morseprod{k}$'s}\label{sec:introduction_tree_with_jumping}
To describe $\morseprod{k}$'s, we fix a generic sequence (see Definition \ref{def:moduli_space_as_intersection}) of functions $(f_0,\dots,f_k)$ such that their differences $f_{ij}: = f_j -f_i$ are assumed to be Morse-Smale as in Definition \ref{def:morsesmale}. The Morse theoretical $A_\infty$ product $\morseprod{k}$'s take the form
$$
\morseprod{k}:= \sum_{\tr}  \morseprod{k,\tr} : CM^*_{f_{(k-1)k}}[[u]] \otimes \cdots \otimes CM^*_{f_{01}}[[u]] \rightarrow CM^*_{f_{0k}}[[u]]
$$
 which is a summation over directed labeled ribbon $k$-tree $\tr$ with $k$-incoming edges and $1$ outgoing edge, where internal vertices are either labeled by $1$ or by $u$. For example (see Section \ref{sec:equivariant_morse} for details), if we take the tree $\tr$ to be the one with two incoming edges $e_{12}$ and $e_{01}$ joining the vertex $v_r$ connected to the outgoing edge $e_{02}$, with $v_r$ being labeled by $u$. The gradient flow trees with type $\tr$ will be consisting of gradient flow lines of $f_{12}$, $f_{01}$ and $f_{02}$ which ending at critical points $q_{12}$, $q_{01}$ and $q_{02}$ respectively, that can be joined together at a point $x_{v_r} \in M$ with further help of the $\mathbb{S}^1$ action $\actionmap_t : M \rightarrow M$ (for some $t$) as shown in the Figure \ref{fig:jumping_tree}. As a consequence of the above Theorem \ref{thm:introduction_theorem}, the Morse (pre)-category (here pre-category means this operation only defined for generic sequence $(f_0,\dots,f_k)$) on $\mathbb{S}^1 \curvearrowright M$ is an $A_\infty$ (pre)-category.

 \begin{figure}[h]
 	\centering
 	\includegraphics[scale = 0.65]{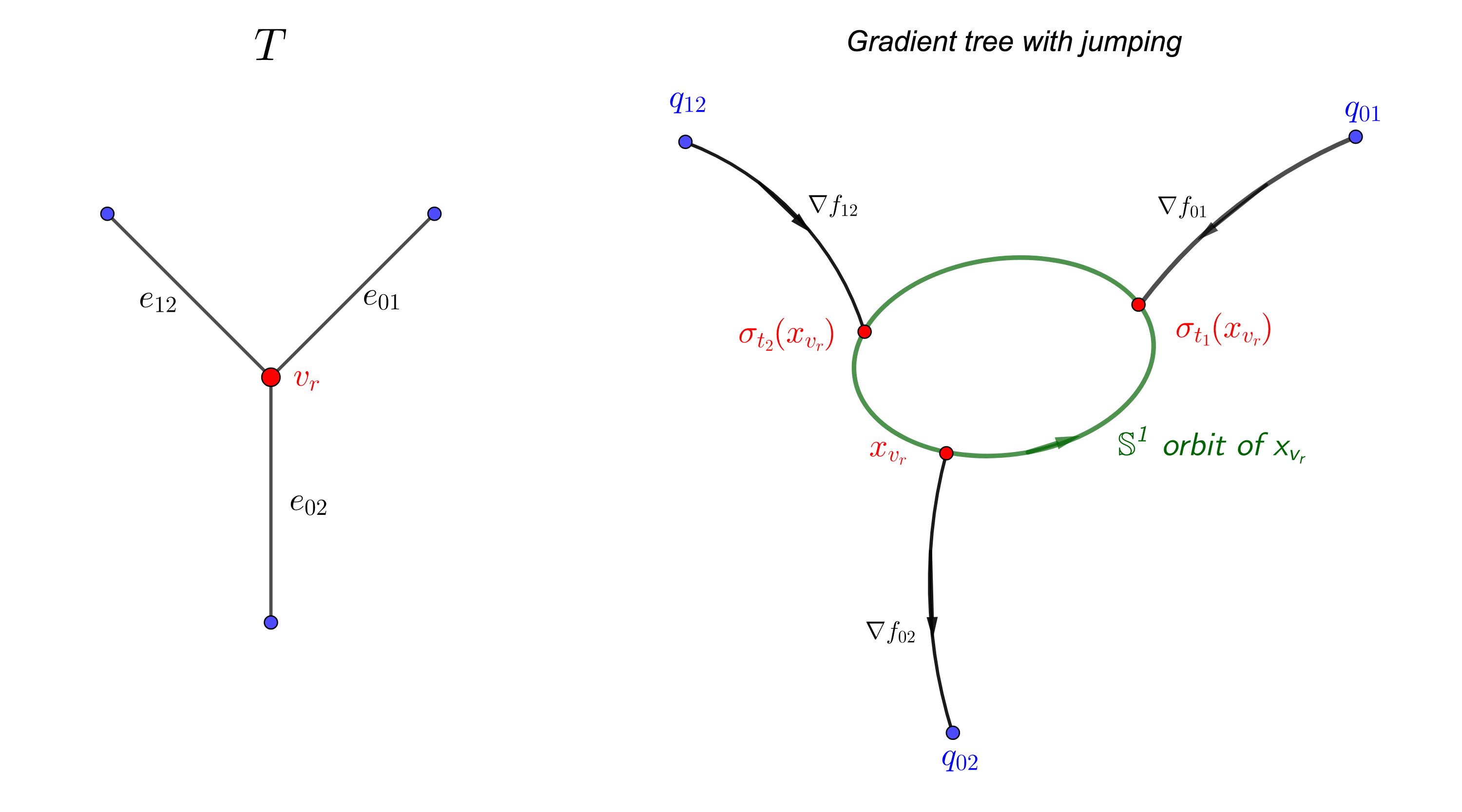}
 	\caption{Gradient tree with jumping of type $\tr$}\label{fig:jumping_tree}
 \end{figure}

\begin{corollary}
	The operations $\morseprod{k}$'s satisfy the $A_\infty$ relation for generic sequences of functions.
	\end{corollary}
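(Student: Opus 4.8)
The plan is to obtain the $A_\infty$ (pre-category) relations for $\{\morseprod{k}\}$ by passing the corresponding relations for $\{m_k(\lam)\}$ to the limit $\lam\to\infty$, using Theorem~\ref{thm:introduction_theorem}. For each finite $\lam$ the operations $m_k(\lam)$ are produced from the Getzler--Jones--Petrack $A_\infty$ algebra $(\Omega^*(M)[[u]],\tilde m_k)$ by the homological perturbation lemma with homotopy operator $H_{f,\lam}$ (in the multi-function version, so that the output is an $A_\infty$ pre-category whose objects are the chosen functions and which, after transport via the Witten isomorphism, lives on the Morse complexes $CM^*_{f_{ij}}[[u]]$); since homological perturbation transfers an $A_\infty$ structure to an $A_\infty$ structure, the family $\{m_k(\lam)\}$ satisfies the $A_\infty$ relations identically in $\lam$. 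To exploit this I would first fix a generic sequence $(f_0,\dots,f_k)$ as in Definition~\ref{def:moduli_space_as_intersection}, with all differences Morse--Smale, chosen so that every sub-sequence $(f_{i_0},\dots,f_{i_r})$ is also generic; a finite intersection of genericity conditions is again generic, so this is harmless, and it guarantees that all the structure maps entering any $A_\infty$ relation of arity $\le k$ are simultaneously defined --- for finite $\lam$ and in the limit --- and that Theorem~\ref{thm:introduction_theorem} applies to each of them.

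Next I would observe that each such relation is a \emph{finite} identity. The homological perturbation expansion of $m_j(\lam)$ runs over rooted planar trees with $j$ leaves and internal vertices of arity $\ge 2$, hence with at most $j-1$ internal vertices, each of which contributes at most one power of $u$ (as $\tilde m_\bullet$ has $u$-degree $\le 1$); thus $m_j(\lam)$ has $u$-degree $\le j-1$, and similarly $\morseprod{j}$ is a finite sum over trees of bounded size. Consequently the arity-$n$ $A_\infty$ relation is a finite alternating sum of compositions $m_{r+1+t}(\lam)\circ(\mathrm{id}^{\otimes r}\otimes m_s(\lam)\otimes\mathrm{id}^{\otimes t})$, and each $u$-coefficient of each summand is a multilinear map between finite-dimensional $\comp$-vector spaces whose structure constants are polynomial --- in particular continuous --- functions of the structure constants of the maps being composed.

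Finally I would let $\lam\to\infty$. By Theorem~\ref{thm:introduction_theorem} the structure constants of $m_j(\lam)$ converge, coefficient by coefficient in $u$, to those of $\morseprod{j}$, so continuity of composition and of finite sums yields that the arity-$n$ expression for $\{m_\bullet(\lam)\}$ converges to the corresponding expression for $\{\morseprod{\bullet}\}$; since the former is identically $0$, so is the limit, which is exactly the $A_\infty$ (pre-category) relation for $\morseprod{1},\dots,\morseprod{k}$. The substantive point is the bookkeeping of the first two steps: arranging genericity simultaneously for all the sub-sequences that appear, and controlling the $u$-degrees so that every limit in sight is the limit of a finite expression. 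No independent sign computation on the tree side is needed, since we merely pass an identity in $\lam$ to its limit and whatever signs occur are inherited automatically; I expect the genericity bookkeeping to be the main thing to spell out in full.
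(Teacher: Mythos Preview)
Your overall strategy is exactly the paper's: the corollary is stated there (and again in Theorem~\ref{thm:main_theorem}) simply as a consequence of the main theorem, i.e.\ one passes the $A_\infty$ relations for the perturbed operations $m_k(\lam)$ to the limit $\lam\to\infty$. So the architecture of your argument is correct and matches the paper.

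There is, however, a genuine error in your finiteness step. You assert that the homological perturbation sum for $m_j(\lam)$ runs over trees with internal vertices of arity $\ge 2$, hence with at most $j-1$ internal vertices, so that $m_j(\lam)$ has $u$-degree $\le j-1$. This is false in the present setting: in Definition~\ref{def:labeled_k_tree} a vertex labeled by $u$ may be \emph{bivalent} (one incoming edge), reflecting the fact that $\tilde m_1 = d + u\,\eqopera_1$ has a nontrivial $u$-part. Already $m_1(\lam)$ is an honest power series
\[
m_1(\lam) \;=\; d \;+\; u\,P\eqopera_1 \;+\; u^2\,P\eqopera_1 H\eqopera_1 \;+\;\cdots,
\]
coming from the chain of $1$-trees with $0,1,2,\dots$ bivalent $u$-vertices; more generally $m_j(\lam)$ has terms of every $u$-degree. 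Your bound ``$u$-degree $\le j-1$'' therefore cannot be used to reduce the $A_\infty$ relation to a finite identity.

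The repair is easy and keeps your argument intact: work coefficient-by-coefficient in $u$. For fixed $k$ and fixed $n_\tr=m$, the combinatorics of Definition~\ref{def:labeled_k_tree} force only finitely many labeled ribbon $k$-trees (the $1$-labeled vertices are trivalent, so their number is $\le k-1$, and once this and $m$ are fixed the valencies of the $u$-vertices are constrained by an edge count). Hence each $u^m$-coefficient of $m_k(\lam)$, and of every composite appearing in the $A_\infty$ relation, is a finite sum of maps between finite-dimensional spaces. Now your continuity-of-composition argument applies verbatim to each $u^m$-coefficient, and Theorem~\ref{thm:main_theorem} (which is stated tree-by-tree, i.e.\ for each $m_{k,\tr}(\lam)$ separately) gives the limit. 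The genericity bookkeeping you outline is fine as stated.
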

 
 \begin{remark}
 	In \cite[Section 8b]{seidel2007biased}, Seidel proposed the $A_\infty$ operators $m_k^{\text{Floer}}$ on the symplectic cochain complex for a Liouville domain $X$, which corresponds to $\morseprod{k}$'s if we think of $M$ as a finite dimensional analogue of $\mathcal{L}X$. The corresponding $m_1^{\text{Floer}}$ operation is studied in details in \cite{zhao2014periodic}. The above Theorem \ref{thm:introduction_theorem} suggest how Witten deformation can provide a linkage between the Getzler-Jones-Petrack's operation $\tilde{m}_k$ on $\mathcal{L}X$ and the Floer theoretical operations introduced by Seidel through the investigation of the corresponding finite dimensional situation. 	
 \end{remark}

 This paper consists of three parts. In Section \ref{sec:witten_deformation} we set up the Witten deformation of Getzler-Jones-Petrack's $A_\infty$ operations $\tilde{m}_k$'s, the definition of counting gradient flow trees with jumping, and state our Main Theorem \ref{thm:main_theorem}. In Section \ref{sec:recalling_old_result}, we recall the necessary analytic result by following \cite{klchan-leung-ma}. The rest of Section \ref{sec:proof_of_theorem} will be a proof of Theorem \ref{thm:main_theorem} by figuring out the exact relations between the operations $m_{k,\tr}(\lam)$ and counting of gradient trees.
 
 \section*{Acknowledgement}
 The work in this paper is inspired by a talk of Naichung Conan Leung given at the Southern University of Science and Technology, and I would like to express my gratitude to Kwokwai Chan and Naichung Conan Leung for useful conversations when writing this paper.  
\section{Witten's deformation of $S^1$-equivariant de Rham complex}\label{sec:witten_deformation}
We always let $(M,g)$ to be an $n$-dimensional compact oriented Riemannian manifold, and denote it volume form by $\vol_M$ (or simply $\vol$). We assume there is an  smooth $\mathbb{S}^1$ action $\actionmap : \mathbb{S}^1 \times M \rightarrow M$ on $M$ preserving $(g,\vol)$. We should write $\actionmap_t : M \rightarrow M$ to be the action for a fixed $t \in \mathbb{S}^1$.

\subsection{$S^1$-equivariant de Rham complex and category}\label{sec:equivariant_de_rham}
We begin with recalling the Definition of $S^1$-equivariant de Rham $A_\infty$ algebra introduced in \cite{getzler1991differential}, which is reformulated to be $A_\infty$ category as follows for the convenient of presentation of this paper.

\begin{definition}\label{def:s_1_equivariant_de_rham}
	The $S^1$-equivariant de Rham $A_\infty$ category $\dr{}$ consisting of object being smooth functions $f : M \rightarrow \real$, with morphism $\Hom(f,g) := \Omega^*(M)[[u]]$ where $u$ is a formal variable. The $A_\infty$ operations $\tilde{m}_k : \Hom(f_{k-1},f_k) \otimes \cdots \otimes \Hom(f_0,f_1) \cong (\Omega^*(M)[[u]])^{\otimes k} \rightarrow \Hom(f_0,f_k) \cong \Omega^*(M)[[u]]$ is defined by $\tilde{m}_1(\alpha_{01}) = d(\alpha_{01}) + u \eqopera_1(\alpha_{01})$, $\tilde{m}_2(\alpha_{12},\alpha_{01}) = (-1)^{|\alpha_{12}|+1} \alpha_{12}\wedge \alpha_{01} + u \eqopera_2(\alpha_{12},\alpha_{01})$ and $\tilde{m}_k(\alpha_{(k-1)k},\dots,\alpha_{01}) = u \eqopera_k(\alpha_{(k-1)k},\dots,\alpha_{01})$ for $\alpha_{ij} \in \Hom (f_i,f_j)$. 
	
	Here the operator $\eqopera_k$ is defined by the action $\eqopera_1(\alpha_{ij}) = \int_{\mathbb{S}^1} (\iota_{\dd{t}}\actionmap^*(\alpha_{ij})) dt$, and for $k\geq2$ we use
	$$
	\eqopera_k(\alpha_{(k-1)k},\dots,\alpha_{01}):= \int_{0\leq t_k \leq \cdots \leq t_1 \leq 1} \left( \iota_{\dd{t_k}}(\actionmap^*(\alpha_{(k-1)k})) \wedge \cdots \wedge  \iota_{\dd{t_1}}(\actionmap^*(\alpha_{01}))  \right) dt_k \cdots dt_1.
	$$ 
	\end{definition}
The fact that the about operations $\tilde{m}_k$'s form an $A_\infty$ category is proven in \cite[Theorem 1.7]{getzler1991differential}. 

\subsection{Homological perturbation via Witten's deformation}\label{sec:homological_perturbation}
We follow \cite[Section 2.2.]{klchan-leung-ma} to introduced the Witten deformation with a real parameter $\lam >0$, which is orignated from \cite{witten82}. For each $f_i$ and $f_j$, we twist the volume form $\vol$ by $f_{ij}:= f_j - f_i$ as $\vol_{ij} = e^{- 2\lam f_{ij}} \vol$, and let $d_{ij}^*:= e^{2\lam f_{ij}} d^* e^{-2\lam f_{ij}} = d^* + 2 \lam \iota_{\nabla f_{ij}}$ to be the adjoint of $d$ with respect to the volume form $\vol_{ij}$. The Witten Laplacian is defined by 
$
\Delta_{ij}:= d d^*_{ij} + d^*_{ij} d,
$
acting on the complex $\Omega^*(M)[[u]]$ \footnote{Stictly speaking, the differential forms here depend on the real parameter $\lam$ while we prefer to subpress the dependence in our notation.}. We denote the span of eigenspaces with eigenvalues contained in $[0,1)$ by $\Omega^*_{ij,<1}(M)[[u]]$, or simply $\Omega^*_{ij,<1}[[u]]$. We use construction in \cite{klchan-leung-ma} originated from \cite{fukaya05} using homological perturbation lemma \cite{kontsevich00}, which obtain a new $A_\infty$ structure from $m_k$'s  as follows.

\begin{definition}\label{def:labeled_k_tree}
	A {\em (directed) $k$-tree} labeled $\tr$ consists of a finite set of vertices $\bar{\tr}^{[0]}$ together with a decomposition
	$\bar{\tr}^{[0]} = \tr^{[0]}_{in} \sqcup \tr^{[0]} \sqcup \{v_o\},$
	where $\tr^{[0]}_{in}$, called the set of incoming vertices, is a set of size $k$ and $v_o$ is called the outgoing vertex (we also write $\tr^{[0]}_\infty := \tr^{[0]}_{in} \sqcup \{v_o\}$ and $\tr^{[0]}_{ni} := \tr^{[0]} \cup \{v_o\}$), a finite set of edges $\bar{\tr}^{[1]}$, two boundary maps $\partial_{in} , \partial_o : \bar{\tr}^{[1]} \rightarrow \bar{\tr}^{[0]}$ (here $\partial_{in}$ stands for incoming and $\partial_o$ stands for outgoing), and a labeling of every internal vertices $\tr^{[0]}$ by either $1$ or $u$, satisfying the following conditions:
	\begin{enumerate}
		\item
		Every vertex $v \in \tr^{[0]}_{in}$ has valency one, and satisfies $\# \partial_{o}^{-1}(v) = 0$ and $\# \partial_{in}^{-1}(v) = 1$; we let $\tr^{[1]} := \bar{\tr}^{[1]}\setminus \partial_{in}^{-1}(\tr^{[0]}_{in})$.
		\item Every vertex $v \in \tr^{[0]}$ has an unique edge $e_{v,o} \in \bar{\tr}^{[1]}$ such that $\partial_{in}(e_{v,o}) = v$, and only trivalent vertices in $\tr^{[0]}$ can be labeled with $1$. 
		\item
		For the outgoing vertex $v_o$, we have $\# \partial_{o}^{-1}(v_o) = 1$ and $\# \partial_{in}^{-1}(v_o) = 0$; we let $e_o := \partial_o^{-1}(v_o)$ be the outgoing edge and denote by $v_r \in \tr^{[0]}_{in} \sqcup \tr^{[0]}$ the unique vertex (which we call the root vertex) with $e_o = \partial^{-1}_{in}(v_r)$.
		\item
		The {\em topological realization}
		$|\bar{\tr}| := \left( \coprod_{e \in \bar{\tr}^{[1]}} [0,1] \right) / \sim$
		of the tree $\tr$ is connected and simply connected; here $\sim$ is the equivalence relation defined by identifying boundary points of edges if their images in $\tr^{[0]}$ are the same.
	\end{enumerate}
	By convention we also allow the unique labeled $1$-tree with $\tr^{[0]} = \emptyset$. Two labeled $k$-trees $\tr_1$ and $\tr_2$ are {\em isomorphic} if there are bijections $\bar{\tr}^{[0]}_1 \cong \bar{\tr}^{[0]}_2$ and $\bar{\tr}^{[1]}_1 \cong \bar{\tr}^{[1]}_2$ preserving the decomposition $\bar{\tr}^{[0]}_i = \tr^{[0]}_{i,in} \sqcup \tr^{[0]}_i \sqcup \{v_{i,o}\}$ and boundary maps $\partial_{i,in}$ and $\partial_{i,o}$ and the labelling of $\tr^{[0]}$. The set of isomorphism classes of labeled $k$-trees will be denoted by $\tree{k}$. For a labeled $k$-tree $\tr$, we will abuse notations and use $\tr$ (instead of $[\tr]$) to denote its isomorphism class.
	
	A labeled ribbon $k$-tree is a $k$-tree $\tr$ with a cyclic ordering of $\partial_{in}^{-1}(v) \sqcup \partial_o^{-1}(v)$ for each trivalent vertex $v \in \tr^{[0]}$, and isomorphism of labeled ribbon $k$-trees are further required to preserve this ordering. A labeled ribbon $k$-tree can have its topological realization $|\bar{\tr}|$ being embedded into the unit disc $D$, with $\tr^{[0]}_\infty$ lying on the boundary $\partial D$ such that the cyclic ordering of $\partial_{in}^{-1}(v) \sqcup \partial_o^{-1}(v)$ agree with the anti-clockwise orientation of $D$. The set of isomorphism classes of labeled ribbon $k$-trees will be denoted by $\ltree{k}$.
	\end{definition}

\begin{notation}\label{not:morse_function_associated_to_edges}
	For each $\tr \in \ltree{k}$, we can associated to each edge $e \in \bar{\tr}^{[1]}$ a numbering by pair of integer $ij$ using the embedding $|\bar{\tr}| \rightarrow D$ by the rules: there are $k+1$ connected components of $D \setminus |\bar{\tr}| $, and we assign each component by integers $0,\dots,k$; each (directed) edge $e \in \bar{\tr}^{[1]}$ with region numbered by $i$ on its left and region numbered by $j$ on its right is numbered by $ij$; the incoming edges numbered by $e_{(k-1)k},\dots ,e_{01}$ and the outgoing edge $e_{0k}$ are in clockwise ordering of $\partial D$. 
	
	A pair of $v \in \tr^{[0]} \cup \{v_o\}$ attached to an edge $e \in \bar{\tr}^{[1]}$ is called a flag, and we will let $\digamma(\tr)$ to be the set of all flags. For every flag $(e,v)$, we let $\tr_{e,v}$ to be the unique subtree with outgoing vertex being $v$ if $\partial_o(e) = v$, and we let $\tr_{e,v}$ to be the unique subtree with outgoing edge being $e$ if $\partial_{in}(e) = v$.
	\end{notation}

\begin{definition}\label{def:operation_associated_to_labeled_tree}
	Given a labeled ribbon $k$-tree $\tr \in \ltree{k}$ with an embedding $|\bar{\tr}| \rightarrow D$, we assoicate to it an operation $m_{k,\tr}(\lam) : \Omega^*_{(k-1)k,<1}[[u]] \otimes \cdots \otimes \Omega^*_{01,<1}[[u]] \rightarrow \Omega^*_{0k,<1}[[u]]$ by the following rules :
\begin{enumerate}
	\item aligning the inputs $\varphi_{(k-1)k},\cdots,\varphi_{01}$ at the incoming vertices $\tr^{[0]}_{in}$ according to the clockwise ordering induced from $D$;
	\item  if a vertex $v \in \tr^{[0]}$ has incoming edges $e_{v,1},\dots,e_{v,l}$ and outgoing edge $e_{v,o}$ attached to it such that $e_{v,l},\dots,e_{v,1}, e_{v,o}$ is in clockwise orientation, we apply the operation $\wedge$ if $v$ is labeled with $1$ (and hence trivalent) and the operation $\eqopera_l$ if $v$ is labeled with $u$;
	\item for an edge $e \in \tr^{[0]}$ which is numbered by $ij$, we apply the homotopy operator $H_{ij}:= d^*_{ij} G_{ij}$ where $G_{ij}$ is the Witten's twisted Green operator associated to the Witten Laplacian $\Delta_{ij}$;
	\item for the unique outgoing edge $e_o$, we apply the operator $P_{0k}$ which is the orthogonal projection $P_{0k} : \Omega^*[[u]] \rightarrow \Omega^*_{0k,<1}[[u]]$ with respect to the twisted $L_2$-norm obtained from the volume form $\vol_{0k}$. 
\end{enumerate}
By convention, we define $m_{1,\tr}(\lam)$ for the unique tree with $\tr^{[0]} = \emptyset$ to be the restriction of $d$ on $\Omega^*_{ij,<1}[[u]]$. For each labeled ribbon $k$-tree $\tr$, we assign $n_{\tr}$ to be the number of vertices in $\tr^{[0]}$ labeled with $u$, and we let $m_k(\lam):= \sum_{\tr \in \ltree{k}} u^{n_{\tr}} m_{k,\tr}(\lam)$ to be the homological perturbed $A_\infty$ strucutre. 
	\end{definition}

It is well-known that (see e.g. \cite[Chapter 8]{dbrane}) the perturbed $A_\infty$ structure $m_k(\lam)$'s satisfy the $A_\infty$ relation. And we obtain a new category $\dr{<1}$ via Witten deformation. 

\subsection{Relation with $S^1$-equivariant Morse flow trees}\label{sec:equivariant_morse}
In \cite{ HelSj4,witten82,zhang}, a relation between the Morse complex $\mcpx{f_{ij}}$ and $\Omega^*_{ij,<1}$ is established when $f_{ij}$ is a Morse-Smale function in following Definition \ref{def:morsesmale}. Following \cite{zhang}, it is an isomorphism 
\begin{equation}\label{eqn:witten_map}
\Phi_{ij} : \Omega^*_{ij,<1} \rightarrow \mcpx{f_{ij}} ; \quad \Phi_{ij}(\alpha):= \sum_{p \in \text{Crit}(f_{ij})} \int_{V_p^-} \alpha,
\end{equation}
where $\text{Crit}(f_{ij})$ is the finite set of critical points of $f_{ij}$ (with Morse index of $p$ given by number of negative eigenvalues of $\nabla^2 f_{ij}(p)$), and $V_p^-$ (Notice that we further choose an orientation of $V_p^-$ by choosing a volume element of the normal bundle $NV_p^+$) is the unstable submanifold associated to $p$ which is the union of all gradient flow lines $\gamma(s)$ of $\nabla f_{ij}$ which limit toward $p$ as $s \rightarrow \infty$. Furthermore, the de Rham differential is identified with the Morse differential $\delta_1$ defined via counting Morse flow lines. 
\begin{definition}\label{def:morsesmale}
	A Morse function $f_{ij}$ is said to satisfy the Morse-Smale condition if $V^+_p$ and $V^-_q$ intersecting transversally for any two critical points $p\neq q$ of $f_{ij}$.
\end{definition}
We illustrate how the technique in \cite{klchan-leung-ma} can be used to establish a relation between $\lam\rightarrow \infty$ limit of the operation $m_k^{\tr}(\lam)$ with a new Morse-theoretical counting for $\mathbb{S}^1 \rightarrow M$ defined as follows. 

\begin{notation}\label{not:metric_k_tree}
	A metric labeled $k$-tree (ribbon) $\mtr$ is a labeled (ribbon) $k$-tree together with a length function $l:\tr^{[1]} \setminus \{e_o\}\rightarrow(0,+\infty)$. For each $e \in \bar{\tr}^{[1]}$, we let $\mathcal{I}_e = (-\infty,0]$ if $e \in \tr^{[1]}_{in}$, $\mathcal{I}_e = [0, l(e)]$ for $e \in \tr^{[1]} \setminus \{e_o\}$ and $\mathcal{I}_{e_o} = [0,\infty)$. The space of metric structure on $\tr$, denoted by $\mathcal{S}(\tr)$, is a copy of $(0,+\infty)^{|\tr^{[1]}|-1}$. The space $\mathcal{S}(\tr)$ can be partially compactified to a manifold with corners $(0,+\infty]^{|\tr^{[1]}|-1}$, by allowing the length of internal edges going to be infinity. In particular, it has codimension-$1$ boundary $
	\partial \overline{\mathcal{S}(\tr)} = \coprod_{\tr=\tr{'}\sqcup \tr{''}} \mathcal{S}(\tr{'})\times \mathcal{S}(\tr{''})$.
	
	For every vertex $v \in \bar{\tr}$, we use $\val(v)+1$ to denote the valency of $v$. We write  $\simplex_l := \{ (t_l,\dots,t_1) \in [0,1]^{l} \ | \ 0\leq t_l \leq \cdots \leq t_1 \leq 1 \}$ for $l>1$, and $\simplex_1 = \mathbb{S}^1$ \footnote{This is not the $1$-simplex, but we would like to unify our notation in this way.}, and attach to each vertex $v$ labeled with $u$ a simplex $\simplex_{\val(v)}$. Writing $\lvertex$ to be the collection of all vertices with label $u$, we let $\mathbf{S}(\tr):= \prod_{v \in \lvertex} \simplex_{\val(v)} \times \mathcal{S}(\tr)$. 
	\end{notation}

\begin{definition}\label{def:equivariant_morse_counting}
	Given a sequence $\vec{f} = (f_0,\dots,f_k)$ such that all the difference $f_{ij}$'s are Morse, with a sequence of points $\vec{q} = (q_{(k-1)k},\dots ,q_{01},q_{0k})$ such that $q_{ij}$ is a critical point of $f_{ij}$, and a metric labeled ribbon $k$-tree $\mtr$, a gradient flow tree (with jumping) $\ftree$ (readers may see Figure \ref{fig:jumping_tree} for an example) of type $(\tr,\vec{f},\vec{q})$ consisting of a gradient flow line $\gamma_{ij} : \mathcal{I}_{e_{ij}} \rightarrow M$ of the Morse function $f_{ij}$ for each edge $e_{ij} \in \bar{\tr}^{[1]}$ numbered by $ij$, and a point $\mathbf{t}_{v} = (t_{v,\val(v)},\dots,t_{v,1}) \in \simplex_{\val(v)}$ for every $v \in \lvertex$ satisfying:
	\begin{enumerate}
		\item $\lim_{s \rightarrow -\infty} \gamma_{e_{i(i+1)}}(s) = q_{i(i+1)}$ for the incoming edges $e_{i(i+1)} \in \tr^{[1]}_{in}$, and $\lim_{s \rightarrow \infty} \gamma_{e_{0k}}(s) = q_{0k}$ for the unique outgoing edge $e_o$;
		\item for a trivalent vertex $v \in \tr^{[0]}$ labeled by $1$ with two incoming edges $e_{jl}$, $e_{ij}$ and outgoing edge $e_{il}$, we require that $\gamma_{ij}(l(e_{ij})) = \gamma_{jl}(l(e_{jl})) = \gamma_{il}(0)$;
		\item for a vertex $v \in \lvertex$ with incoming edges $e_{i_{l-1} i_{l}}, \dots, e_{i_0 i_1}$ and outgoing edge $e_{i_0 i_{l}}$, we require that $\actionmap( -t_{v,l}, \gamma_{i_{l-1} i_{l}}(l(e_{i_{l-1} i_{l}})))  = \cdots  = \actionmap (-t_{v,1}, \gamma_{i_0 i_1}(l(e_{i_0 i_1}))) = \gamma_{i_0 i_{l}} (0)$, where $l = \val(v)$ and $\actionmap$ is the $\mathbb{S}^1$ action map in the beginning of Section \ref{sec:witten_deformation}. 
		\end{enumerate}
	We will let $\mathcal{M}_{\tr}(\vec{f},\vec{q})$ to denote the moduli space (as a set) of gradient flow lines of type $\tr$. For the unique tree with $\tr^{[0]} = \emptyset$, we let $\mathcal{M}_\tr(\vec{f},\vec{q})$ to be the moduli space of gradient flow lines quotient by the extra $\real$ symmetry by convention.
	\end{definition}

Similar to the moduli space of gradient flow trees without $\mathbb{S}^1$ action (see e.g. \cite[Section 2.1.]{klchan-leung-ma}), we can describe $\mathcal{M}_{\tr}(\vec{f},\vec{q})$ as intersection of stable and unstable submanifolds. 

\begin{definition}\label{def:moduli_space_as_intersection}
	Given the sequence $\vec{f}$ and $\vec{q}$ as in the above Definition \ref{def:equivariant_morse_counting}, we define a smooth map $\mathbf{f}_{\tr,i(i+1)} : V_{q_{i(i+1)}}^+ \times \mathbf{S}(\tr) \rightarrow  M$ for each $i = 0,\dots,k-1$ as follows. Given a incoming edge $e_{i(i+1)}$, there is a unique sequence of edges $e_{i_0 j_0}= e_{i(i+1)} , e_{i_1 j_1} , \dots , e_{i_m j_m}, e_{i_{m+1} j_{m+1}} = e_{o}$ with $v_d := \partial_o(e_{i_d j_d})$ forming a path from the incoming vertex $v_{i(i+1)}$ to the outgoing vertex $v_o$. Fixing a point $x_0 \in V_{q_{i(i+1)}}^+$ and a point $((\mathbf{t}_v)_{v \in \lvertex}, (l(e))_{e \in \tr^{[1]} \setminus \{e_o\}}) \in \mathbf{S}(\tr)$, we determind a point $x_d \in M$ inductively for $ 0 \leq d \leq m+1$ by the rules:
	\begin{enumerate}
		\item  if $v_d$ is labeled with $1$, we simply take $x_{d+1}$ to be the image of $x_d$ under $l(e_{i_{d+1} j_{d+1}})$ time flow of $\nabla f_{i_{d+1}j_{d+1}}$ for $d<m$, and $x_{d+1} = x_{d}$ for $d=m$; 
		\item and if $v_d$ is labeled with $u$, we take $x_{d+1}$ to be the image of $\actionmap(-t_{v_d, l}, x_d)$ under the $l(e_{i_{d+1} j_{d+1}})$ time flow of $\nabla f_{i_{d+1}j_{d+1}}$ if $d<m$, and $x_{d+1}=\actionmap(-t_{v_d, l}, x_d)$ for $d=m$, where $e_{i_dj_d}$ is the $l$-th incoming edge attached to $v_d$ in the {\em anti-clockwise} orientation.
		\end{enumerate}
	
	These map can be put together as $\mathbf{f}_\tr : V_{q_{0k}}^- \times V_{q_{(k-1)k}}^+ \times \cdots \times V_{q_{01}}^+ \times \mathbf{S}(\tr) \rightarrow M^{k}$ using the natural embedding $V_{q_{0k}}^- \hookrightarrow M$ for the first component. Therefore we see that $\mathcal{M}_\tr(\vec{f},\vec{q}) = \mathbf{f}_\tr^{-1}(\mathbf{D})$ where $\mathbf{D} = M \hookrightarrow M^{k+1}$ is the diagonal.  
	
	We say a sequence of function $\vec{f}$ generic if for any sequence of critical points $\vec{q}$, any labeled tree $\tr$ the associated intersection $\mathbf{f}_\tr$ with $\mathbf{D}$ is transversal with expected dimension (meaning that it is empty when expected negative dimensional intersection), and the same hold when restricting $\mathbf{f}_\tr$ on any boundary strata of $V_{q_{0k}}^- \times V_{q_{(k-1)k}}^+ \times \cdots \times V_{q_{01}}^+ \times \mathbf{S}(\tr)$ (the stratification coming from that of $\simplex_{\val(v)}$) and for any subsequence of $\vec{f}$. 
	\end{definition}

Suppose we are given a generic sequence $\vec{f}$ with $\vec{q}$ and $\tr$ as in the above Definition \ref{def:moduli_space_as_intersection}, then we can compute the dimension of the moduli space as
\begin{equation}\label{eqn:dimension_of_moduli_space}
\dim(\mathcal{M}_\tr(\vec{f},\vec{q})) = \deg(q_{0k}) - \sum_{i=0}^{k-1} \deg(q_{i(i+1)}) + \sum_{v \in \lvertex} \val(v) + |\tr^{[1]} | - 1. 
\end{equation}

\begin{definition}\label{def:sign_of_morse_counting}
	Given generic $\vec{f}$, $\vec{q}$ and $\tr$ as in the above Definition \ref{def:moduli_space_as_intersection} such that $\dim(\mathcal{M}_\tr(\vec{f},\vec{q}))=0$, with a flow tree $\ftree \in \mathcal{M}_\tr(\vec{f},\vec{q})$, we assign a sign $(-1)^{\chi(\ftree)}$ by assigning a differential form $\vol_{e,v} \in \bigwedge^n T^*M_{\gamma_{e}(v)}$ (Here we abuse the notation to use $v$ to stand for the corresponding point in $\mathcal{I}_{e}$) for each flag $(e,v) \in \digamma(\tr)$, inductively along the tree $\tr$ as follows: 
	\begin{enumerate}
		\item for an incoming edge $e_{i(i+1)}$ with $v = \partial_o(e_{i(i+1)})$, we let $\vol_{e_{i(i+1)},v}$ to be the restriction of the volume form of the normal bundle $NV_{q_{i(i+1)}}^+$ onto $\gamma_{e_{i(i+1)}}(v)$;
		\item for a vertex $v \in \tr^{[0]}$ with incoming edges $e_{i_{l-1} i_{l}}, \dots, e_{i_0 i_1}$ and outgoing edge $e_{i_0 i_{l}}$ arranged in clockwise orientation with $\vol_{e_{i_{d-1} i_d},v}$ defined, we let $\vol_{e_{i_0i_2},v}:=(-1)^{|\vol_{e_{i_{2}i_1},v}|+1} \vol_{e_{i_{2}i_1},v} \wedge \vol_{e_{i_0 i_1},v}$ when $v$ is labeled with $1$ \footnote{Hence we have valency of $v$ being $3$.}, and we let $\vol_{e_{i_0 i_{l}},v}:= \actionmap_{t_{v,l}}^*( \iota_{\actionmap_*(\dd{t_l})} \vol_{e_{i_{l-1} i_{l}},v}) \wedge \cdots \wedge \actionmap_{t_{v,1}}^* (\iota_{\actionmap_*(\dd{t_1})} \vol_{e_{i_0 i_1},v})$ when $v$ is labeled with $u$;
		\item for an edge $e_{ij}$ with incoming vertex $v_0 = \partial_{in}(e_{ij})$ and outgoing vertex $v_1 = \partial_o(e_{ij})$, we let $\vol_{e_{ij},v_1} = (\tau_{l(e_{ij})})_* (\vol_{e_{ij},v_0})$ where $\tau_{l(e_{ij})}$ is the gradient flow of $\nabla f_{ij}$ for time $l(e_{ij})$. 
	\end{enumerate}
Therefore, for the outgoing edge $e_{0k}$ starting at the root vertex $v_r$ and ending at the outgoing vertex $v_o$, we obtain a differential form $\vol_{e_{0k},v_r}$ from the above construction, and we determine the sign $(-1)^{\chi(\ftree)}$ by $(-1)^{\chi(\ftree)} \vol_{e_{0k},v_r} \wedge \ast \vol_{q_{0k}} = \vol_{M}$ where $\vol_{q_{0k}}$ is the chosen volume element in $NV_{q_{0k}}^+$ for the critical point $q_{0k}$. (For the case $\tr^{[0]} = \emptyset$, we define by convention that $(-1)^{\chi(\ftree)} \ftree' \wedge \vol_{p} \wedge \ast \vol_{q} = \vol_M$ for a gradient flow line $\ftree$ from $p$ to $q$.)
	\end{definition}

\begin{definition}\label{def:morse_A_infinity_structure}
	Given a generic sequence of functions $\vec{f} = (f_0,\dots,f_k)$, with a sequence of critical points $(q_{(k-1)k},\dots,q_{01})$ we define the operation $\morseprod{k}(q_{(k-1)k},\dots,q_{01}) \in \mcpx{f_{0k}}^*[[u]]$ by extending linearly the formula
	$$
	\morseprod{k,\tr}(q_{(k-1)k},\dots,q_{01}) := \begin{dcases} \sum_{q_{0k} \in \text{Crit}(f_{0k})} \big( \sum_{\ftree \in \mathcal{M}_\tr(\vec{f},\vec{q})}  (-1)^{\chi(\ftree)} \big) q_{0k} & \text{if $\dim(\mathcal{M}_\tr(\vec{f},\vec{q})) = 0$,} \\
	0 & \text{otherwise,}
	\end{dcases}
	$$
	where $\vec{q} = ((q_{(k-1)k},\dots,q_{01},q_{0k})$. We further let $\morseprod{k} = \sum_{\tr \in \ltree{k}} u^{n_{\tr}} \morseprod{k,\tr}$ where $n_\tr = |\lvertex|$. 
	\end{definition}

We have the following Theorem \ref{thm:main_theorem} which is the main result for this paper.
\begin{theorem}\label{thm:main_theorem}
	Given a generic sequence of functions $\vec{f} = (f_0,\dots,f_k)$, with a sequence of critical points $\vec{q} = (q_{(k-1)k},\dots,q_{01},q_{0k})$, then we have
	$$
	\lim_{\lam \rightarrow \infty} \Phi \big( m_{k,\tr}(\lam)(\phi(q_{(k-1)k}),\dots,\phi(q_{01}) ) \big) = \morseprod{k,\tr}(q_{(k-1)k},\dots,q_{01}),
	$$
	where $\phi:= \Phi^{-1}$ \footnote{We omit the numbering $ij$ from our notation here.} is the inverse of the isomorphism in equation \eqref{eqn:witten_map}.
	
	As a consequence, the Morse product $\morseprod{k}$'s satisfy the $A_\infty$-relation whenever we consider a generic sequence of functions such that every operation appearing in the formula is well-defined.
	\end{theorem}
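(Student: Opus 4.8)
The plan is to evaluate the left-hand side directly, isolating the genuinely new equivariant ingredient — the operators $\eqopera_l$ attached to $u$-labelled vertices — and reducing everything else to the non-equivariant Witten--Fukaya correspondence of \cite{klchan-leung-ma}. First I would unwind $m_{k,\tr}(\lam)$ into the composite prescribed by Definition~\ref{def:operation_associated_to_labeled_tree}: place $\phi(q_{i(i+1)})\in\Omega^*_{i(i+1),<1}$ at each incoming vertex, apply the homotopy operator $H_{ij}=d^*_{ij}G_{ij}$ on each internal edge numbered $ij$, apply $\wedge$ (with the sign of $\tilde{m}_2$) at each $1$-labelled vertex, apply $\eqopera_l$ at each $u$-labelled vertex of valency $l+1$, apply $P_{0k}$ on the outgoing edge, and finally apply $\Phi_{0k}$. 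Substituting $\eqopera_l(\beta_l,\dots,\beta_1)=\int_{\simplex_l}\iota_{\dd{t_l}}\actionmap^*\beta_l\wedge\cdots\wedge\iota_{\dd{t_1}}\actionmap^*\beta_1\,dt_l\cdots dt_1$ together with the pointwise identity $\iota_{\dd{t}}\actionmap^*\beta=\actionmap_t^*(\iota_{\actionmap_*(\dd{t})}\beta)$ rewrites $\Phi\big(m_{k,\tr}(\lam)(\phi(q_{(k-1)k}),\dots,\phi(q_{01}))\big)$ as a finite sum over $q_{0k}\in\text{Crit}(f_{0k})$ of an integral over $V^-_{q_{0k}}\times\prod_{v\in\lvertex}\simplex_{\val(v)}$ of a form built from the Schwartz kernels of the $G_{ij}$, the maps $\actionmap_t$, contractions $\iota_{\actionmap_*(\dd{t})}$ and wedges; this is the $\lam$-family whose limit must be identified.

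Next I would import, from Section~\ref{sec:recalling_old_result} following \cite{klchan-leung-ma,HelSj4}, the $\lam\to\infty$ behaviour of the non-equivariant blocks: $\phi_{ij}(q)$ concentrates, with Agmon-type decay of rate $\lam$, on the oriented unstable submanifold $V^-_q$; the Schwartz kernel of $H_{ij}$ concentrates with the same rate on $\bigcup_{s\geq 0}\{(\tau_s(x),x)\}$ for $\tau$ the gradient flow of $\nabla f_{ij}$, so $H_{ij}$ transports a concentrated form along this flow; the wedge at a $1$-vertex forces the incoming concentration loci to meet, which is condition~(2) of Definition~\ref{def:equivariant_morse_counting}; and $P_{0k}$ followed by $\Phi_{0k}$ extracts the Morse sign $(-1)^{\chi}$ through the volume-form recursion of Definition~\ref{def:sign_of_morse_counting}. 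When $\tr$ has no $u$-vertex this is exactly the theorem of \cite{klchan-leung-ma}. The new point is the effect of each $\eqopera_l$: since $\actionmap$ preserves $(g,\vol)$ but not the $f_{ij}$, the pullback $\actionmap_t^*$ does not preserve the small eigenspaces and cannot be diagonalised, so instead I would show that for $\beta$ concentrated near a locus $Z$ (uniformly in $t$ over the compact $\simplex_l$) the form $\actionmap_t^*(\iota_{\actionmap_*(\dd{t})}\beta)$ is concentrated near $\actionmap_t^{-1}(Z)$ and carries exactly the contracted factor $\actionmap_t^*\iota_{\actionmap_*(\dd{t})}$ appearing in rule~(2) of Definition~\ref{def:sign_of_morse_counting} for $u$-vertices. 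Wedging these over the incoming flags of the vertex localises, as $\lam\to\infty$, onto the set where $\actionmap_{-t_l}(\text{endpoint}_l)=\cdots=\actionmap_{-t_1}(\text{endpoint}_1)$, which is precisely the jumping condition~(3); the leftover integration over $\simplex_{\val(v)}$ supplies the terms $\sum_{v}\val(v)$ and $|\tr^{[1]}|-1$ in the dimension count \eqref{eqn:dimension_of_moduli_space}.

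Assembling the blocks along $\tr$, the localized integral becomes the integral over $\mathbf{S}(\tr)\times V^-_{q_{0k}}\times V^+_{q_{(k-1)k}}\times\cdots\times V^+_{q_{01}}$ of the pullback by $\mathbf{f}_\tr$ of the $\delta$-type fundamental class of the diagonal $\mathbf{D}\subset M^{k+1}$; by genericity of $\vec{f}$ this intersection $\mathcal{M}_\tr(\vec{f},\vec{q})=\mathbf{f}_\tr^{-1}(\mathbf{D})$ is transverse of the expected dimension, so it vanishes when $\dim\mathcal{M}_\tr(\vec{f},\vec{q})<0$ and, when $\dim=0$, it equals the finite signed count $\sum_{\ftree\in\mathcal{M}_\tr(\vec{f},\vec{q})}(-1)^{\chi(\ftree)}$, the sign matching Definition~\ref{def:sign_of_morse_counting} because the volume-form recursion there mirrors exactly the transport/contraction/wedge bookkeeping performed above. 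This yields $\lim_{\lam\to\infty}\Phi\big(m_{k,\tr}(\lam)(\phi(q_{(k-1)k}),\dots,\phi(q_{01}))\big)=\morseprod{k,\tr}(q_{(k-1)k},\dots,q_{01})$. For the consequence: for each fixed $\lam$ the operations $m_k(\lam)$ form an $A_\infty$ category (Section~\ref{sec:homological_perturbation}), so transporting these relations through the isomorphisms $\Phi_{ij}$ yields the $A_\infty$ relations among the operations $q\mapsto\Phi(m_{k,\tr}(\lam)(\phi(q),\dots))$, summed over sub-trees with matching sub-sequences of $\vec{f}$; by the first part every term in these relations converges as $\lam\to\infty$ to the corresponding term built from $\morseprod{k',\tr'}$, and since this limiting sum has exactly the shape of the $A_\infty$ relation for the Morse pre-category, the $A_\infty$ relation holds in the limit for any generic $\vec{f}$ for which all the operations involved are defined.

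The step I expect to be the main obstacle is the uniform-in-$t$ analysis of the family $\actionmap_t^*$ inside $\eqopera_l$: because the Witten operators are not $S^1$-invariant, the concentration and exponential-decay estimates of \cite{klchan-leung-ma} must be re-derived for forms transported by the fixed smooth family $\actionmap_t$, $t\in\simplex_l$, and one must check that after integrating over $\simplex_l$ the limit is supported precisely on the jumping locus with the orientation factor $\actionmap_t^*\iota_{\actionmap_*(\dd{t})}$ appearing with the correct sign.
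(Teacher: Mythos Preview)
Your outline is correct and follows essentially the same strategy as the paper: localize the integral to neighbourhoods of the finitely many gradient trees with jumping via Agmon-type decay, then compute each local contribution as $(-1)^{\chi(\ftree)}$ by a leading-order (stationary-phase/WKB) calculation, with the $A_\infty$ consequence following by passing the $A_\infty$ relations for $m_k(\lam)$ through $\Phi$ and taking $\lam\to\infty$.

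Two implementation choices in the paper are worth flagging, since they streamline exactly the step you single out as the obstacle. First, rather than integrating over $V^-_{q_{0k}}$ via the definition of $\Phi$, the paper extracts the $q_{0k}$-coefficient by pairing with the dual eigenform $\ast e^{-2\lam f_{0k}}\phi_{0k}/\|e^{-\lam f_{0k}}\phi_{0k}\|^2$, which itself has a WKB expansion concentrated on $V^-_{q_{0k}}$; this keeps the entire computation inside the WKB framework and avoids a separate ``integrate a concentrated form over an unstable manifold'' argument. Second, the uniform-in-$t$ issue is handled not by re-deriving estimates for the family $\actionmap_t^*$, but by promoting the simplex coordinates $\mathbf{t}_v\in\simplex_{\val(v)}$ to genuine spatial variables: the paper builds a tree-distance function $\tdist_\tr$ on $\simplex_\tr\times M^{|\tr^{[0]}|}$, introduces cut-offs $\vec\varkappa$ on the simplices alongside cut-offs $\vec\chi$ on $M$, and runs the inductive WKB expansion for $\mathfrak{m}^{(e,v)}_{\vec\chi,\vec\varkappa}$ on product neighbourhoods $\vec{\mathbf{E}}_{\tr_{e,v}}\times W_{e,v}$, so that the $\actionmap_t$-dependence is absorbed into an ordinary Bott--Morse stationary-phase problem with extra parameters rather than treated as a separate perturbation.
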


\section{Proof of Theorem \ref{thm:main_theorem}}\label{sec:proof_of_theorem}

\subsection{Analytic results}\label{sec:recalling_old_result}
For the proof of Theorem \ref{thm:main_theorem}, we assume $\tr^{[0]} \neq \emptyset$ since this is exactly the case carried out by \cite{HelSj4}. We begin with recalling the necessary analytic results from \cite{HelSj4, zhang, klchan-leung-ma}. 

\subsubsection{Results for a single Morse function}
We will assume that the function $f_{ij}$ we are dealing with satisfy the Morse-Smale assumption \ref{def:morsesmale}. Due to difference in convention, $e^{-\lam f_{ij}} \Delta_{ij} e^{\lam f_{ij}}$ is called the Witten's Laplacian in \cite{klchan-leung-ma}, and result stated in this Section is obtain by the corresponding statements in \cite{klchan-leung-ma} by conjugating $e^{\lam f_{ij}}$.

\begin{theorem}[\cite{HelSj4, zhang}]\label{thm:witten_map_iso}
	For each $f_{ij}$, there is $\lam_0>0$ and constants $c, C>0$ such that we have
	$
	\Spec(\Delta_{ij})\cap [ce^{-c\lam}, C\lam^{1/2}) = \emptyset,$
	for $\lam>\lam_0$. The map $\Phi = \Phi_{ij} : \Omega^*_{ij,<1} \rightarrow \mcpx{f_{ij}}^*$ in equation \eqref{eqn:witten_map} is a chain isomorphism for $\lam$ large enough. We will denote the inverse by $\phi = \phi_{ij}$.
\end{theorem}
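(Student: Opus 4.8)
This is the classical Witten--Helffer--Sjöstrand--Bismut--Zhang package, and the plan is to follow \cite{HelSj4, zhang} in the form used in \cite{klchan-leung-ma}; below I sketch the three inputs and indicate where the work lies. The first step is to reduce $\Delta_{ij}$ to the standard Witten Laplacian. Write $f := f_{ij}$ and let $U := e^{-\lam f}$ act by multiplication; then $U$ is a unitary isomorphism from $\Omega^*(M)$ with the twisted inner product attached to $\vol_{ij}$ onto $\Omega^*(M)$ with the ordinary $L_2$ inner product, and a direct computation gives $UdU^{-1} = d + \lam\, df\wedge(\cdot) =: d_\lam$ and $Ud^*_{ij}U^{-1} = d^* + \lam\,\iota_{\nabla f} = d_\lam^*$, so $U\Delta_{ij}U^{-1} = d_\lam d_\lam^* + d_\lam^* d_\lam =: \Box_\lam$ is the usual Witten Laplacian, satisfying the Bochner--Weitzenböck identity $\Box_\lam = \Delta + \lam^2|\nabla f|^2 + \lam\,\mathcal{L}_f$ with $\mathcal{L}_f$ a self-adjoint bundle endomorphism of $\bigwedge^*T^*M$ determined by $\nabla^2 f$. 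Since $U$ is unitary, every spectral claim for $\Delta_{ij}$ is equivalent to the same claim for $\Box_\lam$, and $\Phi_{ij}$ may be analysed through $\Box_\lam$ after conjugation.

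\emph{Step 2: the spectral gap.} I would localize by an IMS-type partition of unity $1 = \psi_0^2 + \sum_{p\in\text{Crit}(f)}\psi_p^2$, with $\psi_p$ supported in a small Morse-coordinate ball around $p$ and $\psi_0$ supported where $|\nabla f|\geq\delta>0$, using $\Box_\lam = \sum_a(\psi_a\Box_\lam\psi_a - |\nabla\psi_a|^2)$. On $\supp\psi_0$ the Weitzenböck identity gives $\Box_\lam\geq\lam^2\delta^2 - C\lam\geq c_1\lam$ for $\lam$ large. On $\supp\psi_p$, rescaling $x\mapsto x/\sqrt\lam$ shows $\Box_\lam = \lam(\Box_0^{(p)} + O(\lam^{-1/2}))$, where $\Box_0^{(p)}$ is the model Witten Laplacian of the quadratic Morse form at $p$ --- an explicit sum of shifted harmonic oscillators with spectrum $\{0\}\cup[2,\infty)$ and one-dimensional kernel spanned by a Gaussian times $dx_{I_p}$, a form of degree $\operatorname{ind}(p)$, where $I_p$ indexes the negative directions. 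Min--max then yields two conclusions: first, for any small $\epsilon>0$ the operator $\Box_\lam$ has at most $b := \#\text{Crit}(f)$ eigenvalues $\leq\epsilon\lam$ once $\lam$ is large, so its $(b{+}1)$-st eigenvalue is $\geq c_2\lam\geq C\lam^{1/2}$; second, cutting off the model Gaussians produces $b$ almost-orthonormal approximate eigenforms $\tilde\alpha_p$ with $\|\Box_\lam\tilde\alpha_p\|\leq Ce^{-c\lam}\|\tilde\alpha_p\|$ (the cutoff derivatives are supported where the Gaussians are $O(e^{-c\lam})$), so the first $b$ eigenvalues are $\leq Ce^{-c\lam}$. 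Together these give $\Spec(\Box_\lam)\cap[Ce^{-c\lam}, C\lam^{1/2}) = \emptyset$, which is the first assertion. For $\lam$ large ($Ce^{-c\lam}<1<C\lam^{1/2}$) the space $\Omega^*_{ij,<1}$ is therefore exactly the $b$-dimensional span of the genuine small eigenforms $\alpha_p$; since $\Box_\lam$ preserves form degree, $\alpha_p$ sits in degree $\operatorname{ind}(p)$, whence $\dim\Omega^q_{ij,<1} = \#\{p : \operatorname{ind}(p) = q\} = \dim\mcpx{f_{ij}}^q$.

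\emph{Step 3: $\Phi_{ij}$ is a chain isomorphism.} That $\Phi_{ij}$ intertwines $d$ with the Morse differential $\delta_1$ follows from Stokes' theorem together with the Morse--Smale compactification $\overline{V_p^-}\setminus V_p^- = \bigcup_q(\text{flow lines }p\to q)\times V_q^-$, giving $\int_{V_p^-}d\alpha = \int_{\partial\overline{V_p^-}}\alpha = \sum_q n(p,q)\int_{V_q^-}\alpha$. Since the two complexes now have equal finite dimension in each degree, invertibility reduces to nondegeneracy of the matrix $\big(\int_{V_q^-}\alpha_p\big)$. For degree reasons the entry vanishes unless $\operatorname{ind}(p) = \operatorname{ind}(q)$. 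When $p=q$, in Morse coordinates $\alpha_p$ is to leading order a normalized Gaussian times $dx_{I_p}$ while $V_p^-$ is $C^1$-tangent to the plane $\{x_j=0 : j\notin I_p\}$, so $\int_{V_p^-}\alpha_p$ tends to a nonzero Gaussian integral. When $p\neq q$ with $\operatorname{ind}(p) = \operatorname{ind}(q)$, Morse--Smale forbids any (possibly broken) trajectory from $q$ to $p$, so $p\notin\overline{V_q^-}$ and $\int_{V_q^-}\alpha_p = O(e^{-c\lam})$ by concentration of $\alpha_p$ near $p$. Hence the matrix is (nonzero scalar)$\cdot\mathrm{Id} + O(e^{-c\lam})$ in each degree and is invertible for $\lam$ large, so $\Phi_{ij}$ is an isomorphism and we set $\phi_{ij} := \Phi_{ij}^{-1}$.

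\emph{Main obstacle.} The real content is the precise WKB/Agmon description of the true small eigenforms $\alpha_p$ --- exponential concentration near the critical points with a controlled leading Gaussian profile, and control of off-diagonal tunneling --- which powers both the exponentially small upper bound in Step 2 and the nondegeneracy estimate in Step 3. This is exactly the analysis carried out in \cite{HelSj4, zhang, klchan-leung-ma}, which I would invoke rather than reprove.
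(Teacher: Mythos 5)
The paper offers no proof of this statement: it is simply quoted from \cite{HelSj4, zhang} (transported to the present conventions by conjugating with $e^{\lam f_{ij}}$, as remarked at the start of Section \ref{sec:recalling_old_result}), and your sketch is precisely that classical Helffer--Sj\"ostrand/Zhang argument --- the same unitary conjugation, IMS localization with harmonic-oscillator models for the spectral gap, Stokes' theorem on the compactified unstable manifolds for the chain-map property, and diagonal dominance of the pairing matrix --- with the hard WKB/Agmon estimates deferred to the same references, so it is essentially the same approach. The one imprecision worth flagging is that the matrix entries involve the $\Delta_{ij}$-eigenforms (i.e.\ $e^{\lam f_{ij}}$ times your conjugated Gaussian-type eigenforms), whose decay is governed by $\tdist_{ij}$ with zero set the stable manifold $V_p^+$, so the off-diagonal exponential smallness really rests on $\overline{V_q^-}\cap V_p^+=\emptyset$ (which Morse--Smale and the index count do give) rather than merely $p\notin\overline{V_q^-}$; this is exactly what the estimates you invoke from \cite{HelSj4, zhang, klchan-leung-ma} supply.
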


We will the asymptotic behaviour of $\phi(q)$ for a critical point $q$ of $f_{ij}$, and we will need the following Agmon distance $\dist_{ij}$ for this purpose.

\begin{definition}\label{def:agmondistance}
	For a Morse function $f_{ij}$, the Agmon distance $\dist_{ij}$ \footnote{Readers may see \cite{HelNi} for its basic properties.}, or simply denoted by $\dist$, is the distance function with respect to the degenerated Riemannian metric $\langle \cdot, \cdot \rangle_{f_{ij}} = |df_{ij}|^2 \langle \cdot, \cdot \rangle$, where $\langle \cdot, \cdot \rangle$ is the background metric. We will also write $\tdist_{ij}(x,y) := \dist_{ij}(x,y) - f_{ij}(y) + f_{ij}(x)$. 
\end{definition}

\begin{lemma}\label{lem:agmon_dist_flow_line}
	We have $\tdist_{ij}(x,y) \geq 0$ with equality holds if and only if $x$ is connected to $y$ via a generalized flow line $\gamma : [0,1] \rightarrow M$ with $\gamma(0) = x$ and $\gamma(1) = y$. Here a generalized flow line means that $\gamma$ is continuous, and there is a partition $0=t_0 <t_1< \cdots <t_l = 1$ such that $\gamma|_{(t_r,t_{r+1})}$ is a reparameterization of a gradient flow line of $f_{ij}$ and $\gamma(t_r) \in Crit(f_{ij})$ for $0<r<l$. 
\end{lemma}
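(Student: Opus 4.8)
The plan is to use the standard description of the Agmon distance as an infimal length, $\dist_{ij}(x,y)=\inf_{\gamma}\int_0^1 |df_{ij}(\gamma(s))|\,|\dot\gamma(s)|\,ds$ over absolutely continuous paths $\gamma\colon[0,1]\to M$ with $\gamma(0)=x$ and $\gamma(1)=y$, together with the pointwise inequality $(f_{ij}\circ\gamma)'(s)=\langle\nabla f_{ij}(\gamma(s)),\dot\gamma(s)\rangle\le|\nabla f_{ij}(\gamma(s))|\,|\dot\gamma(s)|$. Integrating over $[0,1]$ gives $f_{ij}(y)-f_{ij}(x)\le\int_0^1|df_{ij}(\gamma)|\,|\dot\gamma|\,ds$ for every competitor $\gamma$; taking the infimum yields $\dist_{ij}(x,y)\ge f_{ij}(y)-f_{ij}(x)$, i.e.\ $\tdist_{ij}(x,y)\ge 0$. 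The same computation shows that equality is governed by the defect $|\nabla f_{ij}(\gamma)|\,|\dot\gamma|-\langle\nabla f_{ij}(\gamma),\dot\gamma\rangle$: by the equality case of Cauchy--Schwarz, making it small forces $\dot\gamma$ to be (nearly) a nonnegative multiple of $\nabla f_{ij}(\gamma)$, which is the source of the flow-line characterization.

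For the ``if'' direction, suppose $\gamma$ is a generalized flow line from $x$ to $y$. After reparametrization $\dot\gamma$ is a nonnegative multiple of $\nabla f_{ij}(\gamma)$ on each $(t_r,t_{r+1})$, so Cauchy--Schwarz is an equality there and $f_{ij}$ telescopes along the partition: $\int_0^1|df_{ij}(\gamma)|\,|\dot\gamma|\,ds=\sum_r\big(f_{ij}(\gamma(t_{r+1}))-f_{ij}(\gamma(t_r))\big)=f_{ij}(y)-f_{ij}(x)$. Here I would check that such a $\gamma$ is an admissible competitor: an integral curve of $\nabla f_{ij}$ limiting to a critical point has finite Agmon length, equal to the corresponding difference of values of $f_{ij}$, and can be made Lipschitz by a suitable time change, and a finite concatenation of such pieces through critical points remains absolutely continuous. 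Hence $\dist_{ij}(x,y)\le f_{ij}(y)-f_{ij}(x)$, so with the lower bound we conclude $\tdist_{ij}(x,y)=0$. (The degenerate case $x=y$ is immediate.)

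For the ``only if'' direction, assume $\tdist_{ij}(x,y)=0$, equivalently $\dist_{ij}(x,y)=f_{ij}(y)-f_{ij}(x)$. Since $(M,\dist_{ij})$ is a compact length space, a length-minimizing curve $\gamma\colon[0,1]\to M$ from $x$ to $y$ exists (Hopf--Rinow for length spaces, or see \cite{HelNi}); parametrized by Agmon arclength it is absolutely continuous. Then $\int_0^1\big(|\nabla f_{ij}(\gamma)|\,|\dot\gamma|-\langle\nabla f_{ij}(\gamma),\dot\gamma\rangle\big)\,ds=\dist_{ij}(x,y)-\big(f_{ij}(y)-f_{ij}(x)\big)=0$ with a nonnegative integrand, so $\dot\gamma(s)\in\real_{\ge 0}\cdot\nabla f_{ij}(\gamma(s))$ for a.e.\ $s$; in particular $f_{ij}\circ\gamma$ is nondecreasing. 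On the open set $U=\{s:\nabla f_{ij}(\gamma(s))\neq 0\}$, after discarding the parameters where $\dot\gamma=0$ (which collapse to points), $\gamma$ is, up to reparametrization, an integral curve of $\nabla f_{ij}$, while $[0,1]\setminus U$ maps into $\mathrm{Crit}(f_{ij})$. Since $f_{ij}\circ\gamma$ is strictly increasing on each connected component of $U$ and locally constant off $U$, the values of $f_{ij}$ at the endpoints of successive components strictly increase; as there are only finitely many critical values, there are finitely many components, and on each ``gap'' between them the image of $\gamma$ lies in the finite set $\mathrm{Crit}(f_{ij})$, hence $\gamma$ is constant there. Collapsing these constant gaps yields a partition $0=t_0<\cdots<t_l=1$ with $\gamma|_{(t_r,t_{r+1})}$ a reparametrized gradient flow line and $\gamma(t_r)\in\mathrm{Crit}(f_{ij})$ for $0<r<l$, i.e.\ a generalized flow line from $x$ to $y$.

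The main obstacle is the analysis underlying the ``only if'' direction: producing a minimizing curve with enough regularity (absolute continuity and an a.e.\ defined velocity) despite the degeneracy of the Agmon metric on $\mathrm{Crit}(f_{ij})$, and then passing rigorously from ``$\dot\gamma$ parallel to $\nabla f_{ij}$ a.e.'' to an honest finite concatenation of gradient flow lines joined at critical points. The delicate point is the behaviour of $\gamma$ near critical points --- it must enter and leave along integral curves and cannot linger or oscillate in a way producing infinitely many pieces --- and it is precisely the monotonicity of $f_{ij}\circ\gamma$ together with the finiteness of $\mathrm{Crit}(f_{ij})$ that rules this out. The remaining estimates and the ``if'' direction are routine once the variational set-up is in place; all of this is classical in the Helffer--Sj{\"o}strand theory of the Agmon distance (cf.\ \cite{HelNi}).
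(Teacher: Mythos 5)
The paper never proves this lemma --- it is invoked as a standard property of the Agmon distance with a pointer to \cite{HelNi} --- so there is no in-paper argument to compare against; your proof is the classical Helffer--Sj\"ostrand/Helffer--Nier one and it is essentially correct: the lower bound by Cauchy--Schwarz, the ``if'' direction by computing the Agmon length of a concatenation of gradient segments (noting each segment has finite Agmon length equal to the increment of $f_{ij}$), and the ``only if'' direction by taking a minimizer whose velocity is a.e.\ a nonnegative multiple of $\nabla f_{ij}$ and then cutting it at critical points. The only genuinely delicate point is the one you flag yourself: regularity of a minimizer for the degenerate metric near $\mathrm{Crit}(f_{ij})$, and the passage from ``velocity parallel to $\nabla f_{ij}$ a.e.'' to a \emph{finite} concatenation; for the latter, the cleanest justification of your finiteness claim is that $\gamma$ is constant on each connected component of $\gamma^{-1}(\mathrm{Crit}(f_{ij}))$ (a connected set mapping into a finite set), so each complementary open interval is a full reparametrized flow line joining two points of $\mathrm{Crit}(f_{ij})\cup\{x,y\}$, whence the increment of $f_{ij}$ across each such interval is bounded below by the minimal gap between the finitely many values $f_{ij}(\mathrm{Crit}(f_{ij}))\cup\{f_{ij}(x),f_{ij}(y)\}$, so only finitely many intervals can occur. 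With that observation made explicit, your sketch matches the standard proof the paper is implicitly relying on.
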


\begin{lemma}\label{lem:resolventestimate}
	Let $\gamma \subset \comp$ to be a subset whose distance from $Spec(\Delta_{ij})$ is bounded below by a constant. For any $j\in \inte_+$ and $\epsilon>0$, there is $k_j \in \inte_+$ and $\lam_0 = \lam_0(\epsilon)>0$ such that for any two points $x_0, y_0 \in M$, there exist neighborhoods $V$ and $U$ (depending on $\epsilon$) of $x_0$ and $y_0$ respectively, and $C_{j,\epsilon}>0$ such that
	$
	\| \nabla^j ((z-\Delta_{ij})^{-1}u)\|_{C^{0}(V)} \leq C_{j,\epsilon} e^{-\lam(\tdist_{ij}(x_0,y_0)-\epsilon)} \| u\|_{W^{k_j,2}(U)},
	$
	for all $\lam>\lam_0$ and $u \in C^0_c(U)$, where $W^{k,p}$ refers to the Sobolev norm.
\end{lemma}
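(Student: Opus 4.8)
The plan is to recognize the estimate as the Helffer--Sj\"ostrand resolvent decay bound for the Witten Laplacian and to prove it in two moves: first a conjugation that puts $\Delta_{ij}$ into the symmetric normalization used in \cite{klchan-leung-ma}, and then the standard Agmon weighted-energy argument. \emph{Step 1 (reduction to the symmetric Witten Laplacian).} By the convention remark preceding Theorem \ref{thm:witten_map_iso}, conjugating $\Delta_{ij}$ by $e^{\lam f_{ij}}$ produces the symmetric Witten Laplacian $\square_{ij}$ of \cite{klchan-leung-ma}, which satisfies a Weitzenb\"ock formula $\square_{ij} = \nabla^*\nabla + \lam^2 |df_{ij}|^2 + \lam\,B_{ij}$ with $B_{ij}$ a bundle endomorphism affine in $\lam$ (up to a $\lam$-independent curvature term). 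Rewriting $(z - \Delta_{ij})w = u$ as $(z - \square_{ij})(e^{-\lam f_{ij}}w) = e^{-\lam f_{ij}}u$, the two factors $e^{\pm\lam f_{ij}}$ picked up in passing between $\Delta_{ij}$ and $\square_{ij}$ convert the \emph{symmetric} Agmon weight of \cite{klchan-leung-ma} into exactly the twisted weight $e^{-\lam(\tdist_{ij}(x_0,y_0)-\epsilon)}$ of the statement, using that $\tdist_{ij}(x,y) = \dist_{ij}(x,y) + f_{ij}(x) - f_{ij}(y)$ and that $\dist_{ij}$ is symmetric (Definition \ref{def:agmondistance}). Hence it suffices to prove the bound for $\square_{ij}$ with the symmetric weight; since $\dist_{ij}$ is the Agmon distance of the degenerate metric $|df_{ij}|^2\langle\cdot,\cdot\rangle$, this is precisely the estimate established in \cite{klchan-leung-ma} by adapting \cite{HelSj4}, and may simply be quoted. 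The following sketches the argument.

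\emph{Step 2 (weighted energy estimate).} Given $y_0$ and $\epsilon>0$, choose a Lipschitz weight $\psi = \psi_{\epsilon,y_0}$ on $M$ with $\psi(y_0) = 0$, with $|d\psi|^2 \le (1-\delta)|df_{ij}|^2$ for some $\delta(\epsilon)\in(0,1)$ off a fixed family of $\epsilon$-balls about $\text{Crit}(f_{ij})$ on which $\psi$ is taken locally constant, and with $\psi \ge \dist_{ij}(\cdot,y_0) - \epsilon/2$ outside a neighborhood of $y_0$; such a $\psi$ is produced by regularizing $(1-\delta)\dist_{ij}(\cdot,y_0)$ and flattening near the critical points of $f_{ij}$. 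For $w'$ solving $(z-\square_{ij})w' = u'$ with $u'$ supported near $y_0$, integrating by parts against $e^{2\lam\psi}w'$ and invoking the Weitzenb\"ock formula gives, for $\lam$ large,
$$
\| \nabla(e^{\lam\psi}w') \|_{L^2}^2 + \big\langle \big( \lam^2(|df_{ij}|^2 - |d\psi|^2) - C\lam - \Realpart z \big) e^{\lam\psi}w', e^{\lam\psi}w' \big\rangle \le \Realpart\big\langle (z-\square_{ij})w', e^{2\lam\psi}w' \big\rangle .
$$
By the choice of $\psi$ the potential coefficient is $\ge \delta\lam^2|df_{ij}|^2 - C\lam - \Realpart z$, which is $\gtrsim \lam^2$ off the small balls about $\text{Crit}(f_{ij})$ and $\supp(u')$; combined with the resolvent bound $\| w'\|_{L^2} \le C(\text{dist}(z,\Spec(\Delta_{ij})))^{-1}\|u'\|_{L^2}$ (valid on $\gamma$ by hypothesis, using Theorem \ref{thm:witten_map_iso} and $\Spec(\square_{ij})=\Spec(\Delta_{ij})$) and a Cauchy--Schwarz absorption, this yields $\|e^{\lam\psi}w'\|_{L^2} \le Ce^{\epsilon\lam}\|u'\|_{L^2}$. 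Restricting to a neighborhood $V$ of $x_0$ on which $\psi \ge \dist_{ij}(x_0,y_0) - \epsilon$, undoing the conjugation, and localizing $u'$ to a neighborhood $U$ of $y_0$ gives the $L^2 \to L^2$ form of the claim with the twisted weight.

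\emph{Step 3 (critical points and the regularity upgrade).} The main obstacle is $\text{Crit}(f_{ij})$: there $|df_{ij}|$ vanishes, the Agmon metric degenerates, and the positivity $\lam^2|df_{ij}|^2 - C\lam$ fails on balls of radius $\lesssim \lam^{-1/2}$. This is precisely why $\psi$ must be taken strictly sub-unit-Lipschitz and flattened near $\text{Crit}(f_{ij})$, and why the loss $-\epsilon$ in the exponent appears. The contribution of those small balls is absorbed using the a priori lower bound $\square_{ij} - \Realpart z \gtrsim \lam^{1/2}$ on the orthogonal complement of the small-eigenvalue subspace, together with the already-established Agmon decay of the small eigenforms (Theorem \ref{thm:witten_map_iso}, \cite{HelSj4, zhang}); for differential forms there is the further, milder point that the zeroth-order term $\lam B_{ij}$ is only $O(\lam)$ and not sign-definite, handled the same way. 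Finally, to pass from the $L^2 \to L^2$ bound to the stated form $\|\nabla^j(\cdot)\|_{C^0(V)} \lesssim \|\cdot\|_{W^{k_j,2}(U)}$, apply interior elliptic estimates for the second-order elliptic operator $z - \Delta_{ij}$ (bounded geometry, coefficients of size $O(\lam^2)$): iterating elliptic/Schauder inequalities on a nested family of neighborhoods shrinking onto $x_0$ bounds $\|\nabla^j((z-\Delta_{ij})^{-1}u)\|_{C^0(V)}$ by a polynomial-in-$\lam$ multiple of $\|(z-\Delta_{ij})^{-1}u\|_{L^2(V')} + \|u\|_{W^{k_j,2}(U)}$, with $k_j$ fixed by the Sobolev embedding $W^{k_j,2}\hookrightarrow C^0$ on the $n$-manifold; the polynomial factors in $\lam$ are harmless, being absorbed into $C_{j,\epsilon}$ after relabelling $\epsilon$. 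Since all these ingredients are standard in the Helffer--Sj\"ostrand framework, in the write-up I would keep the conjugation reduction of Step 1 explicit and refer to \cite{klchan-leung-ma, HelSj4} for the core estimate.
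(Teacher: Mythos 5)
Your proposal matches the paper's treatment: the paper offers no proof of this lemma, recalling it from \cite{klchan-leung-ma} (ultimately \cite{HelSj4}) with precisely the remark that it follows from the symmetric-convention statement there by conjugating with $e^{\lam f_{ij}}$ --- which is your Step 1 --- and your Steps 2--3 correctly sketch the Agmon weighted-energy argument that lives inside the cited reference. The one piece of bookkeeping to pin down is the direction of the conjugation: the kernel of $(z-\Delta_{ij})^{-1}=e^{\lam f_{ij}}(z-\square_{ij})^{-1}e^{-\lam f_{ij}}$ picks up the factor $e^{\lam(f_{ij}(x_0)-f_{ij}(y_0))}$, so the symmetric weight $e^{-\lam \dist_{ij}(x_0,y_0)}$ becomes $e^{-\lam\tdist_{ij}(y_0,x_0)}$ rather than $e^{-\lam\tdist_{ij}(x_0,y_0)}$ as literally written (the function $\tdist_{ij}$ is not symmetric, and the latter order is in fact the one consistent with how the estimate is later applied along flow lines), so you should verify which ordering of the arguments your reduction actually delivers rather than asserting it matches the displayed formula.
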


We will also need modified version of the resolvent estimate for $G_{ij}$, which can be obtained by applying the original resolvent estimate to the the formula
\begin{equation}
G_{ij}(u) = \oint_{\gamma} z^{-1}(z-\Delta_{ij})^{-1} u.
\end{equation}

\begin{lemma}\label{lem:resolventlemma}
	For any $j\in \inte_+$ and $\epsilon>0$, there is $k_j \in \inte_+$ and $\lam_0 = \lam_0(\epsilon)>0$ such that for any two points $x_0, y_0 \in M$, there exist neighborhoods $V$ and $U$ (depending on $\epsilon$) of $x_0$ and $y_0$ respectively, and $C_{j,\epsilon}>0$ such that $
	\| \nabla^j (G_{ij}u)\|_{C^{0}(V)} \leq C_{j,\epsilon} e^{-\lam(\tdist_{ij}(x_0,y_0)-\epsilon)} \| u\|_{W^{k_j,2}(U)}$,
	for all $\lam<\lam_0$ and $u \in C^0_c(U)$, where $W^{k,p}$ refers to the Sobolev norm.
\end{lemma}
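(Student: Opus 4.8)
The plan is to write $G_{ij}$ as a contour integral of the resolvent $(z-\Delta_{ij})^{-1}$ and then integrate the pointwise‑in‑$z$ estimate supplied by Lemma~\ref{lem:resolventestimate}; the one genuinely delicate point is to set this up over a \emph{bounded} curve, so that convergence is automatic. Fix $\lam$ large enough that Theorem~\ref{thm:witten_map_iso} holds, so $\Spec(\Delta_{ij})\subset[0,ce^{-c\lam})\cup[C\lam^{1/2},\infty)$ with the left‑hand part a \emph{finite} set, and let $\gamma=\{z\in\comp:|z|=\tfrac12\}$ with its standard orientation (any fixed circle inside the spectral gap works equally well); then $\operatorname{dist}(\gamma,\Spec(\Delta_{ij}))\ge\tfrac14$ uniformly in $\lam$. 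Recall that $G_{ij}$ inverts $\Delta_{ij}$ on the orthogonal complement $\Omega^*_{ij,\ge1}$ of the small eigenspace and vanishes on $\Omega^*_{ij,<1}$, i.e.\ $G_{ij}=\sum_{\mu\ge C\lam^{1/2}}\mu^{-1}E_\mu$ in terms of the eigenprojections $E_\mu$ of $\Delta_{ij}$. A residue computation on each eigenspace then gives the identity of bounded operators on $L^2\Omega^*(M)$
\[
G_{ij}=-\frac{1}{2\pi i}\oint_\gamma z^{-1}(z-\Delta_{ij})^{-1}\,dz ,
\]
since for $\mu<\tfrac12$ the residues of $z^{-1}(z-\mu)^{-1}$ at $z=0$ and $z=\mu$ cancel (and $\oint_\gamma z^{-2}\,dz=0$ when $\mu=0$), whereas for $\mu\ge C\lam^{1/2}$ only the residue $-\mu^{-1}$ at $z=0$ lies inside $\gamma$; one justifies the identity by checking it on the dense span of the eigenforms, both sides being bounded.

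Now fix $j\in\inte_+$ and $\epsilon>0$. Applying Lemma~\ref{lem:resolventestimate} with the set $\gamma$ — whose hypothesis is exactly that $\operatorname{dist}(\gamma,\Spec(\Delta_{ij}))$ is bounded below, uniformly in $\lam$ — yields $k_j\in\inte_+$, $\lam_0=\lam_0(\epsilon)$, and for any $x_0,y_0\in M$ neighbourhoods $V\ni x_0$, $U\ni y_0$ and $C_{j,\epsilon}>0$ such that for all $z\in\gamma$, all $\lam>\lam_0$ and all $u$ supported in $U$ with $\|u\|_{W^{k_j,2}(U)}<\infty$ (the natural reading of ``$u\in C^0_c(U)$'', since that norm already occurs on the right)
\[
\bigl\|\nabla^j\bigl((z-\Delta_{ij})^{-1}u\bigr)\bigr\|_{C^0(V)}\le C_{j,\epsilon}\,e^{-\lam(\tdist_{ij}(x_0,y_0)-\epsilon)}\,\|u\|_{W^{k_j,2}(U)} .
\]
Enlarging $k_j$ if necessary so that $W^{k_j+2,2}(M)\hookrightarrow C^j(M)$, the map $z\mapsto(z-\Delta_{ij})^{-1}u$ is continuous on the compact curve $\gamma$ with values in $C^j(M)$, so one may interchange $\nabla^j$ with $\oint_\gamma$, restrict to $V$, and take $C^0(V)$‑norms; using $|z|^{-1}=2$ and $\operatorname{length}(\gamma)=\pi$ on $\gamma$ this gives
\[
\|\nabla^j(G_{ij}u)\|_{C^0(V)}\le\frac{1}{2\pi}\oint_\gamma|z|^{-1}\,\bigl\|\nabla^j((z-\Delta_{ij})^{-1}u)\bigr\|_{C^0(V)}\,|dz|\le C_{j,\epsilon}\,e^{-\lam(\tdist_{ij}(x_0,y_0)-\epsilon)}\,\|u\|_{W^{k_j,2}(U)} ,
\]
which is the claimed bound (for $\lam>\lam_0$).

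The main obstacle, as indicated, lies entirely in the first step: a contour ``enclosing the large part'' $[C\lam^{1/2},\infty)$ of the spectrum would be unbounded, and on, say, the line $\{\operatorname{Re}z=1\}$ the weight $|z|^{-1}\|(z-\Delta_{ij})^{-1}\|$ decays only like $|z|^{-1}\operatorname{dist}(z,\Spec)^{-1}$, which is not integrable against $|dz|$. If one insists on such a contour this is repaired by iterating $z^{-1}(z-\Delta_{ij})^{-1}=z^{-N-1}(z-\Delta_{ij})^{-1}\Delta_{ij}^N+\sum_{l=2}^{N+1}c_l z^{-l}$, discarding the exact terms $z^{-l}$ with $l\ge2$ (whose contour integrals vanish) and applying Lemma~\ref{lem:resolventestimate} to $\Delta_{ij}^N u$, which is still supported in $U$ with $\|\Delta_{ij}^N u\|_{W^{k_j-2N,2}(U)}\lesssim\|u\|_{W^{k_j,2}(U)}$, so as to gain the extra factor $|z|^{-N-1}$ — but the bounded‑contour choice above makes this unnecessary, and one must also make sure (as is already the case) that the constants produced by Lemma~\ref{lem:resolventestimate} are uniform over $z\in\gamma$. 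Finally, since $G_{ij}$ and the resolvent act coefficientwise on the formal power series appearing in the morphism spaces, it suffices to prove the estimate for a single differential form, which is what is done above; I expect no further difficulties.
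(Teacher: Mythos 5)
Your proposal is correct and follows essentially the same route as the paper, which simply invokes the contour-integral formula $G_{ij}=\oint_{\gamma}z^{-1}(z-\Delta_{ij})^{-1}$ together with Lemma~\ref{lem:resolventestimate}. You merely supply the details the paper leaves implicit --- the choice of a bounded contour inside the spectral gap guaranteed by Theorem~\ref{thm:witten_map_iso}, the residue verification, and the uniformity of the constants over $z\in\gamma$ --- and correctly read the statement's ``$\lam<\lam_0$'' as a typo for $\lam>\lam_0$.
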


For a critical point $q$ of $f_{ij}$, $\phi(q)$, has certain exponential decay measured by the Agmon distance from the critical point $q$.

\begin{lemma}\label{lem:eigenestimate2}
	For any $\epsilon$, there exists $\lam_0=\lam_0(\epsilon)>0$ such that for $\lam>\lam_0$, we have
	$
	\phi(q) = \mathcal{O}_{\epsilon}(e^{-\lam(\g^+_q(x)-\epsilon)}),
	$
	and same estimate holds for the derivatives of $\phi_{ij}(q)$ as well. Here $\mathcal{O}_\epsilon$ refers to the dependence of the constant on $\epsilon$ and $\g^+_q(x) =\tdist_{ij}(q,x) =  \dist_{ij}(q,x)+f_{ij}(q) - f_{ij}(x)$.
\end{lemma}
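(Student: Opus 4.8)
\emph{Plan.} The approach is to extend the analytic scheme of \cite{klchan-leung-ma}, which proves the statement for subtrees all of whose internal vertices are labelled $1$ (the original Fukaya conjecture), by inserting the exponentially localised eigenform representatives $\phi(q_{i(i+1)})$ of Lemma \ref{lem:eigenestimate2} at the leaves of $\tr$ and transporting this localisation through the operations attached to $\tr$ down to the root; the new ingredient is the treatment of the equivariant vertex operations $\eqopera_l$ at $u$-labelled vertices. We first reduce by degree. Since the Witten map $\Phi$ is grading-preserving (the Morse index of $p$ equals $\dim V_p^-$) and $\phi(q_{i(i+1)})$ has form-degree $\deg(q_{i(i+1)})$, while each internal edge of $\tr$ carries the degree $-1$ operator $H_{ij}=d^*_{ij}G_{ij}$ and each $u$-vertex $v$ carries the degree $-\val(v)$ operator $\eqopera_{\val(v)}$, the form $m_{k,\tr}(\lam)(\phi(q_{(k-1)k}),\dots,\phi(q_{01}))$ has degree $\sum_i\deg(q_{i(i+1)}) - (|\tr^{[1]}|-1) - \sum_{v\in\lvertex}\val(v)$. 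Comparing with the dimension formula \eqref{eqn:dimension_of_moduli_space}, the $q_{0k}$-coefficient of $\Phi$ of this form vanishes identically unless $\dim\mathcal{M}_\tr(\vec f,\vec q)=0$, which is also the only case in which $\morseprod{k,\tr}(q_{(k-1)k},\dots,q_{01})$ has nonzero $q_{0k}$-coefficient; and for $\tr^{[0]}=\emptyset$ the claim is the identification of $d|_{\Omega^*_{ij,<1}}$ with the Morse differential furnished by Theorem \ref{thm:witten_map_iso}. It therefore remains to compute, for $\dim\mathcal{M}_\tr(\vec f,\vec q)=0$,
$$
\lim_{\lam\to\infty}\ \int_{V_{q_{0k}}^-} m_{k,\tr}(\lam)\big(\phi(q_{(k-1)k}),\dots,\phi(q_{01})\big),
$$
the final projection $P_{0k}$ being absorbed into the pairing up to exponentially small error by Theorem \ref{thm:witten_map_iso}.

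Next I would prove an inductive localisation statement along $\tr$: for each flag $(e,v)$ with $e$ numbered $ij$, the partial composition $\psi_{e,v}(\lam)\in\Omega^*(M)$ obtained by running the tree operation on the subtree $\tr_{e,v}$ satisfies $\psi_{e,v}(\lam)=O_\epsilon\big(e^{-\lam(w_{e,v}-\epsilon)}\big)$ together with all its derivatives, where $w_{e,v}\colon M\to[0,\infty)$ is built inductively: at a leaf, $w\equiv\g^+_{q_{i(i+1)}}=\tdist_{i(i+1)}(q_{i(i+1)},\cdot)$ by Lemma \ref{lem:eigenestimate2}; across an internal edge, $H_{ij}=d^*_{ij}G_{ij}$ transports the localisation forward along the gradient flow of $f_{ij}$ and replaces $w$ by its infimal convolution with $\tdist_{ij}$, controlled by the Green-operator resolvent estimate of Lemma \ref{lem:resolventlemma}; at a trivalent $1$-vertex the wedge product adds the two incoming weights; and at a $u$-vertex $v$ of valency $l=\val(v)$ with incoming weights $w_{e_1,v},\dots,w_{e_l,v}$ arranged anti-clockwise,
$$
w_{e_{v,o},v}(y)\ =\ \inf_{(t_1,\dots,t_l)\in\simplex_l}\ \sum_{d=1}^{l} w_{e_d,v}\big(\actionmap_{t_d}(y)\big),
$$
obtained by viewing $\eqopera_l$, after the pullbacks $\actionmap^*$ and the contractions $\iota_{\partial_{t_d}}$, as a Laplace-type integral over $\simplex_l$ with exponentially concentrated integrand. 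Every $w_{e,v}$ is non-negative, and by Lemma \ref{lem:agmon_dist_flow_line} each $\tdist_{ij}$-summand vanishes exactly on reparametrised gradient flow lines of $f_{ij}$; tracing the recursion, the weight $w_{e_o,v_r}$ at the root together with the $\tdist_{0k}$-contribution produced by pairing against $V_{q_{0k}}^-$ assembles into a single non-negative phase on $V_{q_{0k}}^-\times V_{q_{(k-1)k}}^+\times\cdots\times V_{q_{01}}^+\times\mathbf{S}(\tr)$ whose zero locus is precisely the set of configurations satisfying conditions (1)--(3) of Definition \ref{def:equivariant_morse_counting}, i.e. $\mathcal{M}_\tr(\vec f,\vec q)=\mathbf{f}_\tr^{-1}(\mathbf{D})$ in the sense of Definition \ref{def:moduli_space_as_intersection}.

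Under the genericity hypothesis on $\vec f$ and with $\dim\mathcal{M}_\tr(\vec f,\vec q)=0$, this phase has finitely many critical points, all nondegenerate, one for each $\ftree\in\mathcal{M}_\tr(\vec f,\vec q)$, with no spurious contributions from the boundary strata of $\prod_{v}\simplex_{\val(v)}\times\mathcal{S}(\tr)$ (those correspond to tree-splittings in $\partial\overline{\mathcal{S}(\tr)}$ and to the faces of the simplices, excluded in expected dimension zero by genericity). A Laplace expansion then gives that the limit above equals $\sum_{\ftree\in\mathcal{M}_\tr}\varepsilon(\ftree)$, and it remains to match the leading coefficient $\varepsilon(\ftree)$ with $(-1)^{\chi(\ftree)}$ of Definition \ref{def:sign_of_morse_counting}. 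This is a sign-careful but essentially mechanical comparison: the transverse profile of $\phi(q_{i(i+1)})$ is the normal volume element $\vol_{q_{i(i+1)}}$ of $V_{q_{i(i+1)}}^+$; the pushforward induced by $H_{ij}$ localises to $(\tau_{l(e_{ij})})_*$; the wedge at $1$-vertices is the wedge of Definition \ref{def:sign_of_morse_counting}(2); the contractions $\iota_{\partial_{t_d}}$ against the $\simplex_l$-directions at a $u$-vertex become the $\iota_{\actionmap_*(\partial_{t_d})}$ together with the pullbacks $\actionmap_{t_d}^*$; and the final pairing realises $\vol_{e_{0k},v_r}\wedge\ast\vol_{q_{0k}}=\vol_M$. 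When $\dim\mathcal{M}_\tr(\vec f,\vec q)>0$ the degree count of the first paragraph forces both sides to vanish. Finally, for every $\lam$ large enough that Theorem \ref{thm:witten_map_iso} applies to all the finitely many $f_{ij}$ involved, $m_k(\lam)=\sum_{\tr\in\ltree{k}}u^{n_\tr}m_{k,\tr}(\lam)$ satisfies the $A_\infty$ relations (Section \ref{sec:homological_perturbation}), hence so does the transported structure $\Phi\circ m_k(\lam)\circ(\phi\otimes\cdots\otimes\phi)$ on $\mcpx{\bullet}[[u]]$; the $A_\infty$ relations are finite polynomial identities in the structure constants, which converge as $\lam\to\infty$ to those of $\morseprod{k}$ by what has just been proved, so $\morseprod{k}$ satisfies them for every generic sequence on which all the relevant operations are defined.

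\emph{Main obstacle.} I expect the crux to be the inductive step across the $u$-vertices and its coefficient counterpart: because the $\mathbb{S}^1$-action preserves the metric but not the functions $f_{ij}$ (hence not the Agmon metrics $|df_{ij}|^2\langle\cdot,\cdot\rangle$), the weights are genuinely moved by the diffeomorphisms $\actionmap_{t_d}$, and one must show that $\eqopera_l$ applied to exponentially localised forms is again exponentially localised with the stated weight, that the $\simplex_l$-integral has isolated nondegenerate critical points under genericity with no boundary leakage, and that the contraction--pullback combinatorics of $\eqopera_l$ reproduces exactly the signed volume-form recursion of Definition \ref{def:sign_of_morse_counting}. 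Keeping the $O_\epsilon$ estimates uniform through the (bounded) number of compositions along $\tr$ by choosing the $\epsilon$'s adaptively is a further technical point but presents no essential difficulty.
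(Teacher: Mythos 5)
Your proposal does not address the statement it is supposed to prove. Lemma \ref{lem:eigenestimate2} is the Agmon-type pointwise decay estimate for the small-eigenvalue eigenforms $\phi(q)$ of the Witten Laplacian $\Delta_{ij}$: that $\phi(q)$ and all its derivatives are $\mathcal{O}_\epsilon(e^{-\lam(\tdist_{ij}(q,x)-\epsilon)})$. What you have written is an outline of a proof of Theorem \ref{thm:main_theorem}, and it explicitly takes Lemma \ref{lem:eigenestimate2} as an input (``inserting the exponentially localised eigenform representatives $\phi(q_{i(i+1)})$ of Lemma \ref{lem:eigenestimate2} at the leaves of $\tr$''). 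Read as an argument for the lemma itself, this is circular; read charitably, it is simply a proof of a different claim, and nothing in it establishes the exponential localisation of $\phi(q)$.

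For the record, the paper does not reprove this lemma either: it is one of the analytic facts recalled from \cite{HelSj4} (see also \cite{zhang}), transported to the present convention by conjugating the operator of \cite{klchan-leung-ma} with $e^{\lam f_{ij}}$, which is exactly what turns the bare Agmon distance $\dist_{ij}(q,x)$ into the weight $\g^+_q(x)=\dist_{ij}(q,x)+f_{ij}(q)-f_{ij}(x)$. A self-contained proof would be of a completely different nature from your outline: one combines the spectral gap of Theorem \ref{thm:witten_map_iso} and the harmonic-oscillator approximation (which localises the $\mathcal{O}(e^{-c\lam})$-eigenvalue subspace near the critical points) with an Agmon energy estimate, i.e.\ testing the eigenvalue equation against $e^{2(1-\delta)\lam\g^+_q}\phi(q)$ and using the eikonal inequality $|\nabla\dist_{ij}(q,\cdot)|\le|df_{ij}|$ for the degenerate metric $|df_{ij}|^2\langle\cdot,\cdot\rangle$ to absorb the potential term of $\Delta_{ij}$; the derivative bounds then follow from elliptic estimates on balls of radius $\mathcal{O}(\lam^{-1/2})$. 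None of these steps, nor any substitute for them, appears in your proposal, so as a proof of Lemma \ref{lem:eigenestimate2} it has to be rejected.
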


\begin{remark}\label{rem:eigenwkbremark}
	We notice that $\g^+_{q} $ is a nonnegative function with zero set $V_{q}^+$ that is smooth and Bott-Morse in a neighborhood $W$ of $V_{q}^+ \cup V_q^-$. Similarly, if we write $\g^-_{q} = \dist_{ij}(q,x) + f_{ij}(x) - f_{ij}(q)$ which is a nonnegative function with zero set $V_{q}^-$ and is smooth and Bott-Morse in $W$, and we have $\ast_{ij} \phi(q)/\|\phi(q) e^{-\lam f_{ij}}\|^2 =   \mathcal{O}_{\epsilon}(e^{-\lam(\g^-_{q}-\epsilon)})$ where $\ast_{ij} =\ast e^{-2\lam f_{ij}}$ comparing to the usual star operator $\ast$.
	\end{remark}

\begin{lemma}
	The normalized basis $\phi(q)/\|\phi(q)\|$'s are almost orthonormal basis with respect to the twisted inner product $\langle \cdot,\cdot \rangle e^{-2 \lam f_{ij}}$. More precisely, there is a $C,c>0$ and $\lam_0$ such that when $\lam > \lam_0$, we will have
	$
	\int_M \langle \frac{\phi(p)}{\|\phi(p)\|} , \frac{\phi(q)}{\|\phi(q)\|} \rangle \vol_{ij} = \delta_{pq} + Ce^{-c\lam}.
	$
\end{lemma}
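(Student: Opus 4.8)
The plan is to estimate the overlap $\int_M \langle \phi(p), \phi(q) \rangle \vol_{ij}$ directly using the WKB-type exponential decay of $\phi(q)$ established in Lemma \ref{lem:eigenestimate2}, splitting into the diagonal case $p = q$ and the off-diagonal case $p \neq q$. First I would recall that, by Lemma \ref{lem:eigenestimate2} and Remark \ref{rem:eigenwkbremark}, one has the pointwise bounds $|\phi(q)(x)| = \mathcal{O}_\epsilon(e^{-\lam(\g^+_q(x) - \epsilon)})$ with $\g^+_q$ a nonnegative function vanishing exactly on $V_q^+$ and Bott-Morse near $V_q^+ \cup V_q^-$; in particular $\phi(q)$ is exponentially concentrated, in the Agmon sense, along the unstable cell $V_q^-$ — more precisely the $L_2$-mass of $\phi(q)$ with respect to $\vol_{ij} = e^{-2\lam f_{ij}}\vol$ localizes near $q$ itself, since the weight $e^{-2\lam f_{ij}(x)}$ times $e^{-2\lam \g^+_q(x)} = e^{-2\lam(\dist_{ij}(q,x) + f_{ij}(q) - f_{ij}(x))}$ equals $e^{-2\lam(\dist_{ij}(q,x) + f_{ij}(q))}$, which is maximized (as a function of $x$) at $x = q$ and decays like $e^{-2\lam \dist_{ij}(q,x)}$ away from $q$. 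A standard Laplace/Bott-Morse asymptotic expansion of this integral over the normal directions to $V_q^+ \cap V_q^-$ (a point, by transversality in Definition \ref{def:morsesmale}) then gives $\|\phi(q) e^{-\lam f_{ij}}\|^2 = \|\phi(q)\|^2_{\vol_{ij}}$ up to a leading Gaussian factor, establishing the diagonal entry is $1$ after normalization with an error that is in fact of the stated $Ce^{-c\lam}$ type once one is careful that $\Phi_{ij}\phi(q) = q$ forces the normalization constants to match the Morse pairing exactly in the limit.

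For the off-diagonal term with $p \neq q$, I would combine the two estimates: $|\langle \phi(p)(x), \phi(q)(x)\rangle| e^{-2\lam f_{ij}(x)} = \mathcal{O}_\epsilon\big( e^{-2\lam(\g^+_p(x) + \g^+_q(x) + f_{ij}(x) - \epsilon)} \big)$, and $\g^+_p(x) + \g^+_q(x) + f_{ij}(x) = \dist_{ij}(p,x) + \dist_{ij}(q,x) + f_{ij}(p) + f_{ij}(q) - f_{ij}(x)$. The key point is to show that, after dividing by the normalizations $\|\phi(p)\|\,\|\phi(q)\|$ whose logarithmic rates are governed by the minimal values of $\g^+_p + f_{ij}$ resp. $\g^+_q + f_{ij}$ (attained at $p$, resp. $q$, with values $f_{ij}(p)$, resp. $f_{ij}(q)$ — here I use that $\dist_{ij}(p,p) = 0$), the integrand is exponentially small uniformly in $x$. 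Concretely, the exponent one needs to be positive is
\begin{equation*}
\inf_{x \in M}\Big( \dist_{ij}(p,x) + \dist_{ij}(q,x) - f_{ij}(x) \Big) - \big(- \max_x f_{ij} \text{-type normalization terms}\big),
\end{equation*}
and by the triangle inequality for the Agmon distance $\dist_{ij}(p,x) + \dist_{ij}(q,x) \geq \dist_{ij}(p,q) > 0$ whenever $p \neq q$ (strict positivity because $p$ and $q$ are distinct critical points, so no generalized flow line of length zero connects them — this is Lemma \ref{lem:agmon_dist_flow_line}). Balancing this against the $f_{ij}(x)$ term and the normalization, the net exponential rate is bounded below by a positive constant $c$ (roughly $c = \dist_{ij}(p,q) - |f_{ij}(p) - f_{ij}(q)| = \tdist_{ij}(p,q) > 0$ or a comparable quantity, after a careful bookkeeping of which point contributes the dominant normalization), so the off-diagonal entry is $\mathcal{O}(e^{-c\lam})$, and choosing $\epsilon$ small enough relative to this $c$ absorbs the $\epsilon$-loss in Lemma \ref{lem:eigenestimate2}. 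Alternatively — and this is probably cleaner — one observes that $\Phi_{ij}$ is a chain isomorphism (Theorem \ref{thm:witten_map_iso}) and that $\phi(q)$ is, up to normalization, the harmonic-ish representative concentrated at $q$; the Gram matrix of the $\phi(q)$'s in the twisted inner product is then a perturbation of the identity, and one only needs the quantitative rate, which is exactly what the Agmon estimates above supply.

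The main obstacle I anticipate is the careful matching of the normalization constants $\|\phi(q)\|$ (the untwisted $L_2$-norm) versus $\|\phi(q) e^{-\lam f_{ij}}\|$ (the twisted one appearing inside the integral), and ensuring that after dividing the off-diagonal overlap by $\|\phi(p)\|\|\phi(q)\|$ rather than by $\|\phi(p) e^{-\lam f_{ij}}\|\|\phi(q) e^{-\lam f_{ij}}\|$ one still gets a genuinely negative exponential rate and not merely an $o(1)$ bound. This requires knowing the precise leading asymptotics (a power of $\lam$ times $e^{-2\lam f_{ij}(q)}$, say) of $\|\phi(q)\|^2_{\vol_{ij}}$ — obtainable from the Bott-Morse Laplace expansion around $q$ using Remark \ref{rem:eigenwkbremark}'s identification of the zero set and nondegeneracy of $\g^\pm_q$ — and then checking that the polynomial prefactors cancel in the ratio so that the $e^{-c\lam}$ decay genuinely survives. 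A secondary, more bookkeeping-level obstacle is handling the $u$-dependence: since the inner product is taken on $\Omega^*_{ij,<1}(M)$ and the statement as written concerns forms (no $u$), I would either note that the argument is identical coefficient-by-coefficient in $u$, or restrict attention to the $u = 0$ part, which is what the displayed formula literally involves.
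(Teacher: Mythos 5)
Since the paper only recalls this lemma from \cite{HelSj4, zhang, klchan-leung-ma} without proof, your outline should be measured against the standard Helffer--Sj\"ostrand argument, which it follows in spirit; however, the off-diagonal step contains a genuine error. Your bound on the integrand uses the weight $e^{-2\lam f_{ij}}$ only once: applying Lemma \ref{lem:eigenestimate2} to both factors gives $|\langle\phi(p),\phi(q)\rangle|\,e^{-2\lam f_{ij}}\lesssim_{\epsilon} e^{-\lam(\g^+_p+\g^+_q+2f_{ij}-2\epsilon)}$, and $\g^+_p(x)+\g^+_q(x)+2f_{ij}(x)=\dist_{ij}(p,x)+\dist_{ij}(q,x)+f_{ij}(p)+f_{ij}(q)$, so the $f_{ij}(x)$ term cancels identically and there is nothing to ``balance''. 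Your version retains a stray $-f_{ij}(x)$, and that is what leads you to the candidate rate $\tdist_{ij}(p,q)$ and to the claim that it is positive ``by Lemma \ref{lem:agmon_dist_flow_line}''. This claim is false, and the cited lemma says the opposite: $\tdist_{ij}(p,q)=0$ exactly when a generalized flow line joins $p$ to $q$, which is the typical situation for distinct critical points (it is what the Morse differential counts). The correct rate, after dividing by the twisted norms --- whose exponential size is $e^{-\lam f_{ij}(p)}$, resp.\ $e^{-\lam f_{ij}(q)}$, up to powers of $\lam$, with the needed lower bound coming from Lemmas \ref{lem:eigenwkb}--\ref{lem:eigenwkbcal} --- is the plain Agmon distance $\dist_{ij}(p,q)$. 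Its strict positivity for $p\neq q$ does not follow from Lemma \ref{lem:agmon_dist_flow_line} but from the fact that the degenerate metric $|df_{ij}|^2\langle\cdot,\cdot\rangle$ vanishes only at critical points, so any path leaving a small ball around $p$ has Agmon length bounded below by a positive constant. With this fixed, taking $\epsilon<\dist_{ij}(p,q)/2$ and absorbing the polynomial prefactors yields the stated $Ce^{-c\lam}$; as written, your argument produces no positive exponent at all.

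The normalization issue you flag as your ``main obstacle'' must be resolved rather than hedged, and it interacts with the above. The statement is only consistent if $\|\cdot\|$ denotes the norm of the twisted inner product $\langle\cdot,\cdot\rangle e^{-2\lam f_{ij}}$ itself (written $\|e^{-\lam f_{ij}}\phi(q)\|$ elsewhere in the paper): then the diagonal entry is exactly $1$, your Laplace-expansion discussion for $p=q$ is unnecessary, and it is precisely this reading under which the $e^{-\lam(f_{ij}(p)+f_{ij}(q))}$ factors cancel in the off-diagonal ratio, leaving the rate $\dist_{ij}(p,q)$. If instead $\|\phi(q)\|$ meant the untwisted $L^2$ norm, the diagonal would equal $\|e^{-\lam f_{ij}}\phi(q)\|^2/\|\phi(q)\|^2$, which differs from $1$ by factors of order $\lam^{a}e^{-2\lam f_{ij}(q)}$ (the untwisted mass spreads along $V^+_q$ while the twisted mass concentrates at $q$), so the lemma would be false in that reading; the appeal to $\Phi_{ij}\phi(q)=q$ ``forcing the constants to match'' does not repair this, and your alternative argument via $\Phi_{ij}$ being a chain isomorphism carries no quantitative content and cannot replace the Agmon estimates.
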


Restricting our attention to a small enough neighborhood $W$ containing $V^+_{q} \cup V^-_{q}$, the above decay estimate of  $\phi(q)$ from \cite{HelSj4} can be improved from an error of order $\mathcal{O}_\epsilon(e^{\epsilon \lam})$ to $\mathcal{O}(\lam^{-\infty})$. 
\begin{lemma}\label{lem:eigenwkb}
	There is a WKB approximation of the $\phi(q)$ as 
	$
	\phi(q) \sim \lam^{\frac{\deg(q)}{2}} e^{-\lam \g^+_{q}}( \omega_{q,0}+\omega_{q,1}\lam^{-1/2}+\dots) \footnote{Notice that we indeed have $\omega_{q,2j+1} = 0$ in this case while we prefer to write it in this form to unify our notations.},
	$
	which is an approximation in any precompact open subset $K\subset W_q$ of the form
	$$
	\|e^{ \lam \g^+_q} \nabla^j \big( \lam^{-\deg(q)/2} \phi(q) - e^{-\lam \g^+_q }\sum_{l=0}^{N} \omega_{q,j} \lam^{-l/2}\big)\|^2_{L^\infty(K)} \leq C_{j,K,N} \lam^{-N-1+2j}
	$$
	for any $j,N \in \inte_+$, where $W_q \supset V_q^+ \cup V_q^-$ is an open neighborhood of $V_q^+ \cup V_q^-$.
\end{lemma}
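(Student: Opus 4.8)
\textbf{Proof proposal for Lemma \ref{lem:eigenwkb}.}

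The plan is to construct the WKB ansatz directly and show it solves the Witten eigenvalue equation up to the claimed order, then invoke the uniqueness of small eigenforms (Theorem \ref{thm:witten_map_iso}) to identify the ansatz with $\phi(q)$ up to an $\mathcal{O}(\lam^{-\infty})$ error. First I would work in the neighborhood $W_q \supset V_q^+ \cup V_q^-$, where by Remark \ref{rem:eigenwkbremark} the phase function $\g^+_q$ is smooth, nonnegative, vanishing exactly on $V_q^+$, and Bott-Morse there; in particular $|d\g^+_q|^2 = |df_{ij}|^2 = 2 \langle d\g^+_q, df_{ij}\rangle$ holds wherever $\g^+_q$ is smooth (this is the eikonal equation for the Agmon metric, which $\g^+_q$ satisfies since it is the Agmon potential $\tdist_{ij}(q,\cdot)$ away from its zero set). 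I would then plug the ansatz $\psi_N := \lam^{\deg(q)/2} e^{-\lam \g^+_q}\sum_{l=0}^{N}\omega_{q,l}\lam^{-l/2}$ into the conjugated operator $e^{\lam \g^+_q}\Delta_{ij} e^{-\lam \g^+_q}$. Expanding in powers of $\lam$, the leading $\lam^2$-term vanishes precisely by the eikonal equation; the $\lam^{3/2}$ and $\lam^1$ transport equations then become first-order linear ODEs along the (rescaled) gradient flow of $f_{ij}$ for the successive coefficients $\omega_{q,l}$, with a well-posed initial condition imposed on $V_q^+$ coming from the normal-bundle volume element that defines $\Phi_{ij}$ via \eqref{eqn:witten_map}. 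Because $\g^+_q$ is only Bott-Morse (not Morse) one solves these transport equations on $W_q$ with $V_q^+$ as the "caustic-free" initial hypersurface, and standard Bott-Morse stationary-phase bookkeeping gives that odd coefficients vanish, justifying the footnote.

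Next I would estimate the residual: by construction $e^{\lam \g^+_q}(\Delta_{ij} - E_\lam)\psi_N = \mathcal{O}(\lam^{-N-1+\text{const}})$ in $C^\infty_{\mathrm{loc}}(W_q)$ for the small eigenvalue $E_\lam = \mathcal{O}(e^{-c\lam})$ from Theorem \ref{thm:witten_map_iso} (in fact one can take $E_\lam$ to be the genuine small eigenvalue and absorb it, since $e^{-c\lam}$ beats any polynomial). One must also control the behavior near $\partial W_q$: cut off $\psi_N$ by a function supported in $W_q$ equal to $1$ on the given precompact $K$, and observe that the cutoff error is supported where $\g^+_q$ is bounded below by a positive constant, hence exponentially small after multiplication by $e^{\lam \g^+_q}$ on $K$ — wait, more carefully, the cutoff commutator lives away from $K$, so estimating its contribution to the eigenform on $K$ uses the resolvent decay estimate Lemma \ref{lem:resolventestimate} with the Agmon distance, giving an $\mathcal{O}(e^{-c\lam})$ bound there too. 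Then the difference $\lam^{-\deg(q)/2}\phi(q) - \chi \cdot e^{-\lam\g^+_q}\sum_l \omega_{q,l}\lam^{-l/2}$, after applying $G_{ij}$ (bounded by $\mathcal{O}(\lam^{1/2})$ on small eigenspace complement and by orthogonality on the eigenspace) together with the resolvent bound Lemma \ref{lem:resolventlemma}, satisfies the stated $C_{j,K,N}\lam^{-N-1+2j}$ estimate; the $2j$ loss in the exponent is the usual price of differentiating an oscillatory/exponential factor $j$ times. Matching normalization with $\phi(q) = \Phi_{ij}^{-1}(q)$ is fixed by requiring $\int_{V_q^-}\psi_N \to 1$, i.e. $\int_{V_q^-} e^{-\lam\g^+_q}\omega_{q,0} \sim \lam^{-\deg(q)/2}$ after Gaussian integration in the normal directions to $V_q^+$ along $V_q^-$, which pins down $\omega_{q,0}$ on $V_q^+$ (it is essentially the square root of the Hessian determinant transverse to $V_q^+$ times the chosen orientation form), and then propagates by the transport ODE.

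The main obstacle I anticipate is the Bott-Morse degeneracy of the phase $\g^+_q$ and the interplay between the two "directions" near the critical manifold-like set $V_q^+\cup V_q^-$: $\g^+_q$ is genuinely smooth only on the neighborhood $W_q$, and the transport equations degenerate along $V_q^+$ where the flow of $\nabla f_{ij}$ is tangent to the zero set, so one has to set up the ODEs transversally and check that the solutions extend smoothly across $V_q^+$ — this is where the precise Bott-Morse hypothesis from Remark \ref{rem:eigenwkbremark} is essential. A secondary technical point is keeping track of the polynomial-in-$\lam$ bookkeeping so that the exponent $-N-1+2j$ comes out exactly as stated; this is routine but must be done consistently with the convention $e^{-\lam f_{ij}}\Delta_{ij} e^{\lam f_{ij}}$ versus $\Delta_{ij}$ noted at the start of Section \ref{sec:recalling_old_result}. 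Everything else — the eikonal/transport hierarchy, the use of Lemmas \ref{lem:resolventestimate}–\ref{lem:resolventlemma} to localize, and the spectral gap from Theorem \ref{thm:witten_map_iso} — is standard WKB/Witten-Laplacian analysis in the style of \cite{HelSj4}.
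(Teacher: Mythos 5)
The paper contains no proof of this lemma: it is stated in the ``Analytic results'' subsection as a recalled fact, quoted from \cite{HelSj4} and \cite{klchan-leung-ma} (see the sentence immediately preceding it), so there is no internal argument to compare with. Your outline --- WKB ansatz, eikonal equation, transport hierarchy, cut-off plus spectral-gap/Agmon comparison, normalization through $\int_{V_q^-}$ --- is indeed the standard Helffer--Sj\"ostrand route that those references follow, so the overall architecture is the right one.

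Two concrete problems, though. First, the eikonal identity you invoke is wrong as stated: $|d\g^+_q|^2=|df_{ij}|^2$ is false for $\g^+_q=\tdist_{ij}(q,\cdot)$, since $\g^+_q$ vanishes to second order along $V_q^+$, so $d\g^+_q=0$ there while $df_{ij}\neq 0$ away from $q$. The genuine Agmon eikonal equation is $|d\,\dist_{ij}(q,\cdot)|^2=|df_{ij}|^2$, and in the paper's convention (untwisted $d$, $d^*_{ij}=d^*+2\lam\iota_{\nabla f_{ij}}$, so that $\Delta_{ij}$ is conjugate to the usual Witten Laplacian by $e^{\lam f_{ij}}$) the equation that kills the $\lam^2$ term of $\Delta_{ij}\bigl(e^{-\lam\g^+_q}\omega\bigr)$ is $|d\g^+_q|^2+2\langle d\g^+_q,df_{ij}\rangle=0$, which is what the Agmon eikonal implies for $\g^+_q=\dist_{ij}(q,\cdot)+f_{ij}(q)-f_{ij}$; with your version the leading term does not cancel. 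You have mixed the two conventions --- precisely the pitfall you flag at the end of your own proposal. (Similarly, the spectral gap gives $\|G_{ij}\|=\mathcal{O}(\lam^{-1/2})$ on the orthogonal complement of the small eigenspace, not $\mathcal{O}(\lam^{1/2})$; harmless but symptomatic.) Second, the two steps you treat as routine are where the cited proofs do their real work: (a) the expansion must first be produced near the critical point $q$ from the local model and then \emph{propagated along the gradient flow} to be valid on a whole neighborhood $W_q$ of $V_q^+\cup V_q^-$ --- ``solve the transport ODEs with data on $V_q^+$'' hides exactly this propagation (the eigenform analogue of Lemma \ref{lem:homotopywkb}); and (b) identifying the cut-off ansatz with $\phi(q)$ requires more than uniqueness of small eigenforms: one must show the spectral projection alters the ansatz only by $\mathcal{O}(\lam^{-\infty})$ on compact subsets of $W_q$ (spectral gap plus the resolvent/Agmon estimates of Lemmas \ref{lem:resolventestimate}--\ref{lem:resolventlemma}), and that its pairings $\int_{V_p^-}$ with the other critical points $p\neq q$ are negligible, using the almost-orthonormality of the $\phi(p)$'s. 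With the corrected eikonal equation and these two steps filled in, your proposal coincides with the argument in the references the paper relies on.
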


Furthermore, the integral of the leading order term $\omega_{q,0}$ in the normal direction to the stable submanifold $V_q^+$ is computed in \cite{HelSj4}. 
\begin{lemma}\label{lem:eigenwkbcal}
	Fixing any point $x \in V_{q}^+$ and $\chi \equiv 1$ around $x$ compactly supported in $W$, we take any closed submanifold (possibly with boundary) $NV_{q,x}^+$ of $W$ intersecting transversally with $V_q^+$ at $x$. We have
	\begin{equation*}
	\lam^{\frac{\deg(q)}{2}}\int_{NV_{q,x}^+} e^{-\lam \g^+_{q}}\chi \omega_{q,0} = 1+\mathcal{O}(\lam^{-1}); \quad 
	\frac{\lam^{\frac{\deg(q)}{2}}}{\|e^{-\lam f_{ij}} \phi_{ij}(q) \|^2}\int_{NV_{q,x}^-} e^{-\lam \g^-_{q}}\chi \ast \omega_{q,0} = 1+\mathcal{O}(\lam^{-1}), 
	\end{equation*}
	for any point $x \in V_{q}^-$, with $NV_{q,x}^-$ intersecting transversally with $V_q^-$.
\end{lemma}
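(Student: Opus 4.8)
\textbf{Proof proposal for Lemma \ref{lem:eigenwkbcal}.}
The plan is to reduce both identities to a local model computation near the chosen point $x$, exploiting the WKB expansion of Lemma \ref{lem:eigenwkb} together with the fact that $\Phi_{ij}$ in \eqref{eqn:witten_map} is a (chain) isomorphism for large $\lambda$ (Theorem \ref{thm:witten_map_iso}). First I would record the defining normalization of $\phi(q)$: by construction $\phi(q) = \Phi_{ij}^{-1}(q)$, so $\Phi_{ij}(\phi(q)) = q$, which unwinds to $\int_{V_p^-}\phi(q) = \delta_{pq}$ for all $p \in \mathrm{Crit}(f_{ij})$. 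In particular $\int_{V_q^-}\phi(q) = 1$. The strategy is to deform the cycle of integration: $V_q^-$ is noncompact but $e^{-\lambda \g^+_q}$ decays away from its zero set $V_q^+$, and $V_q^- \cap V_q^+ = \{q\}$, so by a Stokes/decay argument the integral of $\phi(q)$ over $V_q^-$ localizes, up to $\mathcal{O}(\lambda^{-\infty})$ error, to an integral over any small transversal slice; replacing $V_q^-$ by $NV_{q,x}^+$ (which meets $V_q^+$ transversally at a nearby point $x$ and is compactly supported in $W$) and inserting the cutoff $\chi$ costs only exponentially small terms because $\g^+_q > 0$ off $V_q^+$ and $\chi \equiv 1$ near $x$. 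Then I would substitute the WKB expansion $\phi(q) \sim \lambda^{\deg(q)/2} e^{-\lambda \g^+_q}(\omega_{q,0} + \omega_{q,1}\lambda^{-1/2} + \cdots)$ from Lemma \ref{lem:eigenwkb}, valid uniformly on the precompact slice, and integrate term by term; the $L^\infty$-error bounds in Lemma \ref{lem:eigenwkb} guarantee that the tail contributes $\mathcal{O}(\lambda^{-1})$ after the Gaussian normalization in the normal directions. This yields $\lambda^{\deg(q)/2}\int_{NV_{q,x}^+} e^{-\lambda \g^+_q}\chi\,\omega_{q,0} = 1 + \mathcal{O}(\lambda^{-1})$, the first identity.

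For the second identity I would argue dually. Applying the Hodge star $\ast_{ij} = \ast e^{-2\lambda f_{ij}}$ exchanges the roles of $V_q^+$ and $V_q^-$: from Remark \ref{rem:eigenwkbremark}, $\ast_{ij}\phi(q)/\|\phi(q) e^{-\lambda f_{ij}}\|^2 = \mathcal{O}_\epsilon(e^{-\lambda(\g^-_q - \epsilon)})$, and a WKB expansion for $\ast \omega_{q,0}$ governs the decay transverse to $V_q^-$. The normalization to use is the harmonicity/closedness of $\phi(q)$ under $\Delta_{ij}$ (so $d\phi(q) = 0$ and $d_{ij}^*\phi(q)=0$, hence $\ast_{ij}\phi(q)$ is also $d$-closed after the appropriate conjugation): pairing $\phi(q)$ with itself gives $\|e^{-\lambda f_{ij}}\phi_{ij}(q)\|^2 = \int_M \langle \phi(q),\phi(q)\rangle \vol_{ij}$, and by the same localization argument this $L^2$ mass concentrates near $q$, where both $\g^+_q$ and $\g^-_q$ vanish, splitting as a product of a normal integral along $V_q^+$ (which is $1+\mathcal{O}(\lambda^{-1})$ by the first identity) and a normal integral along $V_q^-$. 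Dividing through, the integral $\lambda^{\deg(q)/2}\|e^{-\lambda f_{ij}}\phi_{ij}(q)\|^{-2}\int_{NV_{q,x}^-} e^{-\lambda \g^-_q}\chi \ast \omega_{q,0}$ is forced to equal $1+\mathcal{O}(\lambda^{-1})$. Alternatively — and perhaps more cleanly — one observes that the pairing $\int_{V_q^-}\phi(q) = 1$ can be rewritten using the metric as $\int_M \phi(q)\wedge \ast_{ij}(\text{delta-like current along }V_q^-)$ and compared directly with the WKB leading terms on both slices; the two approaches give the same constant.

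The main obstacle I anticipate is making the localization/deformation of the cycle rigorous: $V_q^-$ is an unstable manifold whose closure may be singular (it is only an immersed submanifold with complicated boundary behavior under the Morse–Smale hypothesis), so one cannot naively apply Stokes' theorem on $\overline{V_q^-}$. The clean way around this is to not deform $V_q^-$ at all, but instead to use the already-established fact (Lemma \ref{lem:eigenestimate2} and Remark \ref{rem:eigenwkbremark}) that $\phi(q)$ and its derivatives decay like $e^{-\lambda(\g^+_q - \epsilon)}$, so the contribution of $\int_{V_q^-}\phi(q)$ from outside any fixed neighborhood of $q$ is $\mathcal{O}(e^{-c\lambda})$; inside that neighborhood $V_q^-$ is a genuine embedded disc transverse to $V_q^+$, so it is itself an admissible choice of $NV_{q,x}^+$-slice (with $x = q$), and the WKB expansion applies verbatim. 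Handling a general transversal slice $NV_{q,x}^+$ at a nearby point $x \neq q$ then follows because the leading WKB form $\omega_{q,0}$ restricted to the Gaussian normal model is parallel-transported along $V_q^+$ by the flow, so the normal integral is independent of the base point $x \in V_q^+$ up to $\mathcal{O}(\lambda^{-1})$. The remaining bookkeeping — tracking the power $\lambda^{\deg(q)/2}$ against the $\deg(q)$-dimensional Gaussian integral $\int_{\mathbb{R}^{\deg(q)}} e^{-\lambda|y|^2}(\cdots)$, and checking the orientation/volume-element conventions fixed in \eqref{eqn:witten_map} — is routine and I would not belabor it.
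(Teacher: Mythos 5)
The paper itself offers no proof of this lemma: it is recalled, with the surrounding material of Section \ref{sec:recalling_old_result}, from \cite{HelSj4} (see also \cite{zhang, klchan-leung-ma}), where it comes out of the explicit construction of the WKB quasi-modes --- the amplitude $\omega_{q,0}$ is produced by solving transport equations along the gradient flow, and the transverse fiber integral is an invariant of that transport, normalized at $q$. So I am judging your argument on its own terms. Your overall route --- obtain the normalization at $x=q$ from $\int_{V_q^-}\phi(q)=1$ (which is just $\Phi(\phi(q))=q$), localize to a neighborhood of $q$ using the Agmon decay of Lemma \ref{lem:eigenestimate2}, observe that $V_q^-$ near $q$ is itself an admissible transversal slice, insert the expansion of Lemma \ref{lem:eigenwkb}, and then propagate to an arbitrary point $x$ and slice --- is a sensible substitute, and your decision to localize via decay rather than deform the cycle $V_q^-$ is the right way around the non-compactness of the unstable manifold (though you should still say a word about why the tail integral converges, since $V_q^-$ may have infinite volume, so a pointwise bound $\mathcal{O}_\epsilon(e^{-\lambda(c-\epsilon)})$ alone does not finish it).

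The genuine gaps are in the propagation step and in the dual identity. First, your stated justification, ``harmonicity of $\phi(q)$, so $d\phi(q)=0$ and $d^*_{ij}\phi(q)=0$'', is false: $\phi(q)$ lies in the small-spectrum subspace $\Omega^*_{ij,<1}$, not in $\ker\Delta_{ij}$, and since $\Phi$ is a chain map one has $d\phi(q)=\phi(\delta q)$, which is typically nonzero. Consequently the ``parallel transport along $V_q^+$'' claim --- which is exactly what separates the identity at $x=q$ from the identity at every $x\in V_q^+$, and likewise along $V_q^-$ for the second identity --- is asserted rather than proved. It can be repaired, but it needs either the transport equations from the quasi-mode construction of \cite{HelSj4} (not contained in the statement of Lemma \ref{lem:eigenwkb} you are quoting), or a Stokes argument on a thin flow tube around a compact path in $V_q^+$ combined with the observation that the correction $d\phi(q)=\sum_{\deg p=\deg q+1}\pm\,\phi(p)$ is exponentially small on such a tube because, by Morse--Smale transversality and the resulting strict index increase along broken ascending trajectories, the zero set of $g^+_p$ cannot meet $V_q^+$; the analogous statement for $\ast_{ij}\phi(q)$ handles the $V_q^-$ side. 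Second, in the splitting of $\|e^{-\lambda f_{ij}}\phi(q)\|^2$ near $q$ (which is the right mechanism) you must track the exponential weights: since $2f_{ij}+g^+_q=2f_{ij}(q)+g^-_q$, the product decomposition actually yields $\lambda^{\deg(q)/2}\int_{NV^-_{q,x}}e^{-\lambda g^-_q}\chi\ast\omega_{q,0}=e^{2\lambda f_{ij}(q)}\,\|e^{-\lambda f_{ij}}\phi(q)\|^2\,(1+\mathcal{O}(\lambda^{-1}))$, so the displayed second identity holds only under the weight convention implicit in Remark \ref{rem:eigenwkbremark} (equivalently, after normalizing $f_{ij}(q)=0$ or inserting the compensating factor $e^{2\lambda f_{ij}(q)}$); your sketch does not notice this bookkeeping, and a careful write-up must either carry the factor or state the convention.
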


\subsubsection{WKB for homotopy operator}\label{sec:homotopy_wkb_result}
We recall the key estimate for the homotopy operator $H_{ij}$ proven in \cite[Section 4]{klchan-leung-ma}. Let $\gamma(t)$ be a flow line of $\nabla f_{ij}/ | \nabla f_{ij}|_{\dist_{ij}}$ starts at $\gamma(0)=x_{S}$ and $\gamma(T) = x_{E}$ for a fixed $T>0$ as shown in the following figure \ref{fig:situation}. 
\begin{figure}[h]
	\centering
	\includegraphics[scale = 0.25]{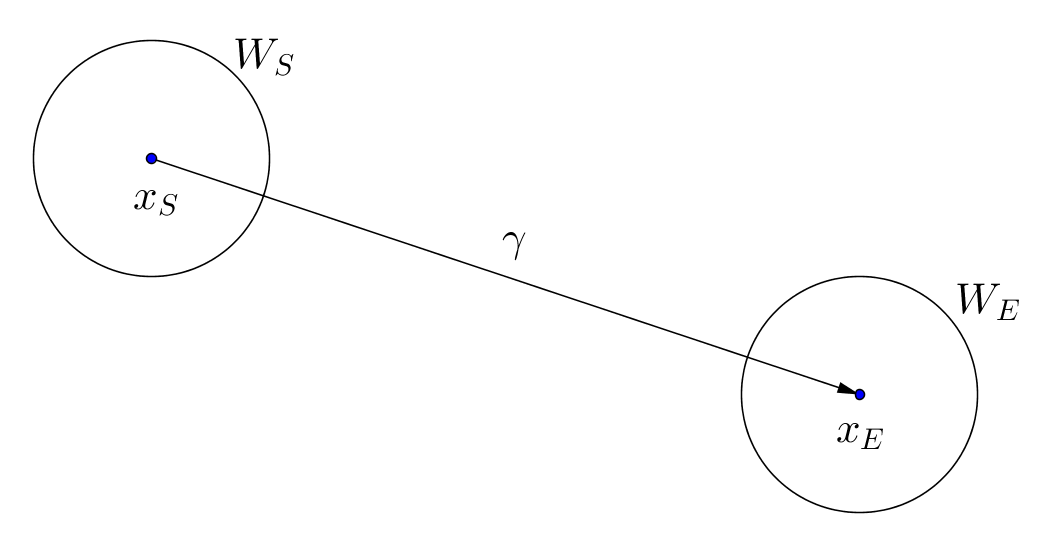}
	\caption{gradient flow line $\gamma$}\label{fig:situation}
\end{figure}
We consider an input form $\zeta_S$ defined in a neighborhood $W_S$ of $x_{S}$. Suppose we are given a WKB approximation of $\zeta_S$ in $W_S$, which is an approximation of $\zeta_S$ according to order of $\lam$ of the form
\begin{equation}\label{eqn:awkbexpansion}
\zeta_S \sim e^{-\lam \g_{\scalebox{.7}{$\scriptscriptstyle S$}}}(\omega_{S,0}+\omega_{S,1} \lam^{-1/2}+ \omega_{S,2} \lam^{-1} + \dots)
\end{equation}
which means we have $\lam_{j,0}>0$ such that when $\lam > \lam_{j,N,0}$ we have
$$
\|  e^{\lam\g^{}_{\scalebox{.7}{$\scriptscriptstyle S$}}}\nabla^j ( \zeta_S -  e^{-\lam \g^{}_{\scalebox{.7}{$\scriptscriptstyle S$}}} (\sum_{i=0}^{N} \omega_{S,i} \lam^{-i/2})) \|^2_{L^\infty(W_S)} \leq C_{j,N} \lam^{-N-1+2j},
$$
for any $j,N \in \inte_+$. We further assume that $g_S$ is a nonnegative Bott-Morse function in $W_S$ with zero set $V_S$ such that $\gamma$ is not tangent to $V_S$ at $x_S$. We consider the equation
\begin{equation}\label{eqn:homotopyeq}
\Delta_{ij} \zeta_E = (I-P_{ij})d_{ij}^*(\chi_S \zeta_S),
\end{equation}
where $\chi_S$ is a cutoff function compactly supported in $W_S$, $P_{ij}:\Omega^*(M)\rightarrow\Omega_{ij,<1}^*$ is the projection. We want to have a WKB approximation of $\zeta_E = H_{ij}(\chi_S \zeta_S)$
\begin{lemma}\label{lem:homotopywkb}
	For $\supp(\chi_S)$ small enough (the size only depends on $\g_S$ and $f_{ij}$), there is a WKB approximation of $\zeta_E$ in a small enough neighborhood $W_E$ of $x_E$, of the form
	$
	\zeta_E \sim e^{-\lam \g_{\scalebox{.7}{$\scriptscriptstyle E$}}}\lam^{-1/2}(\omega_{E,0}+\omega_{E,1}\lam^{-1/2}+\dots)
	$
	in the sense that we have $\lam_{j,0}>0$ such that when $\lam > \lam_{j,N,0}$ we have
	$$
	\|e^{\lam \g^{}_{\scalebox{.7}{$\scriptscriptstyle E$}}}\nabla^j\lbrace \zeta_E - e^{-\lam\g^{}_{\scalebox{.7}{$\scriptscriptstyle E$}}} (\sum_{i=0}^{N} \omega_{E,i}\lam^{-(i+1)/2} )\rbrace \|^2_{L^\infty(W_E)} \leq C_{j,N} \lam^{-N+2j}.
	$$
	Furthermore, the function $\g_E$ (only depending on $\g_S$ and $f_{ij}$) is a nonnegative function which is Bott-Morse in $W_E$ with zero set $V_E =( \bigcup_{-\infty<t<+\infty} \varsigma_t(V_S)) \cap W_E$ which is a closed submanifold in $W_E$, where $\varsigma_t$ is the $t$-time $\nabla f_{ij}/ |\nabla f_{ij}|^2$.
\end{lemma}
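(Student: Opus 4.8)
The plan is to follow \cite[Section~4]{klchan-leung-ma}: build a WKB parametrix for $\zeta_E=H_{ij}(\chi_S\zeta_S)$ along the flow line $\gamma$ and control the error with the resolvent estimates of Section~\ref{sec:recalling_old_result}. \emph{Reduction.} Since $d^*_{ij}=d^*+2\lam\iota_{\nabla f_{ij}}$ squares to zero it commutes with $\Delta_{ij}$, hence with $G_{ij}$ and $P_{ij}$, so $\zeta_E=H_{ij}(\chi_S\zeta_S)$ indeed solves \eqref{eqn:homotopyeq}. Replace $\zeta_S$ by a long WKB partial sum $\zeta_S^{(N')}=e^{-\lam\g_S}\sum_{i=0}^{N'}\omega_{S,i}\lam^{-i/2}$ with $N'\gg N$: the difference has weighted sup-size $O(\lam^{-\infty})$, and by Lemma~\ref{lem:resolventlemma} (base points over $\supp\chi_S$, target point $x_E$) its $H_{ij}$-image contributes only an $O(\lam^{-\infty})$-term to $\zeta_E$ near $x_E$. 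Applying $d^*_{ij}$ to $\chi_S\zeta_S^{(N')}$ gives, away from $\supp(d\chi_S)$, a form $\eta_0\sim\lam\,e^{-\lam\g_S}\big(\iota_{2\nabla f_{ij}-\nabla\g_S}\omega_{S,0}+O(\lam^{-1/2})\big)$ carrying the phase $\g_S$; the $d\chi_S$-terms carry the same weight but sit off the relevant characteristics once $\supp\chi_S$ is small, and together with the projection term $-P_{ij}d^*_{ij}(\chi_S\zeta_S)$ — expanded in the almost-orthonormal eigenbasis and bounded via Lemma~\ref{lem:eigenestimate2}, Lemma~\ref{lem:agmon_dist_flow_line} and the spectral gap $\Spec(\Delta_{ij})\cap[ce^{-c\lam},1)=\emptyset$ — they feed into the $O(\lam^{-\infty})$ remainder. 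Thus it remains to construct, in a neighbourhood of $\gamma$, a WKB form solving $\Delta_{ij}\zeta_E=\eta_0$ up to an $O(\lam^{-\infty})$ error, and to produce $\g_E$ along the way.

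\emph{The phase $\g_E$.} Conjugating, the leading ($\lam^2$) part of $e^{\lam\g}\Delta_{ij}e^{-\lam\g}$ is the scalar $q(\g):=|\nabla\g-\nabla f_{ij}|^2-|\nabla f_{ij}|^2$. I take $\g_E$ to be the solution of the Hamilton--Jacobi equation $q(\g_E)=0$ singled out by $\g_E\ge 0$ and $\g_E\equiv 0$ on $V_S$, i.e.\ the flow-out phase propagating $\g_S$ along the characteristics, compatibly with the exponential rate in Lemma~\ref{lem:resolventlemma}. The null-bicharacteristics of $q$ project to the gradient flow lines of $f_{ij}$ reparametrised to unit $f_{ij}$-speed, i.e.\ to integral curves of $\varsigma_t$; since $\gamma$ is not tangent to $V_S$ at $x_S$, for $\supp\chi_S$ small the characteristics issuing from $V_S$ near $x_S$ form a nonsingular, focal-point-free foliation of a neighbourhood $W_E$ of $x_E$. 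Hence $\g_E$ is smooth on $W_E$, its zero set is the embedded closed submanifold $V_E=\big(\bigcup_t\varsigma_t(V_S)\big)\cap W_E$ of the same codimension as $V_S$, and transporting the (transversally nondegenerate) Hessian of $\g_S$ along the flow — a matrix Riccati equation — keeps it nondegenerate, so $\g_E$ is Bott--Morse along $V_E$. This yields all the asserted properties of $\g_E$.

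\emph{The amplitudes and the error estimate.} With $\g_E$ fixed, insert $\zeta_E\sim e^{-\lam\g_E}\lam^{-1/2}\sum_{i\ge 0}\omega_{E,i}\lam^{-i/2}$ into $\Delta_{ij}\zeta_E=\eta_0$ and expand in powers of $\lam$, using the form-valued Witten formula for $\Delta_{ij}$ whose $\lam^1$-part is a first-order transport operator along $\varsigma_t$ plus a zeroth-order $\nabla^2 f_{ij}$-term. The $\lam^2$-coefficients cancel by the eikonal equation; matching the remaining half-integer powers gives, near $\supp\chi_S$, an inhomogeneous first-order ODE along each characteristic for $\omega_{E,0}$ driven by the leading amplitude of $\eta_0$, and for $i\ge 1$ an inhomogeneous transport ODE for $\omega_{E,i}$ with source built from $\omega_{E,0},\dots,\omega_{E,i-1}$ and $\omega_{S,0},\dots,\omega_{S,i}$; downstream in $W_E$ (where $\chi_S\equiv 0$) the same ODEs hold with vanishing right-hand side, so the $\omega_{E,i}$ are obtained by propagating the data accumulated over $\supp\chi_S$. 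Integrating these linear ODEs along the flow produces smooth amplitudes $\omega_{E,i}$ on $W_E$ vanishing off $V_E$ to the order dictated by $\g_E$; the overall normalisation $\lam^{-1/2}$ is forced by the matching across the transition layer $\partial\supp\chi_S$, where a source of transverse Gaussian width $\lam^{-1/2}$ about $V_S$ is integrated along the incoming characteristics. Letting $\zeta_E^{(M)}$ be the partial sum with $M$ large, the residual $\Delta_{ij}\zeta_E^{(M)}-\eta_0$ has weighted sup-size $O(\lam^{-(M-2)/2})$ near $\gamma$ and is $O(\lam^{-\infty})$ elsewhere; applying Lemma~\ref{lem:resolventlemma} with weight $e^{-\lam\g_E}$ — tracking the finitely many $\lam$-powers lost through the Sobolev norm of the weighted residual and through the Green kernel, using the subadditivity of $\tdist_{ij}$ along characteristics — bounds $\zeta_E-\zeta_E^{(M)}$, and since the tail $\zeta_E^{(M)}-\zeta_E^{(N)}=O(e^{-\lam\g_E}\lam^{-(N+2)/2})$ we obtain $\|e^{\lam\g_E}\nabla^j(\zeta_E-\zeta_E^{(N)})\|^2_{L^\infty(W_E)}\le C_{j,N}\lam^{-N+2j}$ for $M=M(N,j)$ large; the derivative bounds come from differentiating the parametrix and its defining ODEs.

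\emph{Main obstacle.} The crux is constructing $\g_E$: showing that the Hamilton--Jacobi solution propagated from the Bott--Morse datum $\g_S$ stays smooth on the whole neighbourhood $W_E$, remains Bott--Morse, and has zero set exactly the flow-out $V_E$ of $V_S$. Ruling out caustics (focal points) of the characteristic flow inside $W_E$ is precisely where the smallness of $\supp\chi_S$ and the non-tangency of $\gamma$ to $V_S$ enter; and controlling the transverse Hessian of $\g_E$ through its Riccati equation — together with the transition-layer analysis that pins down the normalisation and the initial data of the transport ODEs — is the other technically delicate point.
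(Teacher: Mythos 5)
This lemma is not proved in the paper at all: it is imported verbatim from \cite[Section 4]{klchan-leung-ma}, so there is no in-paper argument to compare against. Your sketch reconstructs essentially the strategy of that reference --- eikonal equation $|\nabla \g_E-\nabla f_{ij}|^2=|\nabla f_{ij}|^2$ for the propagated phase, transport equations for the amplitudes with the $\lam^{-1/2}$ normalisation produced by the transition layer, and the resolvent estimate of Lemma \ref{lem:resolventlemma} to absorb the remainder --- and the two points you flag as delicate (caustic-free propagation of the Bott--Morse phase, and the boundary-layer matching) are indeed where the real work lies. The one step you dispose of too quickly is the cutoff term $\iota_{\nabla\chi_S}\zeta_S$ and the projection term: since $\supp(d\chi_S)$ generally meets $V_S$ (where $\g_S=0$), the crude bound $e^{-\lam(\g_S(y)+\tdist_{ij}(y,x)-\epsilon)}$ only gives $e^{-\lam(\g_E(x)-\epsilon)}$, which is \emph{larger} than the target $e^{-\lam\g_E(x)}\lam^{-N}$; one must first shrink $W_E$ so that for every $x\in W_E$ the minimiser of $y\mapsto \g_S(y)+\tdist_{ij}(y,x)$ lies in the interior of $\{\chi_S\equiv 1\}$, whence the infimum over the compact set $\supp(d\chi_S)$ exceeds $\g_E(x)$ by a uniform $\delta>0$ and the contribution becomes genuinely exponentially subleading. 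This is a repairable but non-automatic point, and it is precisely why the lemma insists that both $\supp(\chi_S)$ and $W_E$ be small.
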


Finally, we have the following Lemma \ref{lem:wkb_integration_relation} from \cite{klchan-leung-ma} relating the integrals of $\omega_{S,0}$ and $\omega_{E,0}$. 

\begin{lemma}\label{lem:wkb_integration_relation}
	Using same notations in lemma \ref{lem:homotopywkb} and suppose $\chi_S$ and $\chi_E$ are cutoff functions supported in $W_S$ and $W_E$ respectively, then we have
	\begin{equation}
	\lam^{-\half}\int_{N_{x_E}} e^{-\lam \g^{}_{\scalebox{.7}{$\scriptscriptstyle E$}}} \chi_{E} \omega_{E,0} = (\int_{N_{x_S}} e^{-\lam \g^{}_{\scalebox{.7}{$\scriptscriptstyle S$}}}\chi_{S} \omega_{S,0})(1+\mathcal{O}(\lam^{-1})).
	\end{equation}
	Furthermore, suppose $\omega_{S,0}(x_S)\in \bigwedge^{top} N(V_S)^*_{x_S}$, we have $\omega_{E,0}(x_E) \in  \bigwedge^{top} N(V_E)^*_{x_E}$. Here $\bigwedge^{top} E$ refers to $\bigwedge^r E$ for a rank $r$ vector bundle $E$. Here $N_{v_S}$ and $N_{v_E}$ are any closed submanifold of $W_S$ and $W_E$ intersecting $V_S$ and $V_E$ transversally at $x_S$ and $x_E$ respectively.
	\end{lemma}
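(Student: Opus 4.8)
The identity to be proven compares only the leading WKB amplitudes $\omega_{S,0}$ and $\omega_{E,0}$, so the plan is: first run the WKB transport analysis for $\zeta_E = H_{ij}(\chi_S\zeta_S)$ along the trajectory $\gamma$ to pin down $\omega_{E,0}(x_E)$ in terms of $\omega_{S,0}(x_S)$ (this simultaneously yields the top-normal-degree assertion), and then evaluate the two transverse integrals by Laplace's method and match constants. The normalizing factor $\lam^{-\half}$ is forced by the fact that $\dim V_E = \dim V_S + 1$, the flow direction (to which $\gamma$ is transverse at $x_S$) being adjoined to $V_S$ to form $V_E$, so that the Gaussian integral over the slice $N_{x_E}$ is one order in $\lam^{1/2}$ larger than that over $N_{x_S}$. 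Informally, after also invoking the subleading WKB terms via \eqref{eqn:awkbexpansion} and Lemma \ref{lem:homotopywkb}, the statement amounts to $\int_{N_{x_E}}\chi_E\,H_{ij}(\chi_S\zeta_S)\approx \int_{N_{x_S}}\chi_S\zeta_S$: applying the homotopy operator and integrating over a downstream transverse slice recovers the upstream slice integral; but to obtain the sharp $O(\lam^{-1})$ error one argues directly with the leading amplitudes.

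For the transport step, substitute $\zeta_E = e^{-\lam\g_E}\lam^{-\half}(\omega_{E,0}+\lam^{-\half}\omega_{E,1}+\cdots)$ into $\Delta_{ij}\zeta_E = (I-P_{ij})d^*_{ij}(\chi_S\zeta_S)$. Since $\g_E$ solves the eikonal equation attached to $\Delta_{ij}$ --- this is how $\g_E$ arises in Lemma \ref{lem:homotopywkb}, by the method of characteristics with bicharacteristics the gradient trajectories of $f_{ij}$ --- the $\lam^{2}$-order terms cancel, and the recursion exhibits $\omega_{E,0}$ along $\gamma$ as the integrating-factor solution of a first-order linear transport ODE along those trajectories, with source (equivalently, initial data at the downstream edge of $\supp\chi_S$) coming from the leading part $\lam\, e^{-\lam\g_S}\chi_S\,\iota_{\nabla(\g_S+2f_{ij})}\omega_{S,0}$ of $d^*_{ij}(\chi_S\zeta_S)$. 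Solving the ODE, $\omega_{E,0}(x_E)$ is the time-$T$ gradient-flow transport of $\omega_{S,0}(x_S)$, contracted once by $\iota_{\nabla f_{ij}}$ (the contraction carried by $d^*_{ij}$) and weighted by the Jacobian of the flow on the normal bundle; this one contraction trades the $N(V_S)$-direction along $\gamma$ for the flow direction tangent to $V_E$, so that $\omega_{S,0}(x_S)\in\bigwedge^{top} N(V_S)^*_{x_S}$ implies $\omega_{E,0}(x_E)\in\bigwedge^{top} N(V_E)^*_{x_E}$, proving the second assertion. That this formal parametrix captures $\zeta_E$ up to the stated error is the content of Lemma \ref{lem:homotopywkb}, while the resolvent estimates of Lemmas \ref{lem:resolventestimate}--\ref{lem:resolventlemma} let one discard the part of $\zeta_E$ on $W_E$ coming from outside a thin tube around $\gamma$ --- the reason $\supp\chi_S$, $\supp\chi_E$ must be small.

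For the Laplace step, note that $\g_S|_{N_{x_S}}$ and $\g_E|_{N_{x_E}}$ are Morse with nondegenerate minimum of value $0$ at $x_S$, $x_E$ (transversality of the slices to $V_S$, $V_E$) and the amplitudes are smooth, so Laplace's method expresses $\int_{N_{x_S}}e^{-\lam\g_S}\chi_S\omega_{S,0}$ and $\int_{N_{x_E}}e^{-\lam\g_E}\chi_E\omega_{E,0}$ as the amplitude evaluated at the minimum times a Gaussian factor $(2\pi/\lam)^{d/2}(\det\mathrm{Hess})^{-1/2}$ of the appropriate slice ($d=\dim N_{x_S}$, resp. $d=\dim N_{x_E}=\dim N_{x_S}-1$), with full asymptotic expansions in integer powers of $\lam^{-1}$ --- which is where the $O(\lam^{-1})$ error and the $\lam^{-\half}$ prefactor come from. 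It then remains to check that the transport solution of the previous step, together with the relation between $\mathrm{Hess}(\g_E)$ and $\mathrm{Hess}(\g_S)$ determined by the method-of-characteristics construction of $\g_E$ from $\g_S$, makes the total numerical constant equal to $1$. I expect this constant bookkeeping to be the main obstacle: one must see that the transport ODE, the single contraction $\iota_{\nabla f_{ij}}$, and the one-dimensional Gaussian integral in the flow/time direction produced by $G_{ij}$ (the true source of the half-power $\lam^{-\half}$) conspire so that coefficient $1$ survives with no stray $2\pi$. A convenient packaging is to show that $t\mapsto \int_{N_{\gamma(t)}}e^{-\lam\g_E}\chi\,\omega_{E,0}$ is a conserved flux --- constant in $t$ up to relative error $O(\lam^{-1})$, because the transport ODE puts it in divergence form, so the Jacobian and Hessian factors cancel automatically --- and then to compute its value just downstream of $\supp\chi_S$ by pairing the source with the local Green operator, which supplies the $\lam^{-\half}$ and matches $\int_{N_{x_S}}e^{-\lam\g_S}\chi_S\omega_{S,0}$.
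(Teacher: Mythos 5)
First, a point of order: the paper does not prove this lemma. It is imported from \cite{klchan-leung-ma} (the surrounding text reads ``we have the following Lemma \ref{lem:wkb_integration_relation} from \cite{klchan-leung-ma}''), so there is no in-paper proof to compare yours against. Judged on its own terms, your outline is the standard strategy and, as far as the structure goes, the one used in the cited source: substitute the WKB ansatz into $\Delta_{ij}\zeta_E=(I-P_{ij})d^*_{ij}(\chi_S\zeta_S)$, use the eikonal equation satisfied by $\g_E$ to reduce the leading order to a first-order transport equation along the gradient trajectories, identify $\omega_{E,0}(x_E)$ as the transported amplitude contracted once by $\iota_{\nabla f_{ij}}$ (which correctly yields the $\bigwedge^{top}N(V_E)^*$ assertion, since the contraction removes exactly the flow direction adjoined to $V_S$ to form $V_E$), and compare the two transverse Gaussian integrals; your dimension count $\dim N_{x_E}=\dim N_{x_S}-1$ correctly accounts for the prefactor $\lam^{-\half}$, your identification of the leading source term $\lam e^{-\lam \g_S}\chi_S\,\iota_{\nabla(\g_S+2f_{ij})}\omega_{S,0}$ is right, and the ``conserved flux'' packaging is a sensible way to organize the cancellation of Jacobian and Hessian factors.

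That said, as written this is a program rather than a proof, and the two points you yourself flag are exactly the content of the lemma. (i) The constant: you do not actually solve the transport ODE or carry out the pairing of the source with the local Green kernel, so the claim that everything cancels to give exactly $1$ (with the $\lam^{-\half}$ emerging from the one-dimensional integral in the flow direction and no stray $2\pi$'s) is asserted, not established; this computation is where essentially all of the work in \cite[Section 4]{klchan-leung-ma} lies. (ii) The error order: the WKB expansions proceed in powers of $\lam^{-1/2}$, so to obtain the stated relative error $\mathcal{O}(\lam^{-1})$ rather than $\mathcal{O}(\lam^{-1/2})$ one must show that the half-integer correction $\omega_{E,1}$ contributes nothing to the transverse integral at relative order $\lam^{-1/2}$ (e.g.\ because odd-index terms vanish, as in the footnote to Lemma \ref{lem:eigenwkb}, or by a parity argument in the stationary-phase expansion); your appeal to ``full asymptotic expansions in integer powers of $\lam^{-1}$'' presupposes this rather than proving it. Neither issue indicates a wrong approach, but both must be supplied before the argument is complete; for the application in Section \ref{sec:explicit_contribution} only a $1+\mathcal{O}(\lam^{-1/2})$ statement is actually consumed, so gap (ii) is less consequential than gap (i).
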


\subsection{Apriori Estimate}\label{sec:apriori_estimate}

\begin{notation}\label{not:abbrev_notation_ij}
	From now on, we will consider a fixed generic sequence $\vec{f} = (f_0,\dots,f_k)$ with corresponding sequence of critical points $\vec{q} = (q_{(k-1)k},\dots ,q_{01},q_{0k})$ and a fixed labeled ribbon $k$-tree $\tr$ such that $\dim(\mathcal{M}_\tr(\vec{f},\vec{q})) = 0$ (the dimension is given by formula \eqref{eqn:dimension_of_moduli_space}). We use $q_{ij}$ to denote a fixed critical point of $f_{ij}$. $\phi(q_{ij})$ associated to $q_{ij}$ is abbreviated by $\phi_{ij}$.
\end{notation}

\begin{notation}\label{not:tree_degree_valency_notation}
	For $\tr \in \tree{k}$ or $\ltree{k}$ with $\vec{q}$, we let $\simplex_{\tr}:= \prod_{v \in \lvertex} \simplex_{\val(v)}$ of dimension $\val(T):= \sum_{v \in \lvertex} \val(v)$, and we also let $\deg(\tr):= \sum_{i=0}^{k-1} \deg(q_{i(i+1)}) - |\tr^{[1]}| - \val(T)$. We inductively define a volume form $\nu_{\tr}$ on $\simplex_{\tr}$ for labeled ribbon tree $\tr \in \ltree{k}$ by: letting $\nu_{l} = dt_l \wedge \cdots \wedge dt_1$ on the $\simplex_l$; and for $v_r$ labeled with $1$ we split $\tr$ at $v_r$ into $\tr_2$ and $\tr_1$ such that $\tr_2, \tr_1 ,e_o$ is clockwisely oriented, then we take $\nu_{\tr} = \nu_{\tr_2} \wedge \nu_{\tr_1}$; and for $v_r$ labeled with $u$ we split $\tr$ at $v_r$ into $\tr_l,\dots,\tr_1$ clockwisely, and we take $\nu_\tr = \nu_{\tr_l} \wedge \cdots \wedge \nu_{\tr_1} \wedge \nu_{l}$. We should also write $\nu_{\tr}^{\vee}$ to be the polyvector field dual to $\nu_{\tr}$. 
\end{notation}

\begin{definition}\label{def:distance_function_of_a_tree}
	Given a labeled ribbon $k$-tree $\tr$ with $\vec{f}$ and $\vec{q}$ as above, we associate to it a length function $\hat{\tdist}_{\tr}$ on $ \mathfrak{M}(\tr):= \simplex_{\tr} \times M^{|\tr^{[0]}_{ni}|} \rightarrow \real_+$ \footnote{Here $\tr^{[0]}_{ni}$ is the set of all vertices besides incoming edges introduced in Definition \ref{def:labeled_k_tree}} with coordinates $(\vec{\mathbf{t}}_{\tr}, \hat{x}_{\tr})$ (where $\vec{\mathbf{t}}_{\tr} = (\mathbf{t}_v)_{v\in \lvertex}$ and $\hat{x}_{\tr} = (x_{v})_{v \in \tr^{[0]}_{ni}}$) inductively along the tree by the rules: 
	\begin{enumerate}
		\item for the unique tree with one edge $e$ numbered by $ij$, we take $\hat{\tdist}_{\tr}(x_{v_o}):= \tdist_{ij}(q_{ij},x_{v_o})$;
		\item when $v_r$ is labeled with $1$, we split $\tr$ at the root vertex $v_r$ into $\tr_2,\tr_1$. We notice that $\mathfrak{M}(\tr) = \mathfrak{M}(\tr_2) \times_M \mathfrak{M}(\tr_1) \times M_{v_o}$ (with coordinates $\vec{\mathbf{t}}_{\tr} = (\vec{\mathbf{t}}_{\tr_2} , \vec{\mathbf{t}}_{\tr_1})$, and $\hat{x}_{\tr} = (\hat{x}_{\tr_2},\hat{x}_{\tr_1}, x_{v_o}) $ such that $x_{\tr_2,v_r} = x_{\tr_1,v_r} = x_{v_r}$ in $M$) and we let $$\hat{\tdist}_{\tr}(\vec{\mathbf{t}}_{\tr},\hat{x}_{\digamma(\tr)}) = \hat{\tdist}_{ij}(x_{v_r},x_{v_o}) + \sum_{j=1}^2 \hat{\tdist}_{\tr_j}(\vec{\mathbf{t}}_{\tr_j},\hat{x}_{\tr_j})$$ if the numbering on $e_o$ is $ij$;
		\item when $v_r$ is labeled with $u$, we split $\tr$ at $v_r$ into $\tr_l ,\dots , \tr_1$ and we can write $\mathfrak{M}(\tr) = \mathfrak{M}_{\tr_l} \times_M \cdots \times_M \mathfrak{M}(\tr_1) \times_M (\simplex_{l} \times M_{v_r}) \times M_{v_o}$ where $l = \val(v_r)$. By writing coordinates $(\vec{\mathbf{t}}_{\tr_j}, \hat{x}_{\tr_j})$ for $\mathfrak{M}(\tr_j)$, $\mathbf{t}_{v_r} = (t_{v_r,l},\dots,t_{v_r,1})$ for $\simplex_l$, $x_{v_r}$ for $M_{v_r}$ and $x_{v_o}$ for $M_{v_o}$ satisfying $x_{\tr_l,v_r} = \actionmap_{t_{v_r,l}} (x_{v_r}) ,\cdots , x_{\tr_1,v_r} = \actionmap_{t_{v_r,1}} (x_{v_r})$, we let
		$$
		\hat{\tdist}_{\tr}(\vec{\mathbf{t}}_{\tr},\hat{x}_{\tr}) :=  \hat{\tdist}_{ij}(x_{v_r},x_{v_o}) + \sum_{j=1}^l 	\hat{\tdist}_{\tr_j} \big(\vec{\mathbf{t}}_{\tr_j} ,  \hat{x}_{\tr_j} \big)
		$$
		if the numbering on $e_o$ is $ij$. 
	\end{enumerate}
Fixing the outgoing point $x_{v_o} = q_{0k}$ giving coordinates $\vec{x}_{\tr} = (x_v)_{v \in \tr^{[0]}}$ for $M^{|\tr^{[0]}|}$, we let  $\tdist_\tr(\vec{\mathbf{t}}_{\tr},\vec{x}_\tr) := \hat{\tdist}_{\tr}(\vec{\mathbf{t}}_\tr, \vec{x}_\tr, q_{0k})$. 
	\end{definition}

\begin{example}\label{eg:example_on_distance_function}
	Suppose that $\tr$ is the labeled ribbon $2$-tree with two incoming vertices $v_{2}$ and $v_{1}$ joining to $v$ labeled with $u$ by $e_{12}$ and $e_{01}$, and $v$ is joining to the root vertex $v_r$ labeled with $u$ via $e$. Then we have $\simplex_{\tr} \times M^{|\tr^{[0]}_{ni}|} = \simplex_2 \times \mathbb{S}^1 \times M^3$ and $\hat{\tdist}_{\tr}(t_{v,2},t_{v,1},t_{v_r},x_{v},x_{v_r},x_{v_o}) =  \tdist_{02}(x_{v_r},x_{v_o}) + \tdist_{02}(x_{v},\sigma_{t_{v_r}}(x_{v_r})) + \tdist_{12}(q_{12},\sigma_{t_{v,2}}(x_v)) + \tdist_{01}(q_{01},\sigma_{t_{v,1}}(x_v))$. The following Figure \ref{fig:jumping_distance} shows the tree $\tr$ and its associated $\hat{\tdist}_\tr$.
	
	 \begin{figure}[h]
		\centering
		\includegraphics[scale = 0.5]{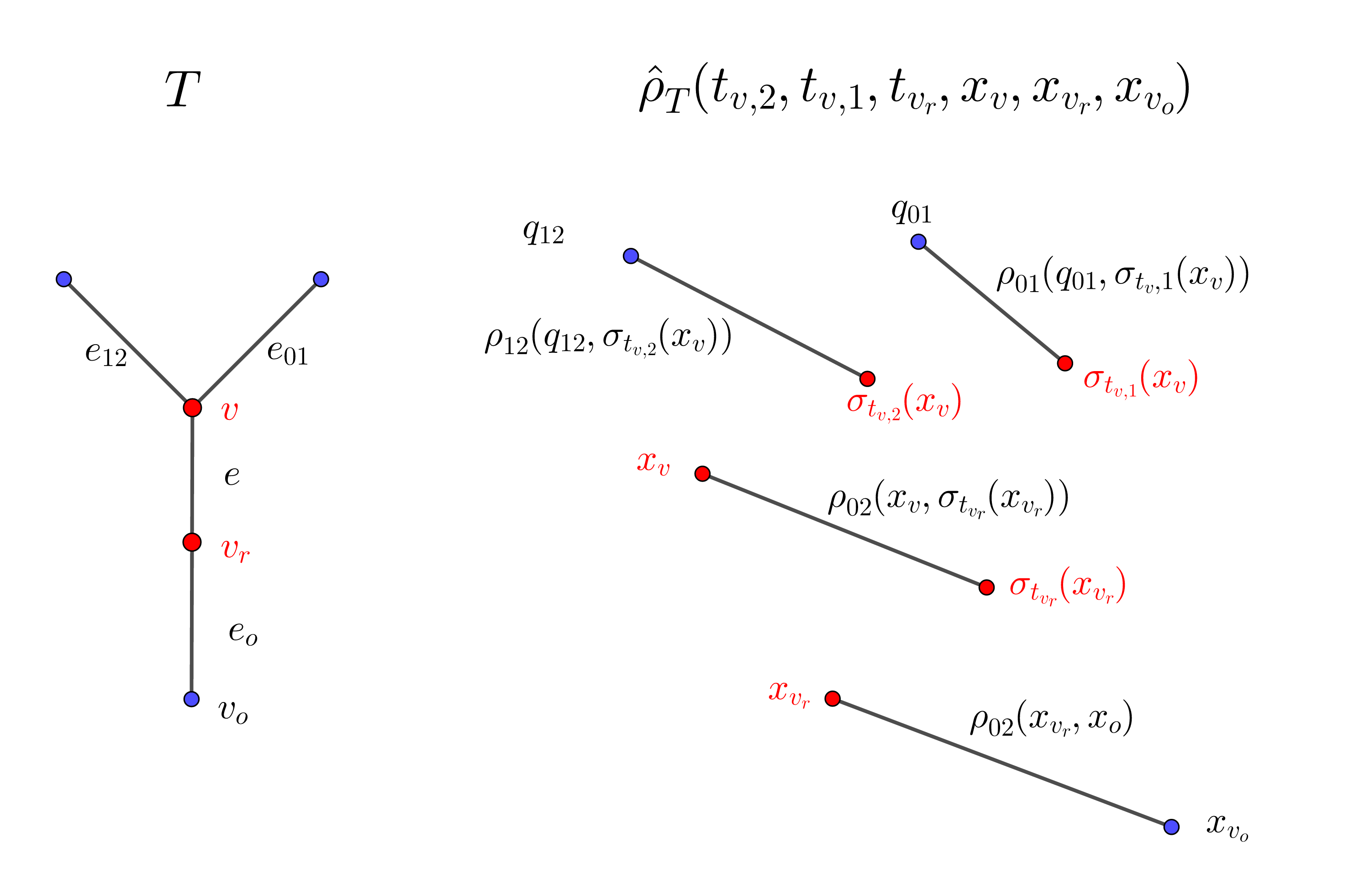}
		\caption{Distance function associated to $\tr$}\label{fig:jumping_distance}
	\end{figure}
	\end{example}

From its construction and Lemma \ref{lem:agmon_dist_flow_line}, we notice that $\tdist_{\tr}(\vec{\mathbf{t}}_{\tr},\vec{x}_{\tr}) \geq 0$ and equality holds if and only if for each edge $e$ numbered by $ij$ with $\partial_{in}(e) = v_1$ and $\partial_{o}(e) = v_2$, there is a generalized flow line of $\nabla f_{ij}$ joining $x_{v_1}$ to $\tilde{x}_{v_2}$, where $\tilde{x}_{v_2}= x_{v_2}$ when $v_2$ is labeled by $1$; and $\tilde{x}_{v_2} = \actionmap_{t_{v_2,j}}(x_{v_2})$ if $v_2$ is labeled by $u$ with and $e$ is the $j^{\text{th}}$ incoming edges of $v_2$ in the anti-clockwise orientation. Therefore, we have a generalized flow tree (with jumping) of type $(\tr,\vec{f},\vec{q})$ (which is a generalization of flow tree in Definition \ref{def:equivariant_morse_counting} by allow broken flow lines as in Definition \ref{lem:agmon_dist_flow_line}). With the condition that $\dim(\mathcal{M}(\vec{f},\vec{q})) = 0$ as mentioned in Notation \ref{not:abbrev_notation_ij}, we notice that every such generalized flow line is an actual flow line from the generic assumption \ref{def:moduli_space_as_intersection} for $\vec{f}$, because the expected dimension for flow tree with broken flow line is negative.

\begin{notation}\label{not:gradient_flow_tree_notation}
	We let $\ftree_1 , \dots, \ftree_{d}$ be the gradient flow tree of type $(\tr,\vec{f},\vec{q})$, such that each $\ftree_i$ is associated with a point $\mathbf{t}_{\ftree_i,v} \in \simplex_{\val(v)}$ (for $v \in \lvertex$) and $x_{\ftree_i,v} \in M$ (for $v \in \tr^{[0]}$) such that 
	\begin{enumerate}
		\item $x_{\ftree_i,v}$ is the starting point of a gradient flow line $\gamma_e$ associated to edge $e$ if $\partial_{in}(e) = v$, and we write $x_{\ftree_i,e,v} = x_{\ftree_i,v}$ in this case;
		\item $x_{\ftree_i,v}$ is the end point of the gradient flow line $\gamma_e$ if $v$ is labeled by $1$ if $\partial_o(e) =v$, and we write $x_{\ftree_i,e,v} = x_{\ftree_i,v}$ in this case;
		\item and $\actionmap_{t_{\ftree_i,v,j}} (x_{\ftree_i,v})$ is the end point of a gradient flow line $\gamma_e$ associated to $j^{th}$-edge $e$ clockwisely if $v$ is labeled by $u$ and $\partial_o(e) =v$, and we write $x_{\ftree_i,e,v} = \actionmap_{t_{\ftree_i,v,j}} (x_{\ftree_i,v})$ in this case.
		\end{enumerate}
	\end{notation} 

We consider a sequence of cut off functions $\vec{\chi}:= (\chi_{v})_{v \in \tr^{[0]}}$ such that $\chi_v $ compactly supported in a ball $U_v :=  B(x_v,r/2)$ of radius $r$ centered at a fixed point $x_v \in M$, and $(\vec{\varkappa}_{v})_{v \in \lvertex}$ with $\varkappa_v$ compactly support in a small neighborhood $\mathbf{C}_v$ containing a fixed $\mathbf{t}_v = (t_{v,\val(v)},\dots,t_{v,1}) \in \simplex_{\val(v)}$ such that the Riemannian distance between $\actionmap_{t_{j}}(x)$ and $\actionmap_{t'_{j}}(x)$ is strictly less than $r/2$ for any $j$ and any $x \in M$ and any $\mathbf{t}$ and $\mathbf{t}'$ in $\mathbf{C}_v$. 

\begin{definition}\label{def:cut_off_operations}
	With $\vec{\chi}$ and $\vec{\varkappa}$ as above, we define $\mathfrak{m}^{(e,v)}_{\vec{\chi},\vec{\varkappa}} \in \Omega^*(\simplex_{\tr_{e,v}}\times M)$ \footnote{recall that $\tr_{e,v}$ is introduced in Notation \ref{not:morse_function_associated_to_edges}} for each flag $(e,v) \in \digamma(\tr)$ inductively along $\tr$ by letting: 
	\begin{enumerate}
		\item for the incoming edge $e_{ij}$ with $\partial_o(e_{ij}) = v$, we take $\mathfrak{m}^{(e_{ij},v)}_{\vec{\chi},\vec{\varkappa}} = \phi_{ij}$;
		\item when we have $(e,v)$ with $\partial_{in}(e) = v$ with $v$ is labeled with $1$ with, we let $\tr_2,\tr_1$ to be subtrees with outgoing edges $e_2, e_1$ ending at $v$ such that $e_2,e_1,e$ clockwisely oriented. With coordinates $\vec{\mathbf{t}}_{\tr_{e,v}} = (\vec{\mathbf{t}}_{\tr_2}, \vec{\mathbf{t}}_{\tr_1})$ for $\simplex_{\tr}= \simplex_{\tr_2} \times \simplex_{\tr_1}$, we let $$\mathfrak{m}^{(e,v)}_{\vec{\chi},\vec{\varkappa}} (\vec{\mathbf{t}}_{\tr_{e,v}},x) =(-1)^{\varepsilon}\nu_{\tr_{e,v}} \chi_{v_r}(x) \big(\iota_{\nu_{\tr_2}^{\vee}}\mathfrak{m}^{(e_2,v)}_{\vec{\chi},\vec{\varkappa}}(\vec{\mathbf{t}}_{\tr_2} ,x) \big) \wedge \big( \iota_{\nu_{\tr_1}^{\vee}} \mathfrak{m}^{(e_1,v)}_{\vec{\chi},\vec{\varkappa}} (\vec{\mathbf{t}}_{\tr_1} ,x) \big),$$
		where $\varepsilon = \deg\big(\iota_{\nu_{\tr_2}^{\vee}}\mathfrak{m}^{(e_2,v)}_{\vec{\chi},\vec{\varkappa}}(\vec{\mathbf{t}}_{\tr_2} ,x) \big)+1$;
		\item when we have $v$ labeled with $u$, we let $\tr_l,\dots,\tr_1$ be subtrees with outgoing edges $e_l,\dots,e_1$ ending at $v$ with $e_l,\dots,e_1,e$ clockwisely oriented. We let $$\mathfrak{m}^{(e,v)}_{\vec{\chi},\vec{\varkappa}}(\vec{\mathbf{t}}_{\tr_{e,v}},x) = \nu_{\tr_{e,v}} \chi_{v}(x) \varkappa_{v}(\mathbf{t}_v) \actionmap_{t_{v,l}}^*\big(\iota_{w_{v,l} \wedge \nu_{\tr_l}^{\vee}}\mathfrak{m}^{(e_l,v)}_{\vec{\chi},\vec{\varkappa}}(\vec{\mathbf{t}}_{\tr_l} ,x)\big)\wedge\cdots \wedge \actionmap_{t_{v,1}}^*\big(\iota_{w_{v,1} \wedge \nu_{\tr_1}^{\vee}}\mathfrak{m}^{(e_1,v)}_{\vec{\chi},\vec{\varkappa}}(\vec{\mathbf{t}}_{\tr_1} ,x)\big),$$
		where $t_{v,l},\dots,t_{v,1}$ is the coordinates for $\simplex_{\val(v)}$ and $\vec{\mathbf{t}}_{\tr_{e,v}} = (\vec{\mathbf{t}}_{\tr_l},\dots,\vec{\mathbf{t}}_{\tr_1},t_{v,l},\dots,t_{v,1})$, and $w_{v,j} = \actionmap_*(\dd{t_{v,j}})$;
		\item for an edge $e$ numbered by $ij$ with $\partial_{in}(e) = v_0$ and $\partial_{o}(e) = v_1$ with $v_1$ not being the outgoing vertex $v_o$, we let $\mathfrak{m}^{(e,v_1)}_{\vec{\chi},\vec{\varkappa}} =  d^*_{ij} G_{ij}(\mathfrak{m}^{(e,v_0)}_{\vec{\chi},\vec{\varkappa}})$ where $G_{ij}$ is introduced in Definition \ref{def:operation_associated_to_labeled_tree};
		\item for the outgoing edge $e_o$ with $\partial_{in}(e_o) = v_r$ and $\partial_o(e_o) = v_o$, we take  $\mathfrak{m}^{\tr}_{\vec{\chi},\vec{\varkappa}} = \mathfrak{m}^{(e_o,v_o)}_{\vec{\chi},\vec{\varkappa}}  = \mathfrak{m}^{(e_o,v_r)}_{\vec{\chi},\vec{\varkappa}}$.
		\end{enumerate}
	\end{definition}

\begin{example}\label{eg:example_for_cut_off_operation}
	We the tree $\tr$ described in the previous Example \ref{eg:example_on_distance_function}, we have $\mathfrak{m}^{(e,v)}_{\vec{\chi},\vec{\varkappa}} (t_{v,2},t_{v,1},x_v) = \chi_{v}(x_v) \varkappa_v(t_{v,2},t_{v,1}) dt_{v,2} dt_{v,1} \actionmap_{t_{v_2}}^*(\iota_{w_{v,2}}\phi_{02})(x_v) \wedge \actionmap_{t_{v_1}}^*(\iota_{w_{v,1}}\phi_{01})(x_v)$, $\mathfrak{m}^{(e,v_r)}_{\vec{\chi},\vec{\varkappa}} = d^*_{02} G_{02} (\mathfrak{m}^{(e,v)}_{\vec{\chi},\vec{\varkappa}})$ ($d^*_{02}G_{02}$ only acting on the component $M$) and  $$\mathfrak{m}^{(e_o,v_r)}_{\vec{\chi},\vec{\varkappa}}(t_{v,2},t_{v,1}, t_{v_r},x_{v_r})  = \chi_{v_r}(x_{v_r}) \varkappa(t_{v_r})  dt_{v,2} dt_{v,1} dt_{v_{r}} \actionmap_{t_{v_r}}^*(\iota_{w_{v_r} \wedge \dd{t_{v,1}} \wedge \dd{t_{v,2}} } \mathfrak{m}^{(e,v_r)}_{\vec{\chi},\vec{\varkappa}}) (x_{v_r}),$$ and finally we have $\mathfrak{m}^{\tr}_{\vec{\chi},\vec{\varkappa}} =    \mathfrak{m}^{(e_o,v_r)}_{\vec{\chi},\vec{\varkappa}}$. 
	\end{example}

We take a collection $\{ \vec{\chi}_{\mathbf{i}} \}_{\mathbf{i} \in \mathscr{I}}$ and $ \{ \vec{\varkappa}_{\mathbf{j}} \}_{\mathbf{j} \in \mathscr{J}}$ such that $\vec{\chi}_{\mathbf{i}} = (\chi_{i,v})_{\substack{i \in \mathscr{I}_v \\ v \in \tr^{[0]} }}$ and $\vec{\varkappa}_{\mathbf{j}} = (\varkappa_{j,v})_{\substack{j \in \mathscr{J}_v \\ v \in \lvertex}}$ and such that every collection $\{\chi_{i,v} \}_{i \in \mathscr{I}_v}$ and $\{ \varkappa_{j,v} \}_{j \in \mathscr{J}_v}$ is a partition of unity for $M_{v}$ and $\simplex_{\val(v)}$ respectively (Here we use the notation $\mathscr{I} = \prod_{v\in \tr^{[0]}} \mathscr{I}_v$ and $\mathscr{J} = \prod_{v \in \tr^{[0]}} \mathscr{J}_{v}$). With the cut off construction in Definition \ref{def:cut_off_operations} and the Definition \ref{def:operation_associated_to_labeled_tree}, we have
\begin{equation}\label{eqn:cut_off_operation_summation}
\int_{M} m_{k,\tr}(\lam) (\phi_{(k-1)k},\dots,\phi_{01}) \wedge \frac{\ast e^{-2\lam f_{0k}}\phi_{0k}}{\| e^{-\lam f_{0k}} \phi_{0k}\|^2} = \sum_{\mathbf{i} \in \mathscr{I}} \sum_{\mathbf{j} \in \mathscr{J}} \int_{\simplex_{\tr} \times M} \mathfrak{m}^{\tr}_{\vec{\chi}_{\mathbf{i}},\vec{\varkappa}_{\mathbf{j}} } \wedge \frac{\ast e^{-2\lam f_{0k}} \phi_{0k}}{\| e^{-\lam f_{0k}} \phi_{0k}\|^2}.
\end{equation}

\begin{lemma}\label{lem:apriori_lemma}
	We fix a point $(\vec{\mathbf{t}}_{\tr}, \vec{x}_{\tr})$ in $\mathfrak{M}(\tr)$ with the cut off functions $\vec{\chi}$ and $\vec{\varkappa}$ and $\mathfrak{m}^{\tr}_{\vec{\chi},\vec{\varkappa}}$ as before Definition \ref{def:cut_off_operations}, for any $\epsilon > 0$ we have $\lam_0(\epsilon)$ and small enough radius $r = r(\epsilon)$ of cut off functions (which is described before Definition \ref{def:cut_off_operations}) such that when $\lam > \lam_0$ we have the norm estimate 
	$$\| \mathfrak{m}^{\tr}_{\vec{\chi},\vec{\varkappa}}\wedge \frac{\ast e^{-2\lam f_{0k}} \phi_{0k}}{\| e^{-\lam f_{0k}} \phi_{0k}\|^2} \|_{C^j(\simplex_{\tr} \times M)} \leq C_{j,\epsilon}  e^{-\lam ( \tdist_{\tr}(\vec{\mathbf{t}}_{\tr},\vec{x}_{\tr}) - b_{\tr}\epsilon ) },$$
	for any $j \in \inte_+$ (Here we fix an arbitrary metric on the simplices $\simplex_l$'s), where $b_\tr$ is a constant depending the combinatorics of $\tr$.
	\end{lemma}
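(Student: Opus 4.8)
The plan is to run an induction along the tree $\tr$, processing the flags $(e,v)\in\digamma(\tr)$ in the order prescribed by Definition~\ref{def:cut_off_operations} — from the incoming edges toward the outgoing edge $e_o$ — and to prove at each stage a pointwise $C^j$ exponential-decay estimate for the partial form $\mathfrak{m}^{(e,v)}_{\vec{\chi},\vec{\varkappa}}$ whose decay rate is the part of $\hat{\tdist}_\tr$ (Definition~\ref{def:distance_function_of_a_tree}) accumulated along the portion of the tree processed so far, the intermediate $M$-coordinates being frozen at their reference values — legitimate because the cutoffs $\chi_v$, $\varkappa_v$ confine each of them to within $r/2$ of its reference value and all the Agmon distances $d_{ij}$ are continuous. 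The three analytic inputs are Lemma~\ref{lem:eigenestimate2} (decay of $\phi_{ij}$), Lemma~\ref{lem:resolventlemma} (decay of $G_{ij}$), and Remark~\ref{rem:eigenwkbremark} for the cofactor, $\frac{\ast e^{-2\lambda f_{0k}}\phi_{0k}}{\|e^{-\lambda f_{0k}}\phi_{0k}\|^2}=\mathcal{O}_\epsilon(e^{-\lambda(\g^-_{q_{0k}}-\epsilon)})$ with $\g^-_{q_{0k}}(x)=\tdist_{0k}(x,q_{0k})$. Once the inductive estimate is reached at the outgoing flag $(e_o,v_r)$, wedging with the cofactor and specializing $x_{v_o}=q_{0k}$ makes the two exponents add up to $\tdist_\tr(\vec{\mathbf{t}}_\tr,\vec{x}_\tr)$, because by construction the term $\tdist_{0k}(x_{v_r},x_{v_o})$ of $\hat{\tdist}_\tr$ attached to $e_o$ is exactly what the cofactor supplies; this is the claimed bound.

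The base case is immediate: an incoming edge $e_{ij}$ carries $\mathfrak{m}^{(e_{ij},v)}=\phi_{ij}$, and Lemma~\ref{lem:eigenestimate2} gives $|\nabla^l\phi_{ij}|(x)\leq C_{l,\epsilon}e^{-\lambda(\tdist_{ij}(q_{ij},x)-\epsilon)}$ for all $l$, which is the single-edge distance function of Definition~\ref{def:distance_function_of_a_tree}(1). The inductive step then treats the three constructions of Definition~\ref{def:cut_off_operations} in turn. At a trivalent vertex labeled $1$, $\mathfrak{m}^{(e,v)}$ is — up to a sign, the cutoff $\chi_{v_r}$, the simplex form $\nu_\tr$ and the contractions $\iota_{\nu^{\vee}}$ — the exterior product of two previously-estimated forms at a common point; exponents add under $\wedge$, $\chi_{v_r}$ localizes that point, and continuity of $d_{ij}$ costs only an $O(r)+O(\epsilon)$ loss, recovering the recursion of Definition~\ref{def:distance_function_of_a_tree}(2). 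At a vertex labeled $u$, $\mathfrak{m}^{(e,v)}$ is a product of $\actionmap_{t_{v,l}}^*$-pullbacks of contractions of subtree forms, cut off by $\chi_v\varkappa_v$; here one uses that $\actionmap_t$ preserves $(g,\vol)$ — hence all $C^l$ norms — together with $|\actionmap_t^*\alpha|(x)\leq C|\alpha|(\actionmap_t(x))$ and its higher-derivative analogues (chain rule, $\actionmap$ smooth), and that $\varkappa_v$ is supported on a set $\mathbf{C}_v$ so small that each $\actionmap_{t_{v,l}}(x_v)$ stays within $r/2$ of its reference value; the exponents then add up to $\sum_l\hat{\tdist}_{\tr_l}(\,\cdot\,,\actionmap_{t_{v,l}}(x))$, recovering Definition~\ref{def:distance_function_of_a_tree}(3). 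Along an internal edge $e$ numbered $ij$ with $v_0:=\partial_{in}(e)$ an internal vertex — so $\mathfrak{m}^{(e,v_0)}$ already carries the factor $\chi_{v_0}$ and is supported in $B(x_{v_0},r/2)$ — we have $\mathfrak{m}^{(e,\partial_o(e))}=d^*_{ij}G_{ij}(\mathfrak{m}^{(e,v_0)})$; bounding the $W^{k_l,2}$-norm of the input by its inductive $C^{k_l}$-bound times a power of $\lambda$, and invoking Lemma~\ref{lem:resolventlemma} with input localized near $x_{v_0}$ and output near the relevant point, increases the exponent by exactly the Agmon term $\tdist_{ij}$ attached to $e$ in Definition~\ref{def:distance_function_of_a_tree}, while the first-order operator $d^*_{ij}=d^*+2\lambda\iota_{\nabla f_{ij}}$ only produces a further power of $\lambda$.

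Finally, every polynomial-in-$\lambda$ factor accumulated along the tree is absorbed into the exponent once $\lambda>\lambda_0(\epsilon)$, and the total localization-plus-regularity loss is a sum of one $O(\epsilon)$ contribution per edge and per vertex of $\tr$, hence bounded by $b_\tr\epsilon$ with $b_\tr$ depending only on $|\tr^{[1]}|$ and $|\lvertex|$; one chooses the cutoff radius $r=r(\epsilon)$ small enough that the $O(r)$ losses are themselves $O(\epsilon)$. The main obstacle I anticipate is the bookkeeping inside the inductive step: one must verify that the Agmon factors produced by Lemmas~\ref{lem:eigenestimate2} and~\ref{lem:resolventlemma} assemble term-by-term into $\tdist_\tr$ (equivalently, are never smaller than the relevant piece of it), which requires keeping careful track of the orientation of each edge — so that the Green's-operator factor is $\tdist_{ij}$ evaluated at the correctly ordered pair of endpoints — and of the twists $\actionmap_{t_{v,l}}$ at the $u$-labeled vertices — so that each $\tdist_{ij}$ is evaluated at the $\actionmap_t$-translated arguments prescribed by Definition~\ref{def:distance_function_of_a_tree} — bearing in mind that $d_{ij}$ is \emph{not} invariant under $\actionmap_t$, since the action preserves $g$ but not the individual functions $f_i$.
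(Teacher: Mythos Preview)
Your proposal is correct and follows essentially the same approach as the paper: induction along the tree with Lemma~\ref{lem:eigenestimate2} at the incoming edges, multiplication at the internal vertices (both labels), Lemma~\ref{lem:resolventlemma} for the homotopy operator along internal edges, and Remark~\ref{rem:eigenwkbremark} for the final cofactor, so that the accumulated exponents assemble into $\tdist_\tr$. The paper organizes the induction slightly differently---it bundles each vertex operation together with the subsequent application of $d^*_{ij}G_{ij}$ into a single step---and is somewhat less explicit than you are about the extra polynomial-in-$\lambda$ factor coming from $d^*_{ij}=d^*+2\lambda\iota_{\nabla f_{ij}}$, but the substance is the same.
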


\begin{proof}
	We prove by induction along the tree $\tr$ that for each flag $(e,v)$ with $\partial_o(e) = v \neq v_o$ we have
	$$
	\|  \mathfrak{m}^{(e,v)}_{\vec{\chi},\vec{\varkappa}}\|_{C^j(\simplex_{\tr_{e,v}} \times U_v)} \leq C_{j,\epsilon, \vec{\chi},\vec{\varkappa}} \exp \left( -\lam ( \hat{\tdist}_{\tr_{e,v}}(\vec{\mathbf{t}}_{\tr_{e,v}},\hat{x}_{\tr_{e,v}}) - b_{\tr_{e,v}}\epsilon ) \right),
	$$
	where $U_v = B(x_v,r/2)$, for any points $\vec{\mathbf{t}}_{\tr} \in \simplex_\tr$, $\hat{x}_{\tr} \in M^{|\tr^{[0]}_{ni}|}$ with the assoicated cut off functions $\vec{\varkappa}$ and $\vec{\chi}$ with small enough $r$. The initial case follows from the estimate in Lemma \ref{lem:eigenestimate2}. For induction we consider an edge $e$ with $\partial_{in}(e) = v$ and $\partial_{o}(e) = \tilde{v}$.  We take subtrees (of $\tr$) $\tr_l,\dots,\tr_1$ with edges $e_l,\dots,e_1$ attached to $v$ such that $e_l,\dots,e_1,e$ is clockwisely oriented. There are two cases. 
	
	The first case is when $v$ is labeled with $1$ and we have $l=2$. In this case we have the estimate 
	$$
	\|\mathfrak{m}^{(e_2,v)}_{\vec{\chi},\vec{\varkappa}} \wedge \mathfrak{m}^{(e_1,v)}_{\vec{\chi},\vec{\varkappa}} \|_{C^j(\simplex_{\tr_{e_2,v}} \times \simplex_{\tr_{e_1,v}} \times U_v)} \leq C_{j,\epsilon, \vec{\chi},\vec{\varkappa}} \exp\left(-\lam \big(\hat{\tdist}_{\tr_2}(\vec{\mathbf{t}}_{\tr_{2}} , \hat{x}_{\tr_2}) + \hat{\tdist}_{\tr_1} (\vec{\mathbf{t}}_{\tr_{1}} , \hat{x}_{\tr_1}) - b_{\tr_{e,v}}\epsilon \big) \right)
	$$ 
	by choosing $b_{\tr_{e,v}} \geq b_{\tr_1} + b_{\tr_2}$, where we require $x_{\tr_1,v} = x_{\tr_2,v} = x_{v}$ in the R.H.S. of the above equation. Assuming that $e$ is numbered by $ij$, and we apply the Lemma \ref{lem:resolventlemma} to the term $\mathfrak{m}^{(e,\tilde{v})}_{\vec{\chi},\vec{\varkappa}}=  d^*_{ij} G_{ij} \big(\chi_{v} \mathfrak{m}^{(e_2,v)}_{\vec{\chi},\vec{\varkappa}} \wedge \mathfrak{m}^{(e_1,v)}_{\vec{\chi},\vec{\varkappa}} \big)$ (we choose smaller $r$ if necessary) we obtain the estimate
	$$
	\|  d^*_{ij} G_{ij} \big(\chi_{v} \mathfrak{m}^{(e_2,v)}_{\vec{\chi},\vec{\varkappa}} \wedge \mathfrak{m}^{(e_1,v)}_{\vec{\chi},\vec{\varkappa}} \big) \|_{C^j (\simplex_{\tr_{e,\tilde{v}}} \times U_{\tilde{v}})} \leq C_{j,\epsilon, \vec{\chi},\vec{\varkappa}} \exp \left( - \lam \big( \hat{\tdist}_{\tr_{e,\tilde{v}}} (\vec{\mathbf{t}}_{\tr_{e,\tilde{v}}},\hat{x}_{\tr_{e,\tilde{v}}}) - b_{\tr_{e,\tilde{v}}}\epsilon \big) 
	\right),
	$$
	by taking $b_{\tr_{e,\tilde{v}}} \geq b_{\tr_{e,v}} + 1$ which is the desired estimate.
	
	The second case is when $v$ is labeled with $u$, and we have the estimate
	\begin{multline*}
	\|   \actionmap^*_{t_{l}} \big( \iota_{w_{v,l}\wedge  \nu_{\tr_l}^\vee} \mathfrak{m}^{(e_l,v)}_{\vec{\chi},\vec{\varkappa}}  \big) \wedge \cdots \wedge \actionmap^*_{t_1} \big( \iota_{w_{v,1}\wedge \nu_{\tr_1}^\vee} \mathfrak{m}^{(e_1,v)}_{\vec{\chi},\vec{\varkappa}}  \big) \|_{C^j(\prod_{j=1}^l \simplex_{\tr_{j}} \times \mathbf{C}_v\times U_v)} \\ \leq C_{j,\epsilon, \vec{\chi},\vec{\varkappa}} \exp \left( -\lam \big( \sum_{j=1}^l \hat{\tdist}_{\tr_{j}} (\vec{\mathbf{t}}_{\tr_j}, \hat{x}_{\tr_{j}})  - b_{\tr_{e,v}} \epsilon \big) \right),
	\end{multline*}
	using the induction hypothesis and by taking $b_{\tr_{e,v}} \geq l+  \sum_{j=1}^l b_{\tr_j} $, for $(t_l,\dots,t_1)$ varying in  small enough neighborhood $\mathbf{C}_v$ of $(t_{v,l},\dots,t_{v,1})$ ($\mathbf{C}_v$ introduced in the paragraph before Definition \ref{def:cut_off_operations}), where we require that the identity $x_{\tr_j,v} =  \actionmap_{t_{v,j}}(x_{v})$ on the R.H.S. as in the Definition \ref{def:distance_function_of_a_tree}. By applying $d^{*}_{ij} G_{ij}$ (if $e$ is numbered by $ij$) to the term $\mathfrak{m}^{(e,v)}_{\vec{\chi},\vec{\varkappa}} = \nu_{\tr_{e,v}} \chi_v \varkappa_v \actionmap^*_{t_{l}} \big( \iota_{w_{v,l} \wedge \nu_{\tr_l}^\vee} \mathfrak{m}^{(e_l,v)}_{\vec{\chi},\vec{\varkappa}}  \big) \wedge \cdots \wedge \actionmap^*_{t_1} \big( \iota_{w_{v,1} \wedge \nu_{\tr_1}^\vee} \mathfrak{m}^{(e_1,v)}_{\vec{\chi},\vec{\varkappa}} \big)$ as in Definition \ref{def:cut_off_operations}, and using Lemma \ref{lem:resolventlemma} again we have the desired estimate
	$$
	\| \mathfrak{m}^{(e,\tilde{v})}_{\vec{\chi},\vec{\varkappa}} \|_{C^j(\simplex_{\tr_{e,\tilde{v}}} \times U_{\tilde{v}})} \leq C_{j,\epsilon, \vec{\chi},\vec{\varkappa}} \exp \left( -\lam \big( \hat{\tdist}_{\tr_{e,\tilde{v}}}(\vec{\mathbf{t}}_{\tr_{e,\tilde{v}}}, \hat{x}_{\tr_{e,\tilde{v}}}) - b_{\tr_{e,\tilde{v}}} \epsilon \big)\right),
	$$
	where we take $b_{\tr_{e,\tilde{v}}} \geq b_{\tr_{e,v}}+1$. 
	
	To obtain the statement of the Lemma, we observe that if $\tr_l ,\cdots,\tr_1$ are the incoming trees joining to the root vertex we have
	$$
	\| \mathfrak{m}^{(e_o,v_o)}_{\vec{\chi},\vec{\varkappa}} \|_{C^j(\simplex_{\tr} \times U_{v_r})} \leq C_{j,\epsilon,\vec{\chi},\vec{\varkappa}} \exp \left( -\lam \big(\sum_{j=1}^l \hat{\tdist}_{\tr_j}(\vec{\mathbf{t}}_{\tr_j},\hat{x}_{\tr_j}) - b_{\tr_{e_o,v_o}} \epsilon \big)  \right)
	$$
	in a small enough neighborhood $U_{v_r}$ of $x_{v_r}$, where we have $l=2$ and $x_{\tr_2,v_r} = x_{\tr_1,v_r} = x_{v_r}$ in R.H.S. as in the first case with $v_r$ labeled with $1$, and $x_{\tr_j,v_r} = \actionmap_{t_{v_r,j}}(x_{v_r})$ in R.H.S. as in the second case that $v_r$ is labeled with $u$. The Lemma follows from the estimate for $\mathfrak{m}^{(e_o,v_o)}_{\vec{\chi},\vec{\varkappa}}$ and that for $\frac{\ast e^{-2\lam f_{0k}}\phi_{0k}}{\| e^{-\lam f_{0k}} \phi_{0k}\|^2}$ in Remark \ref{rem:eigenwkbremark}. 
	\end{proof}

The above Lemma allows us to estimate the terms $\mathfrak{m}^{\tr}_{\vec{\chi},\vec{\varkappa}}$ appearing in the R.H.S., and from the discussion after Example \ref{eg:example_on_distance_function} we notice that it is closely related to gradient flow tree of type $\tr$. With the gradient flow trees $\ftree_i$'s as in Notation \ref{not:gradient_flow_tree_notation}, we assume there are open neighborhoods $D_{\ftree_i,v}$ and $W_{\ftree_i,v}$ of $x_{\ftree_i,v}$ for $v \in \tr^{[0]} $ such that $\overline{D_{\ftree_i,v}} \subset W_{\ftree_i,v}$ together with $\chi_{\ftree_i,v} \equiv 1$ on $\overline{D_{\ftree_i,v}}$ which is compactly supported in $W_{\ftree_i,v}$ giving $\vec{\chi}_{\ftree_i} = (\chi_{\ftree_i,v})_{v \in \tr^{[0]}}$. Similarly, we also assume there are open neighborhoods $\mathbf{C}_{\ftree_i,v}$ and $\mathbf{E}_{\ftree_i,v}$ of $\mathbf{t}_{\ftree_i,v}$ in $\simplex_{\val(v)}$ satisfying $\overline{\mathbf{C}_{\ftree_i,v}} \subset \mathbf{E}_{\ftree_i,v}$ together with $\varkappa_{\ftree_i,v} \equiv 1$ on $\overline{\mathbf{C}_{\ftree_i,v}}$ which is compactly supported in $\mathbf{E}_{\ftree_i,v}$ giving $\vec{\varkappa}_{\ftree_i} = (\varkappa_{\ftree_i,v})_{v \in \lvertex}$. We should further prescribe the size of these neighborhood $W_{\ftree_i,v}$'s and $\mathbf{E}_{\ftree_i,v}$ in the upcoming Section \ref{sec:wkb_approximation_method} which is defined along the gradient tree $\ftree_i$'s together with the WKB approximation \footnote{Roughly speaking, these are the open subsets that WKB approximation for $\mathfrak{m}^{(e,v)}_{\vec{\chi},\vec{\varkappa}}$ can be constructed. These open subsets does not depend on $\mathfrak{m}^{(e,v)}_{\vec{\chi},\vec{\varkappa}}$ but rather depend on the geometry of gradient flow tree $\ftree_i$'s when applying Lemma \ref{lem:eigenwkb} and Lemmma \ref{lem:homotopywkb} along $\ftree_i$'s.}. By writing $\overline{\vec{D}_{\ftree_i}} =  \prod_{v \in \tr^{[0]}} \overline{D_{\ftree_i,v}}$ and $\overline{\vec{\mathbf{C}}_{\ftree_i}} = \prod_{v \in \lvertex} \overline{\mathbf{C}_{\ftree_i,v}}$, we have $\tdist_\tr \geq c>0$ for some constant $c$ outside $\bigcup_{i=1}^d \overline{\vec{\mathbf{C}}_{\ftree_i}} \times \overline{\vec{D}_{\ftree_i}}$ by continuity of $\tdist_{\tr}$ and the discussion after Example \ref{eg:example_on_distance_function}. As a result, we can fix a small enough $\epsilon$ (and the associated $r(\epsilon)$) such that $b_{\tr} \epsilon < c/2$. The following Figure \ref{fig:tree_open_neighborhood} show the situation for these open subsets $W_{\Gamma_i,v}$'s and $\mathbf{E}_{\Gamma_i,v}$'s for the tree in Example \ref{eg:example_on_distance_function}.

\begin{figure}[h]
\centering
\includegraphics[scale = 0.6]{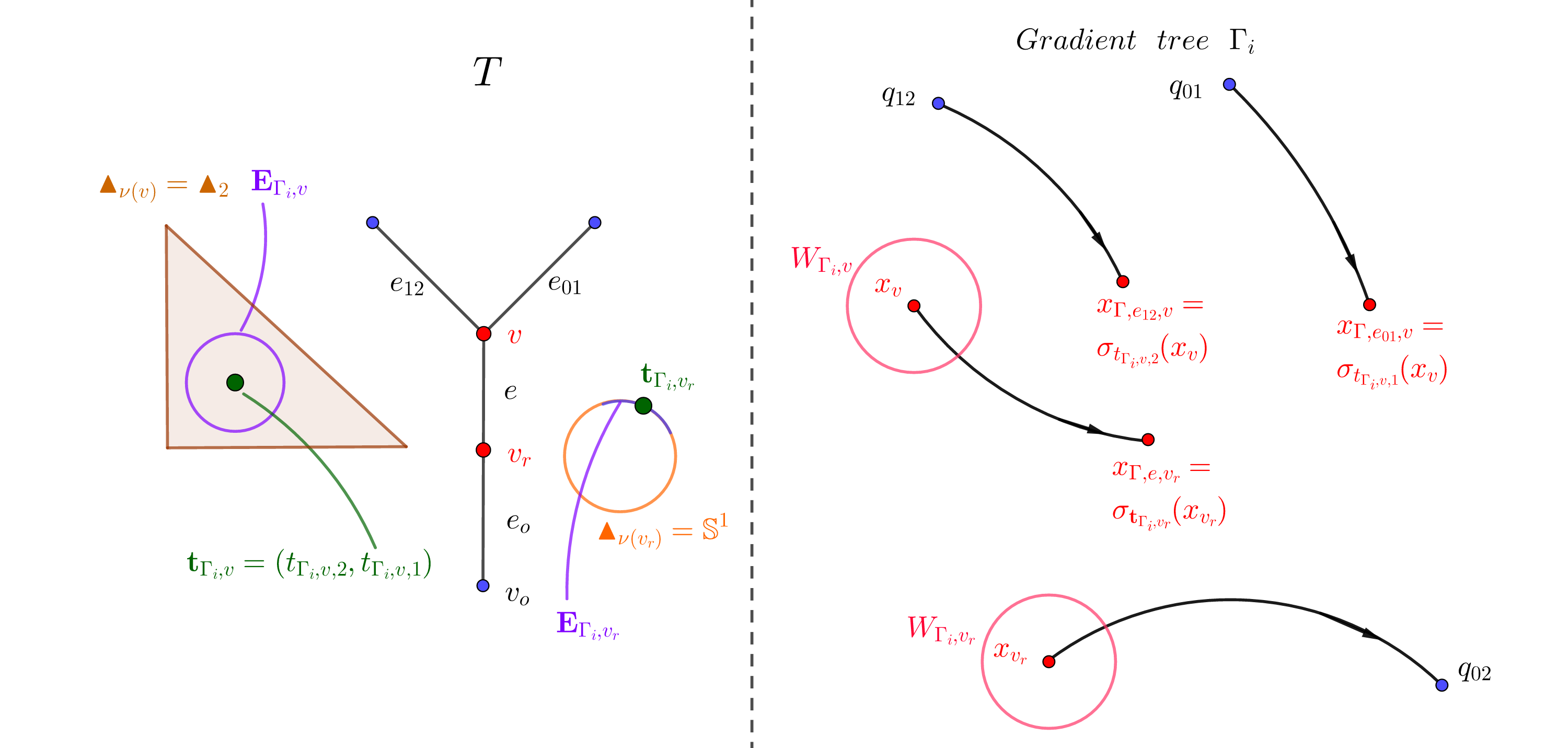}
\caption{Open subsets near gradient tree $\Gamma_i$}\label{fig:tree_open_neighborhood}
\end{figure}

We can take a finite collection $\{\vec{\chi}_{\mathbf{i}}\}_{\mathbf{i} \in \mathscr{I}}$ and $\{\vec{\varkappa}_{\mathbf{j}} \}_{\mathbf{j} \in \mathscr{J}}$ in the paragraph before Lemma \ref{lem:apriori_lemma} such that $\{\vec{\chi}_{\mathbf{i}}\}_{\mathbf{i} \in \mathscr{I}} \cup \{\vec{\chi}_{\ftree_1},\dots,\vec{\chi}_{\ftree_d} \}$ forms a partition of unity of $M^{|\tr^{[0]}|}$ and finite collection $\{\vec{\varkappa}_{\mathbf{j}} \}_{\mathbf{j} \in \mathscr{J}} \cup \{\vec{\varkappa}_{\ftree_1},\dots, \vec{\varkappa}_{\ftree_d} \}$ forms a partition of unity of $\simplex_{\tr}$ respectively, further satisfying $\big( \text{Supp}(\vec{\chi}_{\mathbf{i}}) \times \text{Supp}(\vec{\varkappa}_{\mathbf{j}}) \big) \cap \overline{\vec{\mathbf{C}}_{\ftree_i}} \times \overline{\vec{D}_{\ftree_i}} = \emptyset$ for each flow tree $\ftree_i$ and any $\mathbf{i},\mathbf{j}$. Therefore we have the estimate $\| \mathfrak{m}^{\tr}_{\vec{\chi}_{\mathbf{i}},\vec{\varkappa}_{\mathbf{j}}} \wedge \frac{\ast e^{-2\lam f_{0k}}\phi_{0k}}{\| e^{-\lam f_{0k}} \phi_{0k}\|^2} \|_{C^0(\simplex_{\tr} \times M)} \leq C_{\epsilon,\vec{\chi}_{\mathbf{i}},\vec{\varkappa}_{\mathbf{j}}} e^{-\lam c/2}$. As a conclusion of this Section \ref{sec:apriori_estimate}, we have 
\begin{equation}\label{eqn:apriori_cut_off_near_flow_tree}
\int_{M} m_{k,\tr}(\lam) (\phi_{(k-1)k},\dots,\phi_{01}) \wedge \frac{\ast e^{-2\lam f_{0k}}\phi_{0k}}{\| e^{-\lam f_{0k}} \phi_{0k}\|^2} = \sum_{i=1}^d \int_{\simplex_{\tr} \times M} \mathfrak{m}^{\tr}_{\vec{\chi}_{\ftree_i},\vec{\varkappa}_{\ftree_i}} \wedge \frac{\ast e^{-2\lam f_{0k}}\phi_{0k}}{\| e^{-\lam f_{0k}} \phi_{0k}\|^2} + \mathcal{O}(e^{-\lam c/2}),
\end{equation}
where $\mathcal{O}(e^{-\lam c/2})$ refers to function in $\lam$ bounded by $C e^{-\lam c/2}$ for some $C$. This cut off the contribution to integral near the gradient flow trees $\ftree_i$'s. 

\subsection{WKB approximation method}\label{sec:wkb_approximation_method}
\subsubsection{WKB expansion for $\mathfrak{m}^{(e,v)}_{\vec{\chi},\vec{\varkappa}}$} 
We fix a particular gradient flow tree $\ftree = \ftree_i$ (we omit $i$ in our notations for the rest of this paper) and compute the contribution from the integral $\int_{\simplex_{\tr} \times M} \mathfrak{m}^{\tr}_{\vec{\chi}_{\ftree},\vec{\varkappa}_{\ftree}} \wedge \frac{e^{-2\lam f_{ij}}\ast \phi_{0k}}{\| e^{-\lam f_{0k}} \phi_{0k}\|^2}$ in the above equation \ref{eqn:apriori_cut_off_near_flow_tree} using techniques from \cite[Section 3]{klchan-leung-ma}. 

We inductively define the open subset $W_{e,v} \subset M$ and $\mathbf{E}_{v}$ of $\mathbf{t}_v$ along the tree $\ftree$, together with a WKB expansion of $\mathfrak{m}^{(e,v)}_{\vec{\chi},\vec{\varkappa}}$ in $\vec{\mathbf{E}}_{\tr_{e,v}} \times W_{e,v} = \prod_{v \in \lvertex_{e,v}} \mathbf{E}_{v} \times W_{e,v}$ \footnote{Here $\tr_{e,v}$ is the combinatorial subtree of $\tr$ as in Notation \ref{not:morse_function_associated_to_edges}.} for each flag $(e,v)$ of $\tr$ 
\begin{equation}
\mathfrak{m}^{(e,v)}_{\vec{\chi},\vec{\varkappa}} \sim \lam^{r_{e,v}} e^{-\lam \g_{e,v}} \big(\omega_{(e,v),0} + \omega_{(e,v),1}\lam^{-\half} + \cdots \big),
\end{equation}
which is a norm estimate (here we fix arbitrary metric on $\simplex_{l}$ as before) in the sense of Lemma \ref{lem:homotopywkb}, where $g_{e,v} \in \mathcal{C}^{\infty}(\vec{\mathbf{E}}_{\tr_{e,v}} \times W_{e,v})$ is non-negative Bott-Morse function with zero set $V_{e,v} \subset \vec{\mathbf{E}}_{\tr_{e,v}} \times W_{e,v}$ and $\omega_{(e,v),i} \in \Omega^*(\vec{\mathbf{E}}_{\tr_{e,v}} \times W_{e,v})$ as follows: 
\begin{enumerate}
	\item for the incoming edges $e_{ij}$ with $\partial_{o}(e_{ij}) = v$, we define $W_{e_{ij},v}$ to be a open subset of $x_{\ftree,e_{ij},v}$ (We use the notation as in Notation \ref{not:gradient_flow_tree_notation}) together with the WKB expansion for $\phi_{ij}$ in $W_{e_{ij},v}$ from Lemma \ref{lem:eigenwkb}, with $r_{e_{ij},v} = \frac{\deg(q_{ij})}{2}$ and $\g_{e_{ij},v} = \g^+_{q_{ij}}$. In this case we have $V_{e_{ij},v} = V_{q_{ij}}^+ \cap W_{e_{ij},v}$ being the stable submanifold;
	
	\item for $(e,v)$ with $\partial_{in}(e) = v$ with $v$ is labeled with $1$, we let $\tr_2,\tr_1$ to be subtrees with outgoing edges $e_2, e_1$ ending at $v$ such that $e_2,e_1,e$ clockwisely oriented, we let $\vec{\mathbf{E}}_{\tr_{e,v}} =  \vec{\mathbf{E}}_{\tr_2} \times \vec{\mathbf{E}}_{\tr_1}$ and $W_{e,v} = W_{e_2,v} \cap W_{e_1, v}$, with the product WKB expansion as
	$$
	(-1)^{\varepsilon} \chi_{v} \mathfrak{m}^{(e_2,v)}_{\vec{\chi},\vec{\varkappa}} \wedge \mathfrak{m}^{(e_1,v)}_{\vec{\chi},\vec{\varkappa}} \sim \lam^{r_{e,v}} e^{-\lam \g_{e,v}} \big(\omega_{(e,v),0} + \omega_{(e,v),1}\lam^{-\half} + \cdots \big)
	$$
	by taking $\lam^{r_{e,v}} = \lam^{r_{e_2,v} + r_{e_1,v}}$, $\g_{e,v} = \g_{e_2,v} + \g_{e_1,v}$ and $\omega_{(e,v),l} = \sum_{i+j = l} \chi_{v} \omega_{(e_2,v),i} \wedge \omega_{(e_1,v),j}$ (Here $\varepsilon$ is given (2) in Definition \ref{def:cut_off_operations}).  In this case we have $\g_{e,v}$ being a non-negative Bott-Morse function in $\vec{\mathbf{E}}_{\tr_{e,v}}\times W_{e,v}$ with zero set $V_{e,v} = (V_{e_2,v} \times \vec{\mathbf{E}}_{\tr_1})\cap (V_{e_1,v} \times \vec{\mathbf{E}}_{\tr_1})$;
	
	\item when we have $v$ labeled with $u$, we let $\tr_l,\dots,\tr_1$ be subtrees with outgoing edges $e_l,\dots,e_1$ ending at $v$ with $e_l,\dots,e_1,e$ clockwisely oriented, we let $\vec{\mathbf{E}}_{\tr_{e,v}} = \prod_{j=1}^l \vec{\mathbf{E}}_{\tr_j} \times \mathbf{C}_v$ and take $W_{e,v}$ (Here $\mathbf{C}_v$ is neighborhood of $\mathbf{t}_{\ftree,v}$, and $W_{e,v}$ is a neighborhood of $x_{\ftree,v} = x_{\ftree,e,v}$) such that $\actionmap_{t_{j}} (W_{e,v}) \subset W_{e_j,v}$ for each $j=1,\dots,l$ for $(t_l,\dots,t_1) \in \mathbf{C}_v$. Therefore we have the WKB expansion $\mathfrak{m}^{(e,v)}_{\vec{\chi},\vec{\varkappa}} \sim \lam^{r_{e,v}} e^{-\lam \g_{e,v}} \big(\omega_{(e,v),0} + \omega_{(e,v),1}\lam^{-\half} + \cdots \big)$ by taking $r_{e,v} = \sum_{j=1}^l r_{e_j,v}$, $g_{e,v} = \sum_{j=1}^l \tau_j^* (g_{e_j,v})$ and 
	$$ \omega_{(e,v),m} = \sum_{i_l + \cdots + i_1 = m} \nu_{\tr_{e,v}}\chi_{v} \varkappa_{v} \big( \iota_{ \dd{t_{v,l}} \wedge \nu_{\tr_l}^\vee } \tau_l^* (\omega_{(e_l,v),i_l}) \big) \wedge \cdots \wedge \big( \iota_{\dd{t_{v,1}} \wedge \nu_{\tr_1}^\vee } \tau_1^*(\omega_{(e_1,v), i_1}) \big),$$ 
	where $\tau_j : \prod_{j=1}^l \vec{\mathbf{E}}_{\tr_j} \times \simplex_{\val(v)} \times W_{e,v} \rightarrow \vec{\mathbf{E}}_{\tr_j} \times W_{e_j,v}$ is induced by taking product of the projection $\prod_{j=1}^l \vec{\mathbf{E}}_{\tr_j} \rightarrow \vec{\mathbf{E}}_{\tr_j}$ with $\tau_{j} : \simplex_{\val(v)} \times W_{e,v} \rightarrow W_{e_j,v}$ (here we abuse the notation) given by $\tau_{j}(t_{v,l}, \cdots ,t_{v,1} , x) = \actionmap_{t_{v,j}}(x)$. In this case we have $V_{e,v} = \bigcap_{j=1}^l \tau_j^{-1}(V_{e_j,v})$;
	
	\item for an edge $e$ numbered by $ij$ with $\partial_{in}(e) = v_0$ and $\partial_{o}(e) = v_1$ with $v_1$ not being the outgoing vertex $v_o$, we apply the Lemma \ref{lem:homotopywkb} by taking $\zeta_S = \mathfrak{m}^{(e,v_0)}_{\vec{\chi},\vec{\varkappa}}$ (and shrinking $W_{e,v_0}$ if necessary) together with its WKB approximation, therefore we obtain the WKB approximation for $\zeta_E = \mathfrak{m}^{(e,v_1)}_{\vec{\chi},\vec{\varkappa}}$ in a neighborhood $\vec{\mathbf{E}}_{\tr_{e,v_1}} \times W_{e,v_1}$ for some small neighborhood $W_{e,v_1}$ of $x_{\ftree,e, v_1}$. In this case we have $V_{e,v_1} = \bigcup_{t \in \real} \varsigma_{t}(V_{e,v_0}) \cap \big( \vec{\mathbf{E}}_{\tr_{e,v_1}} \times W_{e,v_1} \big)$ where $\varsigma_{t}$ here is $t$-time flow of $\nabla f_{ij}/ |\nabla f_{ij}|^2$ extended to $\vec{\mathbf{E}}_{\tr_{e,v_1}} \times (M \setminus \text{Crit}(f_{ij}))$ by taking product with $\vec{\mathbf{E}}_{\tr_{e,v_1}}$;
	\item for the outgoing edge $e_o$ with outgoing vertex $v_o$, we simply take the WKB expansion of $\mathfrak{m}^{(e_o,v_o)}_{\vec{\chi},\vec{\varkappa}}$ to be that of $\mathfrak{m}^{(e_o,v_r)}_{\vec{\chi},\vec{\varkappa}}$. In this case we have $V_{e_o,v_o} = V_{e_o,v_r}$. 
	\end{enumerate}

Having the WKB approximation of $\mathfrak{m}^{(e_o,v_o)}_{\vec{\chi},\vec{\varkappa}}$, together with that for $$\frac{\ast e^{-2\lam f_{0k}} \phi_{0k}}{\| e^{-\lam f_{0k}} \phi_{0k}\|^2} \sim \frac{\lam^{\deg(q_{0k})/2}}{\|  e^{-\lam f_{0k}} \phi_{0k}\|^2} e^{-\lam \g^-_{0k}} \big(\ast \omega_{0k,0} + \ast \omega_{0k,1} \lam^{-\half} + \cdots \big)$$
from Lemma \ref{lem:eigenwkb} (here we abbreviated $\g_{q_{0k}}^-$ and $\omega_{q_{0k},i}$'s by $\g_{0k}^-$ and $\omega_{0k,i}$'s respectively), we obtain
\begin{equation}\label{eqn:wkb_for_computing_integral}
\int_{\simplex_{\tr} \times M} \mathfrak{m}^{\tr}_{\vec{\chi}_{\ftree},\vec{\varkappa}_{\ftree}} \wedge \frac{\ast e^{-2\lam f_{0k}}\phi_{0k}}{\| e^{-\lam f_{0k}} \phi_{0k}\|^2} = \frac{\lam^{r_{e_o,v_o} + \deg(q_{0k})/2}}{\| e^{-\lam f_{0k}} \phi_{0k}\|^2} \int_{\simplex_\tr \times M} e^{-\lam (\g_{e_o,v_o} + \g_{0k}^-)} \omega_{(e_o,v_o),0} \wedge \ast \omega_{0k,0} + \mathcal{O}(\lam^{-\half}).
\end{equation}

\subsubsection{Explicit computation of the integral}\label{sec:explicit_contribution}
From the generic assumption of $\vec{f}$ in Definition \ref{def:moduli_space_as_intersection}, we notice that all the points $\mathbf{t}_{\ftree,v} \in \text{int}(\simplex_{\val(v)})$. In the above WKB construction, by shrinking $\mathbf{E}_v$'s and $W_{e,v}$'s if necessary, we may always assume that $\pi_{e,v} : \vec{\mathbf{E}}_{\tr_{e,v}} \times W_{e,v}  \rightarrow V_{e,v}$ being identified with a neighborhood of zero section in the normal bundle $NV_{e,v}$ in $\vec{\mathbf{E}}_{\tr_{e,v}} \times W_{e,v}$. We notice that the element $\nu_{\tr_{e,v}} \wedge \vol_{e,v}$ (Here $\vol_{e,v}$ is introduced in Definition \ref{def:sign_of_morse_counting} as element in $\bigwedge^* T^*M_{x_{\ftree,e,v}}$) is a top degree element in $\bigwedge^* NV_{e,v}^*$, serves as an orientation in the normal direction (by extending to whole $V_{e,v}$).


We show inductively along gradient tree $\ftree$ that the integration along fiber $$(\pi_{e,v})_* \big(\lam^{r_{e,v}} e^{-\lam \g_{e,v}}\omega_{(e,v),0} \big) = 1 + \mathcal{O}(\lam^{-\half})$$ at the point $(\vec{\mathbf{t}}_{\ftree_{e,v}}, x_{\ftree,e,v})$ (here $x_{\ftree,e,v}$ is introduced in Notation \ref{not:gradient_flow_tree_notation}) in $V_{e,v}$ (Here $(\pi_{e,v})_*$ refers integration along fibers of $\pi_{e,v}$ with respect to orientation $\nu_{\tr_{e,v}} \wedge \vol_{e,v}$) using techniques from \cite[Section 3]{klchan-leung-ma}. Since $\g_{e,v}$ is non-negative Bott-Morse function with zero set $V_{e,v}$, using the well known stationary phase expansion (see e.g. \cite{dimassi1999spectral} or \cite[Lemma 58]{klchan-leung-ma}) we notice the leading order in $\lam^{-\half}$ in above integral only depend on the values of $\omega_{(e,v),0}$ at $(\vec{\mathbf{t}}_{\ftree_{e,v}}, x_{\ftree,e,v})$, and can be computed inductively as follows (we use the same notations as in the inductive WKB construction in earlier Section \ref{sec:wkb_approximation_method}):
	\begin{enumerate}
		\item for the incoming edges $e_{ij}$ with $\partial_{o}(e_{ij}) = v$, this is exactly Lemma \ref{lem:eigenwkbcal};
		
		\item for $(e,v)$ with $\partial_{in}(e) = v$ with $v$ is labeled with $1$, with subtree $\tr_2 ,\tr_1$ and outgoing edges $e_2,e_1$ ending at $v$, we have $V_{e,v}  = (V_{e_2,v} \times \vec{\mathbf{E}}_{\tr_1})\cap (V_{e_1,v} \times \vec{\mathbf{E}}_{\tr_1})$ and we can compute 
		$$
		(\pi_{e,v})_* (\lam^{r_{e,v}} e^{-\lam \g_{e,v}} \omega_{(e,v),0})= (-1)^{\varepsilon}  (\pi_{e_2,v})_* (\lam^{r_{e_2,v}} e^{-\lam \g_{e_2,v}} \omega_{(e_2,v),0}) (\pi_{e_1,v})_* (\lam^{r_{e_1,v}} e^{-\lam \g_{e_1,v}} \omega_{(e_1,v),0}) =1
		$$
		at the point $(\vec{\mathbf{t}}_{\ftree_{e,v}}, x_{\ftree,e,v})$ in $V_{e,v}$ modulo error $\mathcal{O}(\lam^{-\half})$ ($\varepsilon$ as in (2) Definition \ref{def:cut_off_operations}); 
		
		
		\item when we have $v$ labeled with $u$, we let $\tr_l,\dots,\tr_1$ be subtrees with outgoing edges $e_l,\dots,e_1$ ending at $v$ with $e_l,\dots,e_1,e$ clockwisely oriented, we notice that $V_{e,v} = \bigcap_{j=1}^l \tau_j^{-1} (V_{e_j,v})$ from WKB construction in previous Section \ref{sec:wkb_approximation_method}. From the induction, we can compute the integral $(\pi_{e_j,v})_* \big(\lam^{r_{e_j,v}} e^{-\lam \tau_j^*(\g_{e_j,v}) } \tau_j^* (\omega_{(e_j,v),0}) \big) = 1 + \mathcal{O}(\lam^{-1})$ as function on $\tau_j^{-1}((\vec{\mathbf{t}}_{\ftree_{e_j,v}},x_{\ftree,e_j,v}))$ if we identify a neighborhood $\tau_j^{-1}(\vec{\mathbf{E}}_{\tr_{j}} \times W_{e_j,v})$ of $\tau_j^{-1} (V_{e_j,v})$ with a neighborhood of zero section in the pull back normal bundle $\tau^{-1}_j (NV_{e_j,v})$ as treat $\pi_{e_j,v} : \tau^{-1}_j (NV_{e_j,v}) \rightarrow \tau_j^{-1} (V_{e_j,v})$ as integration along fibers. We obtain the identity
		$$
		(\pi_{e,v})_* (\lam^{r_{e,v}} e^{-\lam \g_{e,v}} \omega_{(e,v),0}) = \prod_{j=1}^l (\pi_{e_j,v})_* \big(\lam^{r_{e_j,v}} e^{-\lam \tau_j^*(\g_{e_j,v}) } \tau_j^* (\omega_{(e_j,v),0}) \big) =1,
		$$
		at $(\vec{\mathbf{t}}_{\ftree_{e,v}}, x_{\ftree,e,v})$ modulo error $\mathcal{O}(\lam^{-\half})$;
		
		\item for an edge $e$ numbered by $ij$ with $\partial_{in}(e) = v_0$ and $\partial_{o}(e) = v_1$ with $v_1$ not being the outgoing vertex $v_o$, we can compute $(\pi_{e,v_1})_* (\lam^{r_{e,v_1}} e^{-\lam \g_{e,v_1}} \omega_{(e,v_1),0}) = 1 + \mathcal{O}(\lam^{-\half})$ at the point $(\vec{\mathbf{t}}_{\ftree_{e,v_1}}, x_{\ftree,e,v_1})$ using the fact that $(\pi_{e,v_0})_* (\lam^{r_{e,v_0}} e^{-\lam \g_{e,v_0}} \omega_{(e,v_0),0}) = 1 + \mathcal{O}(\lam^{-\half})$ at the point $(\vec{\mathbf{t}}_{\ftree_{e,v_0}}, x_{\ftree,e,v_0})$ by applying Lemma \ref{lem:wkb_integration_relation} with $x_S =x_{\ftree,e,v_0}$ an $x_E = x_{\ftree,e,v_1}$ (notice that $\vec{\mathbf{t}}_{\ftree_{e,v_0}} = \vec{\mathbf{t}}_{\ftree_{e,v_1}}$);
		\item for the outgoing edge $e_o$ with outgoing vertex $v_o$, since we have $V_{e_o,v_o}$ and $\vec{\mathbf{E}}_{\tr} \times V_{0k}^-$ intersecting transversally at $(\vec{\mathbf{t}}_{\ftree},x_{\ftree,e_o,x_r})$, we can compute
		\begin{align*}
		&\frac{\lam^{r_{e_o,v_o} + \deg(q_{0k})/2}}{\| e^{-\lam f_{0k}} \phi_{0k}\|^2} \int_{\simplex_\tr \times M} e^{-\lam (\g_{e_o,v_o} + \g_{0k}^-)} \omega_{(e_o,v_o),0} \wedge \ast \omega_{0k,0}  \\
		=&\pm (\pi_{e_o,v_o})_* (\lam^{r_{e_o,v_o}} e^{-\lam \g_{e_o,v_o}} \omega_{(e_o,v_o),0}) \big( \frac{\lam^{\frac{\deg(q_{0k})}{2}}}{\| e^{-\lam f_{0k}} \phi_{0k}\|^2} \int_{NV^-_{x_{\ftree,e_o,x_r}}}  e^{-\lam \g_{0k}^-}  \ast \omega_{0k,0}\big) + \mathcal{O}(\lam^{-\half})\\
		=& \pm 1 + \mathcal{O}(\lam^{-\half})
		\end{align*}
		where the $\pm$ sign depending on whether the sign of gradient flow tree $\ftree$ obtained by comparing $\vol_{e_{o},v_r} \wedge \ast \vol_{q_{0k}}$ with $\vol_{M}$ as described in Definition \ref{def:sign_of_morse_counting}.
	\end{enumerate}

As a conclusion, we have proven that 
$$
\int_{M} m_{k,\tr}(\lam) (\phi_{(k-1)k},\dots,\phi_{01}) \wedge \frac{\ast e^{-2\lam f_{0k}}\phi_{0k}}{\| e^{-\lam f_{0k}} \phi_{0k}\|^2} = \sum_{i=1}^d (-1)^{\chi(\ftree_i)} + \mathcal{O}(\lam^{-\half})
$$
and hence Theorem \ref{thm:main_theorem}.

\bibliographystyle{amsplain}
\bibliography{geometry}

\end{document}